\theoremstyle{plain}
\newcommand{\fsl}[1]{{\slashed{#1}}} % For Dirac op, spinors disctributions
  \DeclareMathOperator{\dd}{\boldsymbol d}
\DeclareMathOperator{\cdd}{\boldsymbol \delta}
\newtheorem{theorem}{Theorem}[section]
\newtheorem{theorem*}[theorem]{Theorem }
\newtheorem{proposition}[theorem]{Proposition}
\newtheorem{lemma}[theorem]{Lemma}
\newtheorem{remark}[theorem]{Remark}
 \newtheorem{example}[theorem]{Example}
 \DeclareMathOperator{\Ind}{Ind}
\DeclareSymbolFontAlphabet{\mathbb}{AMSb}
\DeclareSymbolFontAlphabet{\mathbbl}{bbold}
\DeclareMathOperator{\id}{id}
\DeclareMathOperator{\Id}{Id}
\DeclareMathOperator{\Str}{Str}
\DeclareMathOperator{\Co}{Co}
\DeclareMathOperator{\GL}{GL}
\DeclareMathOperator{\res}{res}
 \DeclareMathOperator{\Rel}{Re}
\numberwithin{equation}{section}
\begin{document}

\title{The source operator method: an overview}
\author{Salem Ben Sa\"id }
\address{ Department of Mathematical Sciences, College of Science, United Arab Emirates University, Al Ain Abu Dhabi, UAE.}
\email{salem.bensaid@uaeu.ac.ae}
\author{Jean-Louis Clerc }
\address{Université de Lorraine, CNRS, IECL, F-54000 Nancy, France}
\email{jean-louis.clerc.Koufany@univ-lorraine.fr}
\author{Khalid Koufany}
%\address{Institut Elie Cartan, Universit\'e de Lorraine}
\address{Université de Lorraine, CNRS, IECL, F-54000 Nancy, France}
\email{khalid.koufany@univ-lorraine.fr}

 \dedicatory{To Toshiyuki Kobayashi for his commitment to the mathematical community }
  \keywords{Keywords :  Symmetry breaking differential operators, Rankin-Cohen brackets, Source operator method, Simple real Jordan algebras, Zeta functional equations} 
  \subjclass[2010]{43A85; 22E46; 58J99; 58J70; 11S40}
\maketitle

 \begin{abstract}  
This is an overview on the {source operator method} which leads to the construction of   symmetry breaking differential operators  (SBDO) in the context of   tensor product of two principals series representations for the conformal group of a simple real Jordan algebra. This method can be applied to other geometric contexts: in the construction of SBDO for  differential forms and  for  spinors,  and also for the construction of Juhl's  operators corresponding to the restriction from the sphere $S^n$ to $S^{n-1}$.
 
   \end{abstract}

   \tableofcontents

   \section{Introduction}

T. Kobayashi's article \cite{kob} entitled
\smallskip

\centerline{\sl  A program for branching problems in the representation theory of reductive group}
\smallskip
\noindent 
 is concerned with the following general  problem : for a real reductive group $G$ and a reductive subgroup $G'$, 
how does an irreducible representation $\pi$ of $G$ behaves when restricted to $G'$ ? His program consists in three stages:
\begin{itemize}
\item  Stage A. Abstract features of the restriction $\pi_{\vert G'}.$
\item Stage B. Branching laws.
\item  Stage C. Construction of symmetry breaking operators.
\end{itemize}

Given a representation $\rho$ of $G'$, a \emph{symmetry breaking operator} $D$ is an operator from the representation space of $\pi$ into the representation space of $\rho$ which satisfies the intertwining property, valid for any $g'\in G'$
\begin{equation}D\circ \pi(g')= \rho(g') \circ D.
\end{equation}

For the history of the subject, examples of symmetry breaking operators came earlier  than Kobayashi's program, and they  gave inspiration for the first  two stages. An early example is the \emph{Rankin-Cohen brackets}  \cite{co}. In this case, the group $G$ is the product ${\rm SL}(2,\mathbb R)\times {\rm SL}(2,\mathbb R)$ and $G'$ is the diagonal subgroup of $G$ naturally identified with ${\rm SL}(2,\mathbb R)$. Let $\mathcal O(\Pi)$ be the space of holomorphic functions on the upper half-pane  $ \Pi = \{ z=x+iy: y>0\}$, and for any $k\in \mathbb N$, let $\pi_k$ be the representation of $SL(2,\mathbb R)$ on $\mathcal O(\Pi)$ given as follows : for $g\in {\rm SL}(2,\mathbb R)$ such that $g^{-1} = \begin{pmatrix} a&b\\c&d\end{pmatrix}$  and $F\in \mathcal O(\Pi)$, set 
\begin{equation}
\pi_k(g) F(z) = (cz+d)^{-k} F\left(\frac{az+b}{cz+d}\right).
\end{equation}
This formula defines a representation $\pi_k$ of ${\rm SL}(2,\mathbb R)$ on $\mathcal O(\Pi)$.  Let $\ell,m$ be two nonnegative integers. The tensor product (in fact an appropriate completion) $\rho_\ell\otimes \rho_m$ is a representation of $G$, which has a natural realization on $\mathcal O(\Pi\times \Pi)$. Let further $k\in \mathbb N$. Consider the following constant coefficients bi-differential operator
\[B_{\ell, m}^{(k)} : \mathcal O(\Pi\times \Pi) \longrightarrow \mathcal O(\Pi)
\]
defined by
\begin{equation}\label{RC-intro}
B_{\ell,m}^{(k)} F(z) = \sum_{r+s=k} (-1)^r \left(\begin{matrix} \ell+k-1\\s \end{matrix}\right)\left(\begin{matrix} m+k-1\\r \end{matrix}\right)\left(\frac{\partial}{\partial z} \right)^r \left(\frac{\partial}{\partial w} \right)^s F (z,w)_{\left| z=w\right.}.
\end{equation}
Then, for any $g\in {\rm SL}(2,\mathbb R)$, we have
\begin{equation} 
B_{\ell,m}^{(k)}\circ \left(\rho_\ell(g)\otimes \rho_m(g) \right)= \rho_{\ell+m+2k}(g) \circ B_{\ell,m}^{(k)}\ ,
\end{equation}
or otherwise stated, $B_{\ell,m}^{(k)}$ is a symmetry breaking operator for the  pair $(\rho_\ell\otimes \rho_m, \rho_{\ell+m+2k})$. See \cite{c23} for more information and proofs on this case.

This article presents a method, called the \emph{source operator method}   to construct such symmetry breaking differential operators (SBDO).  It turned out to be sucessfull in many situations. 

We concentrate on one example (first exposed in \cite{bck1}), which covers a large example of geometric situations and for which a rather explicit formula (see \cite{Clerc-Rod}) for the symmetry breaking operators is now known. In Section 2  a short presentation of the geometric situation was given.  An overview of the construction of the source operator is given in Section 4. This comes  after a functional equation which has been established in Section 3, which  is an important result on its own. The corresponding SBDO are constructed in Section 5. Finally, Section 6 collects other examples where the source operator method is used.

It is not clear at the moment to which cases the source operator method may be applied, both in terms of geometric situations and in terms of representations. Notice first that we were only interested in cases where the source operator is a \emph{differential} operator, but the first step of the construction (i.e. the use of a multiplication operator $M$ and the Knapp-Stein operators) is quite general. It would be interesting to describe all cases where this construction actually leads to a differential operator. In the case of a tensor product representation of the conformal group ${\rm O}(1,n+1)$, the answer is positive for the case of differential forms and also for the spinor case, but the question remains  open for other vector-valued representations. Another question is to characterize the source operator among all possible $G'$ covariant operators. In the tensor product case, it is not difficult to see that there are many ${\rm diag}(G\times G)$-covariant differential operators between $\mathcal H_\lambda\otimes \mathcal H_\mu$ and $\mathcal H_{\lambda+1}\otimes \mathcal H_{\mu+1}$, and our guess is that the one we have constructed is the one which has the lowest possible degree.

Finally, let us mention two connections of the source operator method with other questions. The source operator corresponding to the (Euclidean) conformal case already appeared in \cite {bc} in connection with the study of {singular invariant trilinear forms} on the sphere $S=S^n$. It is connected to a \emph{Bernstein-Sato identity} for the generic kernel of the trilinear form viewed as a distribution on $S\times S\times S$. In a quite different direction, operators similar to the source operator   appeared in \cite{c17}, in connection with conformal analysis on hypersurfaces of manifolds, under the name of \emph{shift operators} (see \cite{jo}) or \emph{degenerate Laplacians} (see \cite{gw}).

%\section {Construction of  the Rankin-Cohen operators for the upper half-plane}

\section{Background: Real simple Jordan algebra}
Let $V$ be a  real Jordan algebra of dimension $n$, with a bilinear multiplication $V\times V\rightarrow V, (x,y)\mapsto x\cdot y,$ and a unit element $\bf 1.$ %Let $r$ and $n$ denote respectively the rank and the dimension of the Jordan algebra $V$. 
To every $x\in V$, we associate a generic minimal polynomial  (see \cite[Section II.2]{FK}) %$f_x$ is given by
$$
f_x(\lambda)=\lambda^r-a_1(x) \lambda^{r-1}+\cdots+(-1)^r a_r(x) .
$$
Its degree $r$ is called the so-called rank of the Jordan algebra $V.$  For $1 \leq j \leq r$, the coefficient $a_j(x)$ are homogeneous polynomials
 of degree $j$ and invariant under automorphisms of $V$. 
In particular, the {Jordan trace} $\operatorname{tr}(x):=a_1(x)$ and the {Jordan determinant} ${\bf det}(x):=a_r(x)$   are invariant under automorphisms of $V$.  

The symmetric bilinear form
\begin{equation} \label{1}  \tau(x, y):=\operatorname{tr}(x y),\qquad x, y \in V\end{equation}
is called the  trace form of $V$. 
The  real Jordan algebra $V$ is said to be {semi-simple} if $\tau$ is non-degenerate, and 
$V$ is called  Euclidean if  $\tau$ is positive definite. Moreover, a  semi-simple Jordan algebra with no nontrivial ideal is called simple.  
From now on we will 	assume that $V$ is a simple real Jordan algebra.
 
An involutive automorphism $\alpha$ of $V$ such that
\begin{equation}\label{ds}
(x \mid y):=\tau(x, \alpha y)
\end{equation}
is positive definite, is called {Cartan involution} of $V$. 
For a semi-simple Jordan algebra, such a Cartan involution always exists and two Cartan involutions are conjugate by an automorphism of $V$ 
(see \cite[Satz 4.1, Satz 5.2]{Helwig}). We have the decomposition
$$
V=V^{+} \oplus V^{-}
$$
into $\pm 1$ eigenspaces of $V$. One can  see that
$$
\begin{aligned}
& V^{\pm} \cdot V^{\pm} \subseteq V^{+}, \\
& V^{+} \cdot V^{-} \subseteq V^{-} .
\end{aligned}
$$
Hence, the $+1$ eigenspace $V^{+}$ is an euclidean Jordan subalgebra of $V$ with the same identity element $\bf 1$. 
Note that if $V$ itself is already euclidean, then $\alpha=\mathrm{id}_V$ is a Cartan involution, and indeed    the only Cartan involution. In this case, $V^{+}=V$ and $V^{-}=0$.

We denote by $n_+$ and $r_+$ dimension and rank of $V^{+}$and call $r_+$ the {\it split rank} of $V$. The constants $n_+$ and $r_+$ only depend on the isomophism class of the Jordan algebra $V$, 
not on the choice of the involution  $\alpha$.

An element $c\in V$ is said to be  {\it idempotent} if $c\cdot c=c$. Two idempotents $c_1$ and $c_2$ are called {\it orthogonal} if $c_1\cdot c_2=0$.  A non-zero idempotent  is called {\it primitive} if it cannot be written as the  sum of two non-zero orthogonal  idempotents.    

Every  set $\{c_1,\ldots,c_k\}$ of orthogonal primitive idempotents in $V_+$ with the additional condition $c_1+ \cdots+c_k={\bf 1}$ is called a {\it Jordan frame} in $V_+.$ By \cite[Theorem III.1.2]{FK}  
the cardinal   of a  Jordan frame equals  to the rank $r_+$ of $V_+$ and every two Jordan frames are conjugate by an automorphism of $V_+$ (see \cite[Corollary IV.2.7]{FK}).

Fix a Jordan frame $\{c_1,\ldots,c_{r_+}\}$ in $V_+$.  By \cite[Proposition III.1.3]{FK} the spectrum of the multiplication operator $L(c_k)$ by $c_k$  is $\{0,\frac{1}{2},1\}$. Further, the operators  $L(c_1), \ldots, L(c_{r_+})$ commute and therefore are simultaneously diagonalizable. This gives  the following  Peirce decomposition 
\begin{equation*}
V=\bigoplus_{1\leq i\leq j\leq r_+} V_{ij},
\end{equation*}
where
$$\begin{array}{rll}
V_{ii}&:=V(c_i,1), & \quad1\leq i\leq r_+,\\
V_{ij}&:=V(c_i,\frac{1}{2})\cap V(c_j,\frac{1}{2}),& \quad 1\leq i<j\leq r_+.
\end{array}$$
Here  $V(c,\lambda)$ denotes the eigenspace of $L(c)$ corresponding to the eigenvalue $\lambda.$
Since the operators $L(c_k)$, for $0\leq k\leq r_+$, are symmetric with respect to the inner product  \eqref{ds}, then  the above  direct sum  is orthogonal.

Denote by $d$ the common dimension of the subspaces $V_{ij}$ ($i<j$) 
and by $e+1$ the common dimension of the subalgebras $V_{ii}.$   Then, the dimension $n$ of $V$ satisfies  
$$ n=r_+(e+1)+ {d\over 2}{r_+(r_+-1)} .$$

%The integers $n$, $r$, $d$ and $e$ are called the {\em structure constants of $V$}. They characterize the Jordan algebra $V$.

 The Jordan algebra $V$ is called {\it split}  if $V_{ii}=\mathbb{R}c_i$ for every $1\leq i\leq r_+$ (equivalently $e=0$). Otherwise, $V$ is called {\it non-split}. By \cite{Helwig}, if $V$ is split  then $r=r_+$, otherwise $r=2r_+.$ We pin down   that every euclidean Jordan algebra is split.      
  There is  a classification of simple real Jordan algebra  given in  \cite{Helwig}  and \cite{Loos}. We refer the reader to \cite[Appendix A]{bck1}  for the complete  list.  More precisely, we have four types of simple real Jordan algebras :
 
 \begin{tabular}{lll}
 Type I &:& $V$ is euclidean.\\ % and therefore is split ($r=r_+$) with no complex structure.\\
 Type II  &: & $V$ is split non-euclidean.\\ %split ($r=r_+$)  with no complex structure.\\
 Type III &:&  $V$ is  non-split  with no complex structure.\\
 Type IV &:&  $V$ is  non-split  with a complex structure.
 \end{tabular}
\\

 Notice that every simple real Jordan algebra   is either a real form of a simple complex Jordan algebra (type I, II, III),  or  
 a simple complex Jordan algebra viewed as a real one (type IV).
 
%When dealing  with a simple   Jordan algebra over $\mathbb{C}$ we use the symbol $\mathbb{V}$ and we  denote by  $\mathbbl{n}$, $\mathbbl{r}$ and  $\mathbbl{d}$   its structure constants (over $\mathbb C$).

%An involutive automorphism $\alpha$ of $V$ is called Euclidean if the bilinear form $\operatorname{Tr} L(\alpha(x) y)$ is positive definite on $V$. For a semi-simple Jordan algebra such a Euclidean involution always exists (see \cite[??]{Helwig}).

The {structure group} $\Str(V)$ of $V$ can be thought of  as the subgroup of transformations $g\in \GL(V)$   such that for every invertible $x\in V^\times$, 
 there exists a nonzero real number $\chi(g)$ for which
$$
{\bf det}(g x)=\chi(g) {\bf det}(x) .
$$
The map $g \mapsto \chi(g)$ is a character of $\Str(V)$ which is a reductive Lie group.

For $a\in V$, denote by $n_a$ the translation $x\mapsto x+a$. Let 
\[N:=\{ n_a:\ a\in V\}\]
 be the abelian Lie group of all translations.
The   {conformal group} $\Co(V)$ of $V$ is by definition  the group of rational transforms of $V$ generated by $\Str(V)$, $N$  
and the inversion   $\imath: x \mapsto-x^{-1}.$ It can be shown that $\Co(V)$ is a  simple Lie group (see \cite{ber1, ber2}). 
A transformation $g \in \Co(V)$ is conformal in the sense that, at each point $x$, where $g$ is well defined, 
its differential $(D g)_x$ belongs to the structure group $\Str(V)$. In our context, it turned out to be more convenient to work
 with a group $G,$ which is locally isomorphic to $\Co(V ),$  instead of $\Co(V )$ itself. More precisely, we will denote by $\Str(V)^+$ 
 the subgroup of $\Str(V)$ defined by \[\Str(V)^+= \{ \ell\in \Str(V):\quad \chi(\ell)>0\}.\]
 Define the {proper conformal group} $\Co(V)^+$ to be the group generated by $\Str(V)^+,$ $N$ and the inversion $\imath$.

For any $g\in \Co(V)^+$ and $x\in V_g,$ open dense subset of $V$ where $g$ is defined, the differential $Dg(x)\in \Str(V)^+,$ 
and therefore we may define $c(g,x)=\chi(Dg(x))^{-1}.$ In particular,
\begin{itemize}
\item[(i)] For any $\ell\in \Str(V)^+$ and $x\in V$
\[c(\ell, x) = \chi(\ell)^{-1}.
\]
\item[(ii)]  For any $v\in V$ and $x\in V$
\[c(n_v,x) = 1.
\]
\item[(iii)]  For any $x\in V^\times$,
\[c(\imath, x) = {\bf det}(x)^2.
\]
\end{itemize}

In \cite[Proposition 6.1]{bck1} we proved that there exists a polynomial $p_g\in \mathcal P(V)$ such that for every $x\in V_g,$ 
$c(g,x)=p_g(x)^2$ where $p_g$ is unique, up to a sign $\{\pm\}.$ This allows us to construct the twofold covering group 
\begin{equation}\label{til}  G = \Big\{(g, p_g) \in \Co(V)^+ \times  \mathcal{P}(V): p_g(x)^2 = c(g,x)\, \forall x \in V\Big\}.\end{equation}
  The group $G$ is a simple Lie
group which acts rationally on $V$. The  subgroup  $P$ of  affine transformations in $G$, which is a twofold covering  of $\Str(V)^+ \ltimes N$, is in fact a parabolic subgroup of $G$. 
 Let $\bar{P}$ be the opposite parabolic subgroup to $P$. Then the flag variety $\mathcal X:=G / \bar{P}$, which is compact, is the 
 conformal  compactification of $V$ and the map $V \simeq N \longrightarrow G / \bar{P}$ gives the embedding  of $V$ in its compactification. 

For a given $\widetilde g = (g, p_g)\in G$ and $x\in V_g$, we set $\widetilde g(x) := g(x)$. Further,  we will use the following notation 
  %for $x\in V$, we will denote
\begin{equation}\label{a}  
a(\widetilde g, x) := p_g(x),\quad x\in V.
\end{equation}
By  abuse of notation, we will denote elements of $G$ without tilde.

\section{Functional equation of Zeta functions}
Let $V$ be a simple real Jordan algebra with Jordan determinant ${\bf det}.$ We mention that if $V$ is of type I or II, then ${\bf det}$
 takes both positive and negative values, while if $V$ is of type III or IV, then ${\bf det}$ takes only  positive values. For $x\in V^\times$ and 
 $s\in \mathbb C,$  we introduce ${\bf det}(x)^{s, \varepsilon}$ by 
$$ {\bf det}(x)^{s, \varepsilon}=
\begin{cases}
 |{\bf det}(x)|^s & \text{for}  \;\varepsilon =+ \cr 
 {\rm sign}({\bf det}(x)) |{\bf det}(x)|^s & \text {for}\; \varepsilon =-.\cr 
 \end{cases}$$ 

 Let $\mathcal S(V)$ be the Schwartz space of $V$ and let $\mathcal S'(V)$ be  its dual, the space of tempered distributions  on $V.$  For $f\in \mathcal S(V)$ and $(s,\varepsilon)\in \mathbb C\times \{\pm\},$ we set 
\begin{equation}\label{eq1} Z_\varepsilon (f, s) =\int_V f(x) {\bf det}(x)^{s,\varepsilon} dx,\end{equation}
 where $dx$ is the Lebesgue measure on $V$ with respect to the non-degenerate bilinear form \eqref{1}.
The  integral in \eqref{eq1} is convergent for $\Rel s>0,$  the analytic function 
$Z_\varepsilon(f,s)$ has a meromorphic continuation with respect to $s$ to the whole plane $\mathbb C,$ and the map $f\mapsto Z_\varepsilon (f, s) $ is a tempered distribution on $V,$ called a Zeta distribution. For the details, we refer the reader to  \cite{bar, bop, kaya, bck1} when $\varepsilon =+$ and to \cite{bck1} when $\varepsilon =-.$

 The purpose of this section is to give an explicit expression for the Fourier transform of the Zeta distribution for the four types of simple real Jordan algebras. Below, the Fourier transform of $f\in \mathcal S(V)$ is defined by 
 \begin{equation*} 
\widehat{f}(x)=\int_V e^{2i\pi\langle x, y\rangle}f(y)dy,
\end{equation*} 
which we extend it by duality to  the space    of tempered distributions in the standard way. 

For $s\in \mathbb C,$ one defines the gamma function associated to $V$ by 
 \begin{equation*} 
   \Gamma_V(s):=\prod_{k=1}^{r_+}\Gamma\left(\frac{s}{2}-(k-1)\frac{d}{4}\right),
   \end{equation*}
where $r_+$ denotes the split rank of $V.$ Now we are ready to state the functional equation for the Zeta distribution when $V$ is of type II $\not \cong \mathbb R^{p,q},$   type III and type IV. 
\begin{theorem}
The tempered distribution $f\mapsto Z_\varepsilon(f,s)$ satisfies the following functional equation 
  \begin{equation*} 
  Z_\varepsilon(\widehat f, {s })=c(s,\varepsilon) Z_\varepsilon\big(f,-s-\frac{n}{r} \big),
  \end{equation*}
  where
$$
 c(s,\varepsilon)=
  \begin{cases}   
 \displaystyle  \pi^{-rs-\frac{n}{2}}\frac{\Gamma_V\left(s+\frac{n}{r}\right)}{\Gamma_V(-s)} & \text{for}  \;\varepsilon=+ \; {\text and} \, V  \,\text{of type II}\;  \not \cong \mathbb R^{p,q}  \\
  %\displaystyle   \frac{(-2\sqrt{-1})^r  \pi^{-rs-\frac{n}{2}}}{b_{r,d}(s+1)}  \frac{\Gamma_V\left(s+1+\frac{n}{r}\right)}{\Gamma_V(-s-1)}& \text{ if $\varepsilon=-$ and $V$ split }\\
    \displaystyle i^r\pi^{-rs-\frac{n}{2}}  \frac{\Gamma_V(s+1+\frac{n}{r})}{\Gamma_V(-s+1)}& \text{for}\;  \varepsilon=-  \; {\text and} \,V \,\text{of type II}\; \not \cong \mathbb R^{p,q}   \\ 
  \displaystyle   \pi^{-rs-\frac{n}{2}}\frac{\Gamma_V\left(2s+\frac{2n}{r}\right)}{\Gamma_V(-2s)} &\text{for}\;  \varepsilon=\pm   \; {\text and} \,V\,\text{of type III or IV}.\\
  \end{cases}
$$
\end{theorem}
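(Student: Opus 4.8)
The plan is to reduce the statement to the Euclidean (type I) case by a ``complexification/base change'' argument combined with the known functional equation there. Recall that for a Euclidean Jordan algebra $W$ of rank $r$, dimension $n$, and Peirce parameter $d$, the classical result of Gindikin (see \cite{FK}, and also \cite{bar, bop}) gives the functional equation for the Riesz-type Zeta distribution $\int_W f(x)\,\Delta(x)^s\,dx$, with constant expressed through $\Gamma_W(s)=\prod_{k=1}^{r}\Gamma\!\big(\tfrac{s}{2}-(k-1)\tfrac{d}{4}\big)$. The observation, going back to the structure theory recalled in Section 2, is that a simple real Jordan algebra $V$ of type II, III or IV is built from a Euclidean piece: for type II one has $V=W$ already Euclidean-like with $r=r_+$; for type III and IV the rank doubles, $r=2r_+$, and the Jordan determinant of $V$ restricts on $V^+$ essentially to a power of the Euclidean determinant of $V^+$. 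Thus one first expresses $Z_\varepsilon(f,s)$ in terms of an integral over $V^+$ (or over the symmetric cone of $V^+$), using the polar-type decomposition of $V$ adapted to the Cartan involution $\alpha$ and the Peirce decomposition with respect to a fixed Jordan frame $\{c_1,\dots,c_{r_+}\}$.

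The key steps, in order, are as follows. First I would set up the integration formula: decompose $V=V^+\oplus V^-$ and use the action of the structure group to write Lebesgue measure on $V$ in ``polar coordinates'' relative to the cone $\Omega^+\subset V^+$, so that $Z_\varepsilon(f,s)$ becomes a radial integral against $\mathbf{det}(x)^{s,\varepsilon}$, with the sign $\varepsilon$ only intervening in the type II case (types III, IV have $\mathbf{det}>0$, whence $\varepsilon=\pm$ coincide — this is already noted in the text and explains why only one line appears for those types). Second, I would compute the Fourier transform $\widehat f$ and track how the Gaussian-type test functions (which suffice by density) transform; concretely, plug $f(x)=e^{-\pi(x\mid x)}$ times a polynomial, for which both sides are explicit, and identify the constant $c(s,\varepsilon)$ by matching. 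Third, I would identify the resulting product of one-variable Gamma factors with $\Gamma_V$: for type II the shift is by $\tfrac{n}{r}$ and the argument is $s$, matching the first two cases (the extra $i^r$ and the shift by $1$ for $\varepsilon=-$ coming from the Fourier transform of $\mathrm{sign}$ on each of the $r$ one-dimensional ``radial'' factors, i.e. from $\widehat{\mathrm{sign}\,|t|^s}$ on $\mathbb R$); for types III and IV the doubling $r=2r_+$ forces the arguments $2s$ and shift $\tfrac{2n}{r}$, producing $\Gamma_V(2s+\tfrac{2n}{r})/\Gamma_V(-2s)$. The power $\pi^{-rs-n/2}$ is bookkeeping from the normalization of the Fourier transform with the $2\pi$ in the exponent together with the $r$ radial variables.

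I expect the main obstacle to be the precise integration-in-polar-coordinates formula on a \emph{non-Euclidean} simple real Jordan algebra, i.e. making rigorous the passage from $\int_V$ to an integral over the cone of $V^+$ with the correct Jacobian and the correct power of the Euclidean determinant — in the split non-Euclidean (type II $\not\cong\mathbb R^{p,q}$) case versus the non-split cases the relation between $\mathbf{det}_V$ and $\mathbf{det}_{V^+}$ differs, and the exclusion of $\mathbb R^{p,q}$ signals that the generic minimal polynomial degenerates there and must be handled separately. A secondary technical point is the meromorphic continuation: the integral converges only for $\Rel s>0$, so one should first establish the identity on Gaussian test functions where everything is an honest convergent integral (or holomorphic in a half-plane), and then invoke uniqueness of meromorphic continuation together with density of such functions in $\mathcal S(V)$ to extend $c(s,\varepsilon)$ to all $s\in\mathbb C$. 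Once the integration formula and the one-dimensional computation $\widehat{|t|^{s,\varepsilon}}$ are in hand, assembling the four cases is a routine product over the $r$ radial coordinates.
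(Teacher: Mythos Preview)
Your proposal is more of a sketch than a proof, and it diverges from the paper's argument in a way that leaves a real gap for the case $\varepsilon=-$, type II.

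The paper does not reduce $Z_\varepsilon$ to a Euclidean Zeta integral via a polar decomposition of $V$. For $\varepsilon=+$ it simply quotes the functional equation already established by Barchini--Sepanski--Zierau, with the dictionary $|\mathbf{det}|=\nabla$ (type II) or $\nabla^2$ (types III, IV). For types III and IV this covers $\varepsilon=-$ as well, since $\mathbf{det}>0$. The only new case is $\varepsilon=-$ for type II $\not\cong\mathbb R^{p,q}$, and there the paper's device is the Bernstein--Sato identity
\[
\mathbf{det}\!\left(\frac{\partial}{\partial x}\right)\mathbf{det}(x)^{\,s+1,+}\;=\;\prod_{k=0}^{r-1}\Bigl(s+1+k\tfrac{d}{2}\Bigr)\,\mathbf{det}(x)^{\,s,-}.
\]
Under the Fourier transform, $\mathbf{det}(\partial_x)$ becomes multiplication by $(2\pi i)^{r}\mathbf{det}(\xi)$; applying this to the already known functional equation for $Z_+$ at parameter $s+1$ immediately yields the functional equation for $Z_-$ at parameter $s$, and the extra $i^r$ together with the shift $s\mapsto s+1$ in $\Gamma_V$ drop out of this computation. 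No integration formula on $V$ beyond the $\varepsilon=+$ case is needed.

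The gap in your plan is the step where you account for the sign: you claim the factor $i^r$ and the $+1$ shift come from ``the Fourier transform of $\mathrm{sign}$ on each of the $r$ one-dimensional radial factors.'' But $\mathrm{sign}(\mathbf{det}(x))$ does not factor as a product of $r$ one-variable signs in any coordinate system adapted to the Peirce decomposition; the Jordan determinant on a type II algebra (e.g.\ $\mathrm{Sym}(r,\mathbb R)$ with $\mathbf{det}=\det$) is an irreducible polynomial, not a product of linear forms. So the ``assemble $r$ copies of $\widehat{\mathrm{sign}\,|t|^s}$'' heuristic does not make sense as written, and the polar-coordinate reduction you propose, even if made precise, will not separate $\mathrm{sign}(\mathbf{det})$ across the radial variables. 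You yourself flag the integration formula as the main obstacle; in fact it is not merely technical but structural for $\varepsilon=-$. The Bernstein--Sato route bypasses this entirely by producing $\mathbf{det}(x)^{s,-}$ as a constant-coefficient derivative of $\mathbf{det}(x)^{s+1,+}$, which is exactly what is compatible with the Fourier transform.
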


When $\varepsilon =+,$ the above theorem goes back to \cite[Theorem 4.4]{bar} (recall that if $V$ is of type  III or IV, then ${\bf det}(x)^{s,+}$ and ${\bf det}(x)^{s,-}$ coincide). In the notation of \cite{bar}, our $|{\bf det} |$  is equal to $\nabla$ when $V$ is of type II, and is equal to $\nabla^2$ when $V$ is of type III or IV.  For the case $\varepsilon =-$ and $V$ is of type II, the above functional equation was established by the three authors in \cite[Theorem 5.2]{bck1}, where we were able to link the Fourier transforms of    $Z_-$ and $Z_+ $ by means of the following Bernestein identity 
$$\det\left(\frac{\partial}{\partial x}\right) {\bf det}(x)^{s+1,+} = \prod_{k=0}^{r-1} \left(s+1+k\frac{d}{2}\right) {\bf det}(x)^{s, -}     $$
 (see \cite[Proposition 3.4]{bck1}).
 
To finish with real simple Jordan algebras of type II, let us consider the case $V=\mathbb R^{p,q},$ for $p\geq 2$ and $q\geq 1.$     
For $x=(x_1, \ldots, x_p, x_{p+1}, \ldots, x_n)\in V,$ with $n=p+q,$ the  determinant is given by ${\bf det}(x)=P(x):=x_1^2+\cdots+x_p^2-x_{p+1}^2-\cdots-x_n^2.$  Here it is customary to define the Fourier  transform of $f\in \mathcal S(\mathbb R^{p,q})$ as the integral of $f$ against the kernel $e^{i \langle x, y\rangle}.$ 

Introduce the polynomial functions $P_+(x)=P(x) \chi_{\{\small P>0\}}(x)$ and $P_-(x)=-P(x) \chi_{\{\small P<0\}}(x).$ Their Fourier transforms as tempered distributions are given in \cite[(2.8) and (2.9)]{GS}. Noticing that  ${\bf det}(x)^{s, +} =P_+(x)^s +P_-(x)^s$ and ${\bf det}(x)^{s, -}=P_+(x)^s-P_-(x)^s,$ one then  can easily deduce the statement belwo:   
 \begin{theorem}\label{E-F-Z-pq} Assume that $V=\mathbb R^{p,q}$ with $p\geq 2$ and $q\geq 1.$ 
  For every $s\in \mathbb C$, the following system of functional equations holds 
 \begin{equation*}
 \left[\begin{matrix} Z_+(\widehat f, s)\\ Z_-(\widehat f,  s) \end{matrix}\right]= 
 \gamma(s){\mathbf A}(s)
\left[ \begin{matrix}
 Z_+\big(f, -s-\frac{n}{2}\big) \\Z_-\big(f, -s-\frac{n}{2}\big)
 \end{matrix}\right]
 \end{equation*}
 where 
 $$\mathbf{A}(s)=
  \left[\begin{array}{lr}
%\begin{pmatrix}
 \displaystyle \cos\frac{(p-q)\pi}{4} \left(\sin \frac{n\pi}{4}-\sin(s+\frac{n}{4}) \pi  \right)
& \displaystyle 
\sin\frac{(p-q)\pi}{4} \left(\cos(s+\frac{n}{4}) \pi- \cos \frac{n\pi}{4}\right)\\
\displaystyle 
 \sin\frac{(p-q)\pi}{4} \left(\cos(s+\frac{n}{4}) \pi+ \cos \frac{n\pi}{4}\right)
& \displaystyle 
-\cos\frac{(p-q)\pi}{4} \left(\sin \frac{n\pi}{4}+\sin(s+\frac{n}{4}) \pi  
  \right)
%\end{pmatrix}.
\end{array}\right]
$$    and
\begin{equation*} \gamma(s) = 2^{2s+n}\pi^{\frac{n}{2}-1} \Gamma(s+1)\Gamma\big(s+\frac{n}{2}\big).\end{equation*}
 \end{theorem}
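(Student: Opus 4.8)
The plan is to deduce the matrix functional equation from the (known) Fourier transforms of the individual Gelfand--Shilov distributions $P_+^{\,s}$ and $P_-^{\,s}$ on $\mathbb R^{p,q}$, combined with the two linear identities ${\bf det}(x)^{s,+}=P_+(x)^s+P_-(x)^s$ and ${\bf det}(x)^{s,-}=P_+(x)^s-P_-(x)^s$ and with the self-adjointness of the Fourier transform for the pairing $\langle\cdot,\cdot\rangle$ between $\mathcal S(V)$ and $\mathcal S'(V)$. For $f\in\mathcal S(\mathbb R^{p,q})$ and $\varepsilon\in\{+,-\}$ one writes
\[
Z_\varepsilon(\widehat f,s)=\big\langle\widehat f,\ P_+^{\,s}+\varepsilon\,P_-^{\,s}\big\rangle=\big\langle f,\ \mathcal F[P_+^{\,s}]+\varepsilon\,\mathcal F[P_-^{\,s}]\big\rangle ,
\]
so it remains to substitute the expressions for $\mathcal F[P_\pm^{\,s}]$, to re-expand the result in the pair $\{P_+^{\,-s-n/2},\,P_-^{\,-s-n/2}\}$, and to convert back to $Z_\pm(f,-s-\tfrac n2)$ using $\langle f,P_\pm^{\,\mu}\rangle=\tfrac12\big(Z_+(f,\mu)\pm Z_-(f,\mu)\big)$; the whole thing is then a $2\times2$ linear change of basis.

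The inputs are the formulas \cite[(2.8) and (2.9)]{GS}: each of $\mathcal F[P_+^{\,s}]$ and $\mathcal F[P_-^{\,s}]$ is an explicit linear combination of $P_+^{\,-s-n/2}$ and $P_-^{\,-s-n/2}$, with coefficients that are products of the scalar $2^{2s+n}\pi^{n/2}\,\Gamma(s+\tfrac n2)\,\Gamma(-s)^{-1}$ and elementary trigonometric functions in which $p,q,s$ enter only through $\tfrac{(p-q)\pi}4$, $\tfrac{n\pi}4$ and $(s+\tfrac n4)\pi$. The reflection formula gives $\Gamma(s+\tfrac n2)\,\Gamma(-s)^{-1}=-\pi^{-1}\sin(\pi s)\,\Gamma(s+1)\Gamma(s+\tfrac n2)$, so that scalar equals $-\sin(\pi s)\,\gamma(s)$; the factor $-\sin(\pi s)$ is absorbed by the $\sin(\pi s)$ in the denominator that appears when $P_\pm^{\,s}$ is written through the boundary-value distributions $(P\pm i0)^{\,s}$ — the cleanest form of the \cite{GS} computation — after which $\gamma(s)$ factors out of all four coefficients. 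Expanding the remaining products of sines and cosines by $2\sin A\cos B=\sin(A+B)+\sin(A-B)$ and $2\cos A\cos B=\cos(A+B)+\cos(A-B)$ then collapses them into precisely the four entries of $\mathbf A(s)$. Since the \cite{GS} formulas are identities between meromorphic families of tempered distributions, substituting them yields an identity of meromorphic functions of $s$ directly; in particular the poles of the zeta distributions require no separate discussion.

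The step I expect to be the actual work is the bookkeeping in that last paragraph rather than any matter of principle: fixing compatible branches for the complex powers $P_\pm^{\,s}$ and for the powers of the dual quadratic form; deciding whether the dual form has signature $(p,q)$ or $(q,p)$, and hence the exact signs in the splitting $e^{\pm i\pi q/2}=e^{\pm i\pi n/4}\,e^{\mp i\pi(p-q)/4}$ that produces the $\cos\tfrac{(p-q)\pi}4$ and $\sin\tfrac{(p-q)\pi}4$ entries; checking that all factors of $i$ cancel so that $\mathbf A(s)$ is real as asserted; and confirming that the Fourier normalization used here, with kernel $e^{i\langle x,y\rangle}$ rather than $e^{2i\pi\langle x,y\rangle}$, is the one under which \cite{GS} is stated, so that no stray power of $2\pi$ survives in $\gamma(s)$. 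A useful consistency check: when $p-q\equiv 0\pmod 4$ one has $\sin\tfrac{(p-q)\pi}4=0$, the matrix $\mathbf A(s)$ becomes diagonal, and the system degenerates into two independent scalar functional equations for $Z_+$ and $Z_-$, in agreement with the type II case of the preceding theorem.
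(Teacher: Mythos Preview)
Your approach is exactly the one the paper uses: it introduces $P_\pm$, invokes the Gelfand--Shilov formulas \cite[(2.8), (2.9)]{GS} for their Fourier transforms, records the identities ${\bf det}(x)^{s,\pm}=P_+(x)^s\pm P_-(x)^s$, and then simply says ``one then can easily deduce the statement below.'' Your proposal supplies precisely the bookkeeping (reflection formula, trigonometric product expansions, Fourier normalization check) that the paper leaves implicit in that phrase.
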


 Next we turn our attention to the case where $V$ is an Euclidean Jordan algebra, i.e. $V$ is of type I. Our main result depends heavily on the paper \cite{FS} by Faraut and Satake, where the authors proved a system of functional equations for Zeta distributions of type $Z_+$ defined  on orbits  $\Omega_\ell$ consist of elements in $V^{\times}$ of signature $(r-\ell, \ell),$ where $r$ is the rank of $V.$ We may think of our result as a far reaching generalization of Faraut-Satake's result by considering the Zeta distributions $Z_+$ and $Z_-$ defined over the whole $V^\times.$
 
 The set of invertible elements  $V^\times$ in $V$ decomposes into the disjoint union of $r+1$ open $G_\circ$-orbits 
 $$V^\times=\bigcup_{\ell=0}^r \Omega_\ell,$$ where $\Omega_\ell$ is the set of elements of signature $(r-\ell, \ell).$ Here $G_\circ$ is the identity connected component of $G=\Str(V).$ In particular, $\Omega_0,$ the $G_\circ$-orbit of the unit element $\bf 1,$ is a self-dual homogeneous cone. We shall simply denote $\Omega_0$  by $\Omega.$ 
 
 For $f\in \mathcal S(V),$ in \cite{FS} the authors defined the following system of   Zeta integrals  
 $$Z_{\ell}(f,s)=\int_{\Omega_\ell} f(x)|{\bf det}(x)|^sdx,\qquad 1\leq \ell\leq r. $$
The above integral converges  for $\Rel s>0$ and has a meromorphic continuation to the whole plane  $\mathbb{C}$. Their main result is the following system of functional equations 
\begin{equation}\label{F-Eq-Eucl}
Z_\ell \big(\widehat{f},s-\frac{n}{r}\big)=(2\pi)^{-rs}{\rm e}^{i\pi {rs\over 2}}
 \Gamma_{\Omega}(s)\sum_{\kappa=0}^ru_{\ell, \kappa}(s)Z_{\kappa}(f,-s),
\end{equation}
where $u_{\ell,\kappa}(s)$ are polynomials in  ${\rm e}^{-i\pi s}$  of degree at most $r$ (see \cite[(23)]{FS}). Above, $\Gamma_{\Omega}$ is the so-called Gindikin gamma function, 
 $$ 
 \Gamma_{\Omega}(s)%& \int_\Omega e^{-\tr(x)}\Delta(x)^{s-\frac{n}{r}}dx\\
 = (2\pi)^{\frac{n-r}{2}}\prod_{j=1}^r\Gamma\left(s-(j-1)\frac{d}{2}\right).
 $$

For $x\in \Omega_\ell,$ one has ${\bf det}(x)=(-1)^\ell |{\bf det}(x)|.$ Thus, we may express the local Zeta integrals 
$Z_\varepsilon(f, s)$ defined by \eqref{eq1} as following 
\begin{equation}\label{Z+-Z-and Zj}
 \begin{array}{l @{\; =\;} l}
Z_{+}(f,s)&  \displaystyle \int_V f(x) |{\bf det}(x)|^sdx= \sum_{\ell=0}^r Z_\ell(f,s),\\
Z_{-}(f,s)&  \displaystyle \int_V f(x) \mathrm{sgn}({\bf det}(x))|{\bf det}(x)|^sdx= \sum_{\ell=0}^r (-1)^\ell  Z_\ell(f,s).
 \end{array}
 \end{equation}
 Using the system \eqref{F-Eq-Eucl}
 of functional equations, we obtain 
  \begin{equation}\label{eq-f-Z+Zj}
 Z_{ +}\big( \widehat{f}, s-\frac{n}{r} \big) =(2\pi)^{-rs}{\rm e}^{i\pi {rs\over 2}}
 \Gamma_{\Omega}(s) \sum_{\kappa=0}^r\left(\sum_{\ell=0}^r u_{\ell,\kappa}(s)\right) Z_\kappa(f,-s),
 \end{equation}
 and
 \begin{equation}\label{eq-f-Z-Zj}
 Z_{ -}\big(\widehat{f}, s-\frac{n}{r}\big)= (2\pi)^{-rs}{\rm e}^{i\pi {rs\over 2}}
 \Gamma_{\Omega}(s)\sum_{\kappa=0}^r\left(\sum_{\ell=0}^r (-1)^\ell u_{\ell,\kappa}(s)\right) Z_\kappa(f,-s).
 \end{equation}
Next we shall compute the  finite sums $\sum_{\ell }  u_{\ell,\kappa}(s)$ and $\sum_{\ell }  (-1)^\ell u_{\ell,\kappa}(s). $  In \cite{FS}, the authors proved that for any $y\in \mathbb R, $
 \begin{equation}\label{E-eq-f-FS}
\sum_{\ell=0}^r y^\ell u_{\ell, \kappa}(s)=\xi^{-(r-\kappa)}P_\kappa(\xi {\rm e}^{-i\pi s},y)P_{r-\kappa}(1,\xi {\rm e}^{-i\pi s} y), 
\end{equation} 
where $\xi:={\rm e}^{i{\pi\over 2} d(r+1)}$ and 
\begin{equation}\label{coef-mat-FS}  
P_\kappa(a,b)=\begin{cases}
(a+b)^\kappa & \text{if $d$ is even},\\
(a+b)^{ \lfloor  \frac{\kappa}{2} \rfloor }(b-a)^{\kappa-\lfloor  \frac{\kappa}{2} \rfloor} &\text{if $d$ is odd}\, .
\end{cases}
\end{equation} 

First, we consider the case where $d$ is even. We distinguish two cases: 
$$ \begin{array}{rl}
\text{ Case (a):} &d\equiv0\ (\mathrm{mod}\ 4) \text{ or } d\equiv2\ (\mathrm{mod}\ 4) \text{ and } r \text{ odd},\\ 
\text{Case (a'):}&  d\equiv 2\ (\mathrm{mod}\ 4) \text{ and }  r \text{ even}
\end{array}
$$
 Then one has 
 $$ 
\sum_{\ell=0}^r y^\ell u_{\ell,\kappa}(s)=\begin{cases}
(y+e^{-i\pi s})^{\kappa} (ye^{-i\pi s} +1)^{r-\kappa}  & \text{ in Case (a)}\\
(y-e^{-i\pi s})^{\kappa} (ye^{-i\pi s} -1)^{r-\kappa}  & \text{ in Case (a')}. \end{cases}
  $$
In Case (a), we deduce that 
$$\sum_{\ell=0}^r  u_{\ell,\kappa}(s) = 2^r e^{-i{\pi \over 2} rs } \cos^r\left(\frac{\pi s}{2}\right) ,$$
and 
$$\sum_{\ell=0}^r  (-1)^\ell u_{\ell,\kappa}(s) = (2 i)^r (-1)^\kappa e^{-i{\pi \over 2} rs }  \sin^r\left(\frac{\pi s}{2}\right) .$$
Hence, in Case (a), the functional equations  \eqref{eq-f-Z+Zj} and \eqref{eq-f-Z-Zj} become 
 \begin{equation} 
 Z_{ +}\big( \widehat{f}, s-\frac{n}{r} \big) =2^r (2\pi)^{-rs} 
 \Gamma_{\Omega}(s)   \cos^r\left(\frac{\pi s}{2}\right) Z_+(f,-s),
 \end{equation}
 and
 \begin{equation} 
 Z_{ -}\big(\widehat{f}, s-\frac{n}{r}\big)=(2i)^r (2\pi)^{-rs}  \Gamma_{\Omega}(s)  \sin^r\left(\frac{\pi s}{2}\right) 
  Z_-(f,-s).
 \end{equation}
Similarly, in Case (a') one obtains  
$$\sum_{\ell=0}^r  u_{\ell,\kappa}(s) = (-1)^\kappa (2i)^r       e^{-i{\pi \over 2} rs } \sin^r\left(\frac{\pi s}{2}\right) ,$$
and 
$$\sum_{\ell=0}^r  (-1)^\ell u_{\ell,\kappa}(s) =2^r   e^{-i{\pi \over 2} rs }  \cos^r\left(\frac{\pi s}{2}\right) ,$$
which yield to 
$$ 
 Z_{ +}\big( \widehat{f}, s-\frac{n}{r}\big)  =(2i)^r (2\pi)^{-rs}   \Gamma_{\Omega}(s)\sin^r\left(\frac{\pi s}{2}\right)Z_{-}(f,-s),
 $$
and  
$$ 
 Z_{ -}\big( \widehat{f}, s-\frac{n}{r}\big)=2^r(2\pi)^{-rs} \Gamma_{\Omega}(s)  \cos^r\left(\frac{\pi s}{2}\right)Z_{+}(f,-s).
 $$
Hence, we have the following statement:
\begin{theorem} Assume that $V$ is an euclidean Jordan algebra of type Case (a) or Case (a'). The Zeta integrals \eqref{eq1} satisfy the following system of functional equations: 
\begin{itemize} 
\item {\rm Case (a):}
$$
\left[\begin{matrix}
Z_{+}(\widehat f, s) \\
Z_{-}(\widehat f, s)
\end{matrix}\right]
= 2^r (2\pi)^{-r(s+{n\over r})} \Gamma_{\Omega}\Big(s+\frac{n}{r}\Big) \mathbf{A}(s) 
\left[
\begin{matrix}
Z_+ \big(f, -s-\frac{n}{r}\big) \\
Z_-\big(f, {-s-\frac{n}{r}}\big)
\end{matrix}\right],
$$
where
$$\mathbf{A}(s)=\left[\begin{matrix}
\displaystyle \cos^r\Big(\frac{\pi }{2}(s+\frac{n}{r})\Big) & 0\\
0&\displaystyle i^r \sin^r\Big(\frac{\pi }{2}(s+\frac{n}{r})\Big)
\end{matrix}\right].$$
\item {\rm Case (a'):}
$$
\left[\begin{matrix}
Z_{+}(\widehat f, s) \\
Z_{-}(\widehat f, s)
\end{matrix}\right]
= 2^r (2\pi)^{-r(s+{n\over r})} \Gamma_{\Omega}\Big(s+\frac{n}{r}\Big) \mathbf{A}(s) 
\left[
\begin{matrix}
Z_+ \big(f, -s-\frac{n}{r}\big) \\
Z_-\big(f, {-s-\frac{n}{r}}\big)
\end{matrix}\right],
$$
where
$$\mathbf{A}(s)=\left[\begin{matrix}
\displaystyle i^r \sin^r \Big(\frac{\pi }{2}(s+\frac{n}{r})\Big) & 0\\
0&\displaystyle \cos^r\Big(\frac{\pi }{2}(s+\frac{n}{r})\Big)
\end{matrix}\right].$$
\end{itemize}
\end{theorem}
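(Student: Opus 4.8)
The plan is to obtain the theorem by reorganizing the scalar functional equations established immediately above its statement, after the change of variable $s\mapsto s+\tfrac nr$. No new analytic input will be needed: the integrals \eqref{eq1} converge for $\Rel s>0$ and continue meromorphically, so it suffices to verify the identity on the common strip of convergence of all the integrals in sight and then invoke analytic continuation, as in \cite{FS}.

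First I would recall how \eqref{eq-f-Z+Zj}--\eqref{eq-f-Z-Zj} are produced: one sums the Faraut--Satake system \eqref{F-Eq-Eucl} over the orbit index $\ell$ with weights $1$, respectively $(-1)^\ell$, and uses $Z_+=\sum_\ell Z_\ell$, $Z_-=\sum_\ell(-1)^\ell Z_\ell$ from \eqref{Z+-Z-and Zj}; this leaves the two finite sums $\sum_\ell u_{\ell,\kappa}(s)$ and $\sum_\ell(-1)^\ell u_{\ell,\kappa}(s)$ to be evaluated. Next I would evaluate these by specializing the generating identity \eqref{E-eq-f-FS} at $y=1$ and $y=-1$; since $d$ is even the polynomials \eqref{coef-mat-FS} are pure powers, so each specialization factors through $(1\pm e^{-i\pi s})^r$, and writing $1\pm e^{-i\pi s}$ as $2e^{-i\pi s/2}\cos(\pi s/2)$, respectively $2e^{-i\pi s/2}\,i\sin(\pi s/2)$, gives the closed forms quoted in the text. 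Here the scalar $\xi=e^{i\frac{\pi}{2}d(r+1)}=i^{d(r+1)}$ equals $+1$ exactly in Case (a) and $-1$ exactly in Case (a'), which is the single place where the hypotheses on the residue of $d$ modulo $4$ and the parity of $r$ enter. Substituting back into \eqref{eq-f-Z+Zj}--\eqref{eq-f-Z-Zj}, the factor $e^{i\pi rs/2}$ cancels the $e^{-i\pi rs/2}$ just produced, and the outer sum $\sum_\kappa$ reassembles $Z_+$ (when the coefficient sum is $\kappa$-independent) or $Z_-$ (when it carries a factor $(-1)^\kappa$); this gives exactly the four scalar functional equations displayed before the theorem.

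Finally I would substitute $s\mapsto s+\tfrac nr$ in those four identities: the left-hand argument $s-\tfrac nr$ becomes $s$, the right-hand argument $-s$ becomes $-s-\tfrac nr$, the prefactor becomes $2^r(2\pi)^{-r(s+n/r)}\Gamma_{\Omega}\bigl(s+\tfrac nr\bigr)$ once the common $2^r$ is pulled out via $(2i)^r=2^r i^r$, and the trigonometric arguments become $\tfrac\pi2\bigl(s+\tfrac nr\bigr)$. Collecting the two resulting scalar identities into one matrix identity for the column vector with entries $Z_+$ and $Z_-$ yields the asserted form, the positions of the two nonzero entries of $\mathbf A(s)$ being dictated by which of $Z_+$, $Z_-$ appears on the right in each Case.

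The only points requiring genuine care will be the bookkeeping of powers of $i$ and of $-1$ through the specializations of \eqref{E-eq-f-FS} — in Case (a') one uses that $r$ is even in order to collapse $(-1)^{r-\kappa}$ to $(-1)^\kappa$ — and the elementary verification that $\xi\in\{\pm1\}$ splits precisely along the Case (a)/Case (a') dichotomy. Beyond this reorganization there is no substantive obstacle.
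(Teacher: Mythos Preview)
Your proposal is correct and follows essentially the same route as the paper: sum the Faraut--Satake system with weights $1$ and $(-1)^\ell$, specialize the generating identity \eqref{E-eq-f-FS} at $y=\pm1$ using that $P_\kappa(a,b)=(a+b)^\kappa$ for $d$ even, exploit the dichotomy $\xi=\pm1$ that separates Case~(a) from Case~(a'), and then shift $s\mapsto s+\tfrac nr$ and package the two scalar identities as a $2\times2$ matrix equation. Your explicit identification of $\xi=i^{d(r+1)}$ and of the role of ``$r$ even'' in collapsing $(-1)^{r-\kappa}$ to $(-1)^\kappa$ in Case~(a') are exactly the bookkeeping points that the paper's derivation relies on implicitly.
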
 
Next we consider the case where $d$ is odd. According to the classification theory of euclidean Jordan algebras, we have the following two possibilities: 
$$
 \begin{array}{rl}
    \text{Case (b):} &     d \text{ odd and }   r=2,\\
      \text{Case (c):}  & d=1  \text{ and arbitrary  } r .\\
 \end{array}
 $$

In Case (b), by \eqref{E-eq-f-FS} we have 
 $$ \sum_{\ell=0}^2 y^\ell u_{\ell,\kappa}(s)= \begin{cases} 
     1+e^{-2\pi i s}y^2 & \text{for } \kappa=0\\
 e^{-\pi i s}-i^{d}(1-e^{-2\pi i s})y + e^{-\pi i s}y^2 & \text{for } \kappa=1 \\
  e^{-2\pi i s}+y^2, & \text{for } \kappa=2
   \end{cases}$$
 %\end{eqnarray*} 
 for any  real number $y$.  Therefore, for $d\equiv 1\ (\mathrm{mod}\ 4)$,   we get 
  $$ \sum_{\ell=0}^2 (\pm 1)^\ell u_{\ell,\kappa}(s)= \begin{cases} 
2  e^{-\pi i s} \cos(\pi s) & \text{for } \kappa=0\\
2 e^{-\pi i s} \pm 2  e^{-\pi i s} \sin(\pi s) & \text{for } \kappa=1 \\
2  e^{-\pi i s} \cos(\pi s) & \text{for } \kappa=2.
   \end{cases}$$
   The functional equations \eqref{eq-f-Z+Zj} and \eqref{eq-f-Z-Zj} become
  $$ Z_{\pm}\big( \widehat f, s-{n\over 2}\big)= 2 (2\pi)^{-2s} 
 \Gamma_{\Omega}(s)  \Big\{     \cos(\pi s) \big( Z_0(f,-s) +Z_2(f,-s)\big)+ (1\pm  \sin(\pi s) ) Z_1(f,-s)\Big\}
 $$
   On the other hand, by \eqref{Z+-Z-and Zj}, we have 
    $ Z_0(f,-s)+Z_1(f,-s)+Z_2(f,-s)=Z_{ +}(f,-s)$ and $ Z_0(f,-s)-Z_1(f,-s)+Z_2(f,-s)=Z_{ -}(f,-s).$ That is 
   $$ Z_0(f,-s) +Z_2(f,-s) ={1\over 2} \big( Z_{ +}(f,-s) +Z_{ -}(f,-s) \big),$$
   and $$ Z_1(f,-s)= {1\over 2}\big( Z_{ +}(f,-s) - Z_{ -}(f,-s) \big).$$
 By putting  pieces together, we arrive at 
 $$ Z_{+}\big( \widehat f, s-{n\over 2}\big)= 2\sqrt 2  (2\pi)^{-2s} 
 \Gamma_{\Omega}(s) \sin \Big({{\pi s}\over 2} +{\pi\over 4}\Big)  \Big\{     \cos\Big({{\pi s}\over 2}\Big)   Z_+(f,-s)  - \sin\Big({{\pi s}\over 2}\Big)   Z_-(f,-s)\Big\}
 $$
 and 
 $$ Z_{-}\big( \widehat f, s-{n\over 2}\big)= 2\sqrt 2  (2\pi)^{-2s} 
 \Gamma_{\Omega}(s) \cos \Big({{\pi s}\over 2} +{\pi\over 4}\Big)  \Big\{     \cos\Big({{\pi s}\over 2}\Big)   Z_+(f,-s)  + \sin\Big({{\pi s}\over 2}\Big)   Z_-(f,-s)\Big\}.
 $$
 
Similarly, for $d\equiv 3\ (\mathrm{mod}\ 4)$  we have
 $$ \sum_{\ell=0}^2 (\pm 1)^\ell u_{\ell,\kappa}(s)= \begin{cases} 
2  e^{-\pi i s} \cos(\pi s) & \text{for } \kappa=0\\
2 e^{-\pi i s} \mp 2  e^{-\pi i s} \sin(\pi s) & \text{for } \kappa=1 \\
2  e^{-\pi i s} \cos(\pi s) & \text{for } \kappa=2,
   \end{cases}$$
   which lead to the following functional equations 
   $$ Z_{+}\big( \widehat f, s-{n\over 2}\big)= 2\sqrt 2  (2\pi)^{-2s} 
 \Gamma_{\Omega}(s) \cos \Big({{\pi s}\over 2} +{\pi\over 4}\Big)  \Big\{     \cos\Big({{\pi s}\over 2}\Big)   Z_+(f,-s)  + \sin\Big({{\pi s}\over 2}\Big)   Z_-(f,-s)\Big\}
 $$
 and 
 $$ Z_{-}\big( \widehat f, s-{n\over 2}\big)= 2\sqrt 2  (2\pi)^{-2s} 
 \Gamma_{\Omega}(s) \sin \Big({{\pi s}\over 2} +{\pi\over 4}\Big)  \Big\{     \cos\Big({{\pi s}\over 2}\Big)   Z_+(f,-s)  - \sin\Big({{\pi s}\over 2}\Big)   Z_-(f,-s)\Big\}.
 $$
   \begin{theorem} Assume that $V$ is an euclidean Jordan algebra of type Case (b). The Zeta integrals \eqref{eq1} satisfy the following system of functional equations: 
\begin{itemize} 
\item If $d\equiv 1\ (\mathrm{mod}\ 4)$, then
$$
\left[\begin{matrix}
Z_{+}(\widehat f, s) \\
Z_{-}(\widehat f, s)
\end{matrix}\right]
=2\sqrt 2  (2\pi)^{-2(s+{n\over 2})} \Gamma_{\Omega}\Big(s+\frac{n}{2}\Big) \mathbf{A}(s) 
\left[
\begin{matrix}
Z_+ \big(f, -s-\frac{n}{2}\big) \\
Z_-\big(f, {-s-\frac{n}{2}}\big)
\end{matrix}\right],
$$
where
$$\mathbf{A}(s)=\left[\begin{matrix}
\sin\Big(\frac{\pi }{2}(s+\frac{n+1}{2})\Big) \cos\Big(\frac{\pi }{2}(s+\frac{n}{2})\Big) & 
-\sin\Big(\frac{\pi }{2}(s+\frac{n+1}{2})\Big) \sin\Big(\frac{\pi }{2}(s+\frac{n}{2})\Big)\\
\cos\Big(\frac{\pi }{2}(s+\frac{n+1}{2})\Big) \cos\Big(\frac{\pi }{2}(s+\frac{n}{2})\Big)&
\cos\Big(\frac{\pi }{2}(s+\frac{n+1}{2})\Big) \sin\Big(\frac{\pi }{2}(s+\frac{n}{2})\Big)
\end{matrix}\right].$$
\item If $d\equiv 3\ (\mathrm{mod}\ 4)$, then 
$$
\left[\begin{matrix}
Z_{+}(\widehat f, s) \\
Z_{-}(\widehat f, s)
\end{matrix}\right]
= 2\sqrt 2  (2\pi)^{-2(s+{n\over 2})} \Gamma_{\Omega}\Big(s+\frac{n}{2}\Big) \mathbf{A}(s) 
\left[
\begin{matrix}
Z_+ \big(f, -s-\frac{n}{2}\big) \\
Z_-\big(f, {-s-\frac{n}{2}}\big)
\end{matrix}\right],
$$
where
$$\mathbf{A}(s)=\left[\begin{matrix}
\cos\Big(\frac{\pi }{2}(s+\frac{n+1}{2})\Big) \cos\Big(\frac{\pi }{2}(s+\frac{n}{2})\Big) & 
\cos\Big(\frac{\pi }{2}(s+\frac{n+1}{2})\Big) \sin\Big(\frac{\pi }{2}(s+\frac{n}{2})\Big)\\
\sin\Big(\frac{\pi }{2}(s+\frac{n+1}{2})\Big) \cos\Big(\frac{\pi }{2}(s+\frac{n}{2})\Big)&
-\sin\Big(\frac{\pi }{2}(s+\frac{n+1}{2})\Big) \sin\Big(\frac{\pi }{2}(s+\frac{n}{2})\Big)
\end{matrix}\right].$$
\end{itemize}
\end{theorem}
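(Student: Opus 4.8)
The plan is to read both matrix identities off the explicit functional equations for $Z_{\pm}\big(\widehat f, s-\tfrac n2\big)$ obtained just above, by performing the single substitution $s\mapsto s+\tfrac n2$ (recall that in Case (b) one has $r=2$, so $\tfrac nr=\tfrac n2$). Indeed, for $d\equiv 1\ (\mathrm{mod}\ 4)$ we have already established
\begin{equation*}
Z_{+}\big(\widehat f,\, s-\tfrac n2\big)=2\sqrt2\,(2\pi)^{-2s}\,\Gamma_{\Omega}(s)\,\sin\!\big(\tfrac{\pi s}{2}+\tfrac\pi4\big)\Big\{\cos\!\big(\tfrac{\pi s}{2}\big)\,Z_{+}(f,-s)-\sin\!\big(\tfrac{\pi s}{2}\big)\,Z_{-}(f,-s)\Big\},
\end{equation*}
together with the companion formula for $Z_{-}\big(\widehat f, s-\tfrac n2\big)$ obtained from it by replacing the outer $\sin\!\big(\tfrac{\pi s}{2}+\tfrac\pi4\big)$ by $\cos\!\big(\tfrac{\pi s}{2}+\tfrac\pi4\big)$ and the inner minus sign by a plus sign. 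Replacing $s$ by $s+\tfrac n2$ turns the left-hand sides into $Z_{\pm}(\widehat f, s)$, the right-hand sides into combinations of $Z_{\pm}\big(f,-s-\tfrac n2\big)$, and the common scalar prefactor into $2\sqrt2\,(2\pi)^{-2(s+\frac n2)}\,\Gamma_{\Omega}\big(s+\tfrac n2\big)$, which is precisely the scalar factored out in the claimed identity.

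It then remains only to simplify the trigonometric arguments after the shift. Under $s\mapsto s+\tfrac n2$ one has $\tfrac{\pi s}{2}\mapsto\tfrac\pi2\big(s+\tfrac n2\big)$, while $\tfrac{\pi s}{2}+\tfrac\pi4\mapsto\tfrac\pi2\big(s+\tfrac n2\big)+\tfrac\pi4=\tfrac\pi2\big(s+\tfrac{n+1}{2}\big)$. Inserting these into the two formulas above reads off the four entries of $\mathbf A(s)$ in the case $d\equiv 1\ (\mathrm{mod}\ 4)$. For $d\equiv 3\ (\mathrm{mod}\ 4)$ one argues in exactly the same way, now starting from the pair of functional equations established above in which $\cos\!\big(\tfrac{\pi s}{2}+\tfrac\pi4\big)$ governs $Z_{+}$ and $\sin\!\big(\tfrac{\pi s}{2}+\tfrac\pi4\big)$ governs $Z_{-}$ (equivalently, with the two rows interchanged relative to the first case); the same substitution produces the second matrix.

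The mathematical content lies entirely in the preparatory material preceding the statement — the evaluation of the Faraut--Satake coefficients $\sum_{\ell}(\pm1)^{\ell}u_{\ell,\kappa}(s)$ from \eqref{E-eq-f-FS} (using \eqref{coef-mat-FS} with $d$ odd), their insertion into \eqref{eq-f-Z+Zj}--\eqref{eq-f-Z-Zj}, the passage from $Z_0,Z_1,Z_2$ back to $Z_{\pm}$ via \eqref{Z+-Z-and Zj}, and a few half-angle and angle-addition identities such as $\cos x+\sin x=\sqrt2\,\sin\!\big(x+\tfrac\pi4\big)$. Granting all of that, the final step presents no real obstacle; the only thing that requires care is to keep the two shifted arguments distinct, namely $\tfrac\pi2\big(s+\tfrac n2\big)$ inside the inner sine and cosine versus $\tfrac\pi2\big(s+\tfrac{n+1}{2}\big)$ inside the outer one, and correspondingly not to mix up the $\tfrac n2$ appearing in the power of $2\pi$ and in $\Gamma_{\Omega}$ with this $\tfrac{n+1}{2}$.
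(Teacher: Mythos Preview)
Your proposal is correct and follows exactly the paper's own route: the paper derives the four scalar functional equations for $Z_{\pm}\big(\widehat f,\, s-\tfrac n2\big)$ immediately before the theorem, and the theorem is nothing more than their matrix repackaging after the shift $s\mapsto s+\tfrac n2$, with the trigonometric arguments rewritten as $\tfrac\pi2\big(s+\tfrac n2\big)$ and $\tfrac\pi2\big(s+\tfrac{n+1}{2}\big)$. Your identification of where the actual work lies (the evaluation of $\sum_\ell(\pm1)^\ell u_{\ell,\kappa}(s)$ from \eqref{E-eq-f-FS}--\eqref{coef-mat-FS} and the conversion back to $Z_\pm$ via \eqref{Z+-Z-and Zj}) is accurate.
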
 

We close this section by the above mentioned Case (c), that is $d=1$ and $r$ arbitrary. In this case one needs to distinguish the four cases $r\equiv 0, 1, 2$ and $3 \ (\text{mod } 4).$ We shall present briefly the case $r\equiv 0\ (\text{mod } 4).$ For the three remaining cases, we refer the reader to \cite{bck1} for a detailed exposition. 

In view of  \eqref{E-eq-f-FS} and the fact that   $r\equiv 0\ (\text{mod } 4)$, we have 
$$ \sum_{\ell=0}^r (\pm 1)^\ell u_{\ell,\kappa}(s)= \begin{cases} 
(1+e^{-2\pi is})^{r/2}=2^{r/2} \cos^{r/2}(\pi s) e^{-i\pi{{rs}\over 2}} & \text{for even } \kappa \\
 \mp i(1+e^{-2\pi is})^{r/2}=\mp i 2^{r/2} \cos^{r/2}(\pi s) e^{-i\pi{{rs}\over 2}}& \text{for odd } \kappa. 
   \end{cases}$$
Therefore,
 \begin{eqnarray*} &&Z_{\pm}\big( \widehat f, s-{n\over r}\big)\\
 &=&  2^{r/2} (2\pi)^{-rs} 
 \Gamma_{\Omega}(s)  \cos^{r/2}(\pi s)  \Big\{   \sum_{\kappa { \,\rm even}} Z_\kappa(f, -s) \mp i \sum_{\kappa {\,\rm odd}} Z_\kappa(f, -s)  \Big\}\\
 &=&  2^{r/2-1} (2\pi)^{-rs} 
 \Gamma_{\Omega}(s)  \cos^{r/2}(\pi s)  \Big\{  Z_+(f,-s) +Z_-(f,-s) \mp i  (Z_+(f, -s)-Z_-(f,-s))  \Big\}\\
 &=&  2^{r/2-1} (2\pi)^{-rs} 
 \Gamma_{\Omega}(s)  \cos^{r/2}(\pi s)  \Big\{  (1\mp i) Z_+(f,-s) +  (1\pm i) Z_-(f,-s)  \Big\}\\
 &=&  2^{r/2-1/2} (2\pi)^{-rs} 
 \Gamma_{\Omega}(s)  \cos^{r/2}(\pi s)  \Big\{ e^{\mp i{\pi\over 4}} Z_+(f,-s) + e^{\pm i{\pi\over 4}}   Z_-(f,-s)  \Big\}.
 \end{eqnarray*}

The remaining cases $r\equiv  1, 2$ and $3 \ (\text{mod } 4) $ can be treated in a similar way (see \cite[pages 2314--2317]{bck1} for an exhaustive   exposition). To  state the complete result for Case (c) we need to introduce the following tempered distributions, 
 \begin{equation}\label{Z-odd-even}
 Z^{\text{e}}(f,s)=\sum_{\kappa=0}^{\lfloor \frac{r}{2}\rfloor} (-1)^\kappa Z_{2\kappa}(f,s)\quad\text{and }\quad  Z^{\text{o}}(f,s)  =\sum_{\kappa=0}^{\lfloor \frac{r-1}{2}\rfloor}  (-1)^\kappa Z_{2\kappa+1}(f,s).
 \end{equation}

\begin{theorem} Assume that $V$ is an euclidean Jordan algebra with $d=1$ and of arbitrary rank $r.$ 
The Zeta integrals \eqref{eq1} satisfy the following system of functional equations: 
\begin{itemize} 
\item If $r\equiv 0\ (\mathrm{mod}\ 4)$, then
$$
\left[\begin{matrix}
Z_{+}(\widehat f, s) \\
Z_{-}(\widehat f, s)
\end{matrix}\right]
=e^{i{\pi\over 4}} 2^{{r-1}\over 2}  (2\pi)^{-r(s+{n\over r})} \Gamma_{\Omega}\Big(s+\frac{n}{r}\Big)  \cos^{r\over 2}\Big(\pi (s+{n\over r})\Big) \mathbf{A}(s) 
\left[
\begin{matrix}
Z_+ \big(f, -s-\frac{n}{r}\big) \\
Z_-\big(f, {-s-\frac{n}{r}}\big)
\end{matrix}\right],
$$
where
$$\mathbf{A}(s)=\left[\begin{matrix}
-i & 1 \\
1&-i
\end{matrix}\right].$$
\item If $r\equiv 2\ (\mathrm{mod}\ 4)$, then
$$
\left[\begin{matrix}
Z_{+}(\widehat f, s) \\
Z_{-}(\widehat f, s)
\end{matrix}\right]
=e^{i{\pi\over 4}} 2^{{r-1}\over 2}  (2\pi)^{-r(s+{n\over r})} \Gamma_{\Omega}\Big(s+\frac{n}{r}\Big)  \cos^{r\over 2}\Big(\pi (s+{n\over r})\Big) \mathbf{A}(s) 
\left[
\begin{matrix}
Z_+ \big(f, -s-\frac{n}{r}\big) \\
Z_-\big(f, {-s-\frac{n}{r}}\big)
\end{matrix}\right],
$$
where
$$\mathbf{A}(s)=\left[\begin{matrix}
1&-i   \\
 -i&1
\end{matrix}\right].$$

\item If $r\equiv 1\ (\mathrm{mod}\ 4)$, then 
$$
\left[\begin{matrix}
Z_{+}(\widehat f, s) \\
Z_{-}(\widehat f, s)
\end{matrix}\right]
= (2i)^{\lfloor\frac{r}{2} \rfloor}  (2\pi)^{-r(s+{n\over r})} \Gamma_{\Omega}\Big(s+\frac{n}{r}\Big)  \sin^{\lfloor\frac{r}{2} \rfloor}\Big(\pi (s+{n\over r})\Big) \mathbf{A}(s) 
\left[
\begin{matrix}
Z^{\rm e} \big(f, -s-\frac{n}{r}\big) \\
Z^{\rm o}\big(f, {-s-\frac{n}{r}}\big)
\end{matrix}\right],
$$
where
$$\mathbf{A}(s)=\left[\begin{matrix}
\cos\Big({\pi\over 2}  (s+{n\over r})\Big) &\cos\Big({\pi\over 2}(s+{n\over r})\Big)    \\
i \sin\Big({\pi\over 2} (s+{n\over r})\Big) &-i \sin\Big({\pi\over 2} (s+{n\over r})\Big)
\end{matrix}\right].$$

\item If $r\equiv 3\ (\mathrm{mod}\ 4)$, then 
$$
\left[\begin{matrix}
Z_{+}(\widehat f, s) \\
Z_{-}(\widehat f, s)
\end{matrix}\right]
= (2i)^{\lfloor\frac{r}{2} \rfloor}  (2\pi)^{-r(s+{n\over r})} \Gamma_{\Omega}\Big(s+\frac{n}{r}\Big)  \sin^{\lfloor\frac{r}{2} \rfloor}\Big(\pi (s+{n\over r})\Big) \mathbf{A}(s) 
\left[
\begin{matrix}
Z^{\rm e} \big(f, -s-\frac{n}{r}\big) \\
Z^{\rm o}\big(f, {-s-\frac{n}{r}}\big)
\end{matrix}\right],
$$
where
$$\mathbf{A}(s)=\left[\begin{matrix}
i \sin\Big({\pi\over 2} (s+{n\over r})\Big) &-i \sin\Big({\pi\over 2} (s+{n\over r})\Big)\\
\cos\Big({\pi\over 2}  (s+{n\over r})\Big) &\cos\Big({\pi\over 2}(s+{n\over r})\Big)    
\end{matrix}\right].$$

\end{itemize}
\end{theorem}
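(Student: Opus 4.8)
To prove the theorem, the plan is to run, for each of the three residues $r\equiv 1,2,3\ (\mathrm{mod}\ 4)$, exactly the computation just carried out for $r\equiv 0$; indeed for $r\equiv 0$ the last displayed identity becomes the asserted system after the substitution $s\mapsto s+\tfrac{n}{r}$ (with $\Gamma_\Omega$ and the cosine power then evaluated at $s+\tfrac{n}{r}$ and $\pi(s+\tfrac{n}{r})$). The ingredients are the same throughout: the pair \eqref{eq-f-Z+Zj}--\eqref{eq-f-Z-Zj}, which writes $Z_\pm(\widehat f,s-\tfrac{n}{r})$ as $(2\pi)^{-rs}e^{i\pi rs/2}\Gamma_\Omega(s)$ times $\sum_{\kappa=0}^r\big(\sum_\ell(\pm1)^\ell u_{\ell,\kappa}(s)\big)Z_\kappa(f,-s)$; the closed form \eqref{E-eq-f-FS}--\eqref{coef-mat-FS} for the generating polynomial $\sum_\ell y^\ell u_{\ell,\kappa}(s)=\xi^{-(r-\kappa)}P_\kappa(\xi e^{-i\pi s},y)P_{r-\kappa}(1,\xi e^{-i\pi s}y)$, evaluated at $y=\pm1$, where for $d=1$ one has $\xi=i^{r+1}$, i.e.\ $\xi=i,-1,-i,1$ according as $r\equiv 0,1,2,3$, and $P_\kappa(a,b)=(a+b)^{\lfloor\kappa/2\rfloor}(b-a)^{\lceil\kappa/2\rceil}$; and finally the relations \eqref{Z+-Z-and Zj}, resp.\ the definitions \eqref{Z-odd-even}, used to re-express the $\kappa$-sum in terms of $Z_\pm$, resp.\ of $Z^{\mathrm{e}},Z^{\mathrm{o}}$. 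The one structural fact to notice at the start is that, with $w:=\xi e^{-i\pi s}$, the quadratic $(1+w)(1-w)=1-\xi^2 e^{-2i\pi s}$ equals $2e^{-i\pi s}\cos\pi s$ when $r$ is even and $2ie^{-i\pi s}\sin\pi s$ when $r$ is odd; this dichotomy is the source of the $\cos^{r/2}$ factors for $r\equiv 0,2$ and of the $\sin^{\lfloor r/2\rfloor}$ factors for $r\equiv 1,3$.

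For $r\equiv 2\ (\mathrm{mod}\ 4)$ the computation runs formally as for $r\equiv 0$, only with $\xi=-i$ in place of $\xi=i$. Separating $\kappa$ even from $\kappa$ odd, one again finds that $\sum_\ell(\pm1)^\ell u_{\ell,\kappa}(s)$ depends on $\kappa$ only through its parity, so that by \eqref{Z+-Z-and Zj} the right-hand side of \eqref{eq-f-Z+Zj}--\eqref{eq-f-Z-Zj} is a linear combination of $Z_+(f,-s)$ and $Z_-(f,-s)$; collecting the resulting $e^{\pm i\pi/4}$ coefficients produces the scalar $e^{i\pi/4}2^{(r-1)/2}(2\pi)^{-rs}\Gamma_\Omega(s)\cos^{r/2}(\pi s)$ and the matrix $\mathbf{A}(s)=\left[\begin{smallmatrix}1&-i\\-i&1\end{smallmatrix}\right]$ (the passage from the $r\equiv 0$ matrix $\left[\begin{smallmatrix}-i&1\\1&-i\end{smallmatrix}\right]$ being exactly the effect of the sign change $i\mapsto-i$), and the substitution $s\mapsto s+\tfrac{n}{r}$ then yields the stated system.

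The odd residues $r\equiv 1,3\ (\mathrm{mod}\ 4)$ exhibit an extra phenomenon, which is precisely the reason the distributions $Z^{\mathrm{e}},Z^{\mathrm{o}}$ of \eqref{Z-odd-even} are introduced. Since $r$ is odd, $\kappa$ and $r-\kappa$ have opposite parities, so in $\sum_\ell(\pm1)^\ell u_{\ell,\kappa}(s)$ the exponents of $1+w$ and $1-w$ are never equal; extracting the maximal power of $(1+w)(1-w)=2ie^{-i\pi s}\sin\pi s$ leaves a residual factor $1\pm w$, and what then multiplies $Z_\kappa(f,-s)$ is no longer a function of $\kappa\bmod 2$ but a genuinely period-$4$ expression in $\kappa$, arising from the interplay of $\xi^{-(r-\kappa)}$, the sign $(-1)^{\lceil(r-\kappa)/2\rceil}$ and that leftover factor. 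Regrouping the $Z_\kappa$ according to $\kappa\bmod 4$ is exactly the content of \eqref{Z-odd-even}, and with it the $\kappa$-sum in \eqref{eq-f-Z+Zj}--\eqref{eq-f-Z-Zj} collapses to a combination of $Z^{\mathrm{e}}(f,-s)$ and $Z^{\mathrm{o}}(f,-s)$ whose coefficients involve only $\cos(\tfrac{\pi}{2}s)$, $\sin(\tfrac{\pi}{2}s)$ and the factor $i$; one then reads off the scalar $(2i)^{\lfloor r/2\rfloor}(2\pi)^{-rs}\Gamma_\Omega(s)\sin^{\lfloor r/2\rfloor}(\pi s)$ and the matrix $\mathbf{A}(s)$ written in the $r\equiv 1$ line of the theorem, its row-swap accounting for $r\equiv 3$ (the swap reflecting $\xi=-1$ versus $\xi=+1$), and the shift $s\mapsto s+\tfrac{n}{r}$ completes the proof.

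The one genuinely laborious point — and where sign errors are easiest to make — is the explicit evaluation of $\sum_\ell(\pm1)^\ell u_{\ell,\kappa}(s)$: it requires tracking the four floor/ceiling exponents $\lfloor\kappa/2\rfloor,\lceil\kappa/2\rceil,\lfloor(r-\kappa)/2\rfloor,\lceil(r-\kappa)/2\rceil$ (whose sum is $r$), the power $\xi^{-(r-\kappa)}=i^{-(r+1)(r-\kappa)}$ and the sign $(-1)^{\lceil(r-\kappa)/2\rceil}$, and then checking, by a case analysis on $r\bmod 4$ and on $\kappa\bmod 4$, that everything collapses to the asserted trigonometric coefficient with the asserted $\kappa$-dependence — period at most $2$ in $\kappa$ when $r$ is even, period $4$ when $r$ is odd. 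This is mechanical rather than conceptual, and the exhaustive verification of the three remaining cases is carried out in \cite[pp.~2314--2317]{bck1}.
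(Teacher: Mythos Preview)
Your proposal is correct and follows essentially the same route as the paper: evaluate $\sum_\ell(\pm1)^\ell u_{\ell,\kappa}(s)$ from \eqref{E-eq-f-FS}--\eqref{coef-mat-FS} according to $r\bmod 4$, insert into \eqref{eq-f-Z+Zj}--\eqref{eq-f-Z-Zj}, regroup via \eqref{Z+-Z-and Zj} or \eqref{Z-odd-even}, and shift $s\mapsto s+\tfrac{n}{r}$, with the full case analysis deferred to \cite{bck1}. Your added structural remarks---the identity $(1+w)(1-w)=1-\xi^2e^{-2i\pi s}$ explaining the $\cos^{r/2}$ versus $\sin^{\lfloor r/2\rfloor}$ dichotomy, and the period-$4$ dependence in $\kappa$ for odd $r$ forcing the appearance of $Z^{\mathrm e},Z^{\mathrm o}$---are not spelled out in the paper but are fully consistent with (and illuminate) its computation.
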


\section{Construction of the source operator}
%%%%
  
Recall that  $\mathcal X=G/\bar P$ be the conformal completion of $V$, where $G$ is the double covering \eqref{til} of $\Co(V)^+.$ The characters of $\bar P$ are parametrized by $(\lambda, \varepsilon)\in\mathbb C\times \{\pm\}$. To a character $\chi_{\lambda, \varepsilon}$,  there corresponds a line bundle over $\mathcal X$ and let $\mathcal H_{\lambda, \varepsilon}$ be the corresponding space of smooth sections of this bundle, which can be realized as the space of smooth functions $f : G\longrightarrow \mathbb C$ satisfying
\[ f(g\bar p) = \chi_{\lambda, \varepsilon}(\bar p)^{-1}f(g),
\] for each $g\in G \text{ and } \bar p\in \bar P.$
The  formula 
$$
\big(\pi_{\lambda, \varepsilon}(g)f\big)(x) = a(g^{-1},x)^{-\lambda, \varepsilon} f\big(g^{-1}(x)\big) 
$$
defines a smooth representation $\pi_{\lambda, \varepsilon}$ of $G$ on $\mathcal H_{\lambda, \varepsilon}$, where $a(g,x)$ is as in \eqref{a}.

Consider two pairs  $(\lambda,\varepsilon),\,(\mu,\eta)\in \mathbb C\times \{\pm\}$. The main idea of the \emph{source operator method}, inspired by the $\Omega$-process (see \cite{c23}), is to construct  a differential operator \[E_{(\lambda,\varepsilon),\, (\mu,\eta)} : \ \mathcal H_{\lambda,\, \varepsilon} \otimes \mathcal H_{\mu,\, \eta} \longrightarrow \mathcal H_{\lambda+1,\, -\varepsilon} \otimes \mathcal H_{\mu+1,\, -\eta}\] 
which is $G$-covariant with respect to  $(\pi_{\lambda,\varepsilon}\otimes \pi_{\mu,\eta}\,,\,\pi_{\lambda+1, -\varepsilon}\otimes \pi_{\mu+1,-\eta})$.

The source operator is obtained by combining a natural \emph{multiplication  operator} $M$ and the classical \emph{Knapp-Stein intertwining operators}.

To describe the operator $M$, it is convenient to use the noncompact picture. Recall that the injective map $  V\ni v\mapsto n_v$ gives a chart on an open set of $\mathcal X$ which is dense and of full measure. The Jordan determinant $\bf det$ of   $V$ satisfies the following covariance relation 
\begin{equation} \label{covdet}
{\bf det}(g(x)-g(y)) = a(g,x)^{-1} \,{\bf det}(x-y)\, a(g,y)^{-1},
\end{equation}
whenever $g$ is defined at $x$ and $y$, with $x,y\in V$ and  $g\in G$. For $g=\iota$, this is the famous \emph{Hua formula} and the general formula is obtained by verifying it on the generators of $G$ and then by using the cocycle property of  $a(g,x)$.

Let $f\in \mathcal H_{\lambda, \varepsilon}\otimes  \mathcal H_{\mu, \eta}$ and let $f(x,y)$ be its corresponding local expression on $V\times V$. Then, thanks to the covariance relation \eqref{covdet}, the function
\[ {\bf det}(x-y) f(x,y)
\]
can be interpreted as the local expression of some $Mf\in \mathcal H_{\lambda-1,-\varepsilon}\otimes \mathcal H_{\mu-1,-\eta}$,
and the operator $M$ so defined satisfies the intertwining relation
\[M\circ \big(\pi_{\lambda,\varepsilon}(g)\otimes \pi_{\mu,\eta}(g)\big)
=\big(\pi_{\lambda-1,-\varepsilon}(g)\otimes \pi_{\mu-1,-\eta}(g) \big)\circ M.\]
The \emph{Knapp-Stein} operators are a meromorphic family of operators 
\[
I_{\lambda,\varepsilon} : \mathcal H_{\lambda,\,\varepsilon}\longrightarrow \mathcal H_{\frac{2n}{r}-\lambda,\, \varepsilon}
\]
which satisfy the intertwining relation
\[I_{\lambda,\, \varepsilon} \circ \pi_{\lambda,\,\varepsilon}(g) = \pi_{\frac{2n}{r} -\lambda,\, \varepsilon}(g)\circ I_{\lambda,\, \varepsilon}\qquad\text{ for any } g\in G.
\]
Now consider the following diagram
$$
\begin{CD}
\mathcal H_{(\lambda,\varepsilon), (\mu,\eta)} @> F_{(\lambda, \varepsilon), (\mu,\eta)}>>  \mathcal H_{(\lambda+1,-\varepsilon),(\mu+1,-\eta)}\\
@V I_{\lambda,\varepsilon}\otimes I_{\mu,\eta} V  V @AA I_{\frac{2n}{r}-\lambda-1, -\varepsilon}\otimes\, I_{\frac{2n}{r}-\mu-1,-\eta}A\\
\mathcal H_{(\frac{2n}{r}-\lambda,\varepsilon), (\frac{2n}{r}-\mu,\eta)} @>M> >\mathcal H_{(\frac{2n}{r}-\lambda-1, -\varepsilon),(\frac{2n}{r}-\mu-1,-\eta)}
\end{CD}
$$
That is,
\begin{equation}\label{defFlambdamu}
F_{(\lambda,\varepsilon),(\mu, \eta)}  = (I_{\frac{2n}{r}-\lambda-1,-\varepsilon}\otimes I_{\frac{2n}{r}-\mu-1, -\eta})\circ M\circ (I_{\lambda, \varepsilon} \otimes I_{\mu, \eta}).
\end{equation}
By construction,   $F_{(\lambda, \varepsilon),(\mu,\eta)}$ is covariant with respect to  $(\pi_{\lambda,\varepsilon}\otimes \pi_{\mu,\eta}\,,\,\pi_{\lambda+1, -\varepsilon}\otimes \pi_{\mu+1,-\eta})$ and depends meromorphically on $(\lambda, \mu)$.

The main theorem can now be stated.
\begin{theorem} \label{Flambdamu}
The operator $F_{(\lambda, \epsilon), (\mu, \eta)}$ is a \emph{differential operator} on $\mathcal X\times \mathcal X$. Moreover, after a normalization by a meromorphic function in $(\lambda, \mu)$, the operator depends polynomially on $(\lambda, \mu)$.
\end{theorem}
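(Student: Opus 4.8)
The plan is to show that $F_{(\lambda,\varepsilon),(\mu,\eta)}$, defined by \eqref{defFlambdamu} as a composition of two Knapp--Stein operators, the multiplication operator $M$, and two more Knapp--Stein operators, is a differential operator by working entirely in the noncompact picture on $V\times V$ and analyzing the integral kernel of $F$. Recall that in the noncompact picture the Knapp--Stein operator $I_{\lambda,\varepsilon}$ acts by convolution-type integration against a kernel built from ${\bf det}(x-y)^{-\lambda,\varepsilon}$ (up to normalizing constants depending meromorphically on $\lambda$); it is the standard fact that $I_{\lambda,\varepsilon}$ becomes, for $\lambda$ in a discrete set of ``special'' values, a differential operator — indeed a power of ${\bf det}\big(\tfrac{\partial}{\partial x}\big)$ type operator — because the distribution ${\bf det}(x)^{-\lambda,\varepsilon}$ has poles whose residues are (derivatives of) the Dirac distribution at $0$. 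The point is that $F$ is a \emph{tensor product} composition and that the middle multiplication by ${\bf det}(x-y)$ exactly compensates the shift $\lambda\mapsto\lambda-1$, so that after composing, the four generically-nonlocal integral operators collapse to something local.

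First I would write down the kernel of $F_{(\lambda,\varepsilon),(\mu,\eta)}$ explicitly: it is an integral over two intermediate variables $(x',y')$ of a product of four ${\bf det}$-powers (the two kernels of $I_{\lambda,\varepsilon}\otimes I_{\mu,\eta}$, the factor ${\bf det}(x'-y')$ from $M$, and the two kernels of $I_{\frac{2n}{r}-\lambda-1,-\varepsilon}\otimes I_{\frac{2n}{r}-\mu-1,-\eta}$). The key algebraic observation is that the product $I_{\frac{2n}{r}-\lambda-1,-\varepsilon}\circ(\text{mult.\ by }{\bf det})\circ I_{\lambda,\varepsilon}$, as a scalar-variable operator, is — via the functional equations for the Zeta distributions established in Section~3 — essentially the identity operator up to an explicit meromorphic scalar, because the Fourier transform intertwines $I_{\lambda,\varepsilon}$ with multiplication by a ${\bf det}$-power on the Fourier side, multiplication by ${\bf det}(x-y)$ with the operator ${\bf det}(\partial)$, and the composition of all of these telescopes. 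Concretely: conjugating $M$ and the Knapp--Stein operators by the Fourier transform turns \eqref{defFlambdamu} into a composition of \emph{multiplication} operators (on the Fourier side) and the differential operator ${\bf det}(\partial_x-\partial_y)$; the theorem on Zeta functional equations of Section~3 is exactly what is needed to compute these multipliers and check they are polynomial (after the stated normalization) in $(\lambda,\mu)$. Since a Fourier multiplier that is polynomial is a constant-coefficient differential operator, and since the only genuinely operator-valued (non-scalar) ingredient surviving is ${\bf det}(\partial_x-\partial_y)$ composed with such multipliers, $F$ is a differential operator on $V\times V$; covariance, already established by construction, then forces it to extend to a differential operator on the compactification $\mathcal X\times\mathcal X$.

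The remaining steps are: (i) verify the Fourier-conjugation computation carefully, keeping track of the $\varepsilon$/$\eta$ signs and the four types of Jordan algebras, invoking the appropriate case of the Section~3 theorems for the scalar multipliers; (ii) extract from these the normalizing meromorphic function $\gamma(\lambda,\mu)$ such that $\gamma(\lambda,\mu)^{-1}F_{(\lambda,\varepsilon),(\mu,\eta)}$ has Fourier multiplier polynomial in $(\lambda,\mu)$ — this is where the explicit $\Gamma_V$, $\Gamma_\Omega$ factors in the functional equations cancel against the Knapp--Stein normalizations; (iii) conclude that a $G$-covariant operator $\mathcal H_{\lambda,\varepsilon}\otimes\mathcal H_{\mu,\eta}\to\mathcal H_{\lambda+1,-\varepsilon}\otimes\mathcal H_{\mu+1,-\eta}$ whose noncompact local expression is a constant-coefficient (in fact polynomial-in-$(\lambda,\mu)$-coefficient) bidifferential operator is automatically a differential operator globally on $\mathcal X\times\mathcal X$, since differential operators between line bundles are local and covariance propagates the local formula across the open dense chart.

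The main obstacle I expect is step (i)–(ii): the Fourier transform does \emph{not} diagonalize the two-variable picture as cleanly as in the rank-one (sphere) case, because ${\bf det}(x-y)$ is not a simple quadratic form when $r>1$, and the intermediate integral over $(x',y')$ must be organized so that the Zeta functional equations of Section~3 apply in the right variable. One must reduce the two-variable kernel to a one-variable kernel in the difference $x-y$ (using translation covariance, i.e. property (ii) of $c(g,x)$), at which point the scalar Zeta theory applies; checking that this reduction is compatible with all four $\varepsilon,\eta$ sign choices and all four Jordan types — and that the resulting multiplier is genuinely a polynomial rather than merely meromorphic after the normalization — is the delicate bookkeeping at the heart of the proof.
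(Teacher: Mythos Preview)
Your overall framework is right---work in the noncompact picture, pass to the Fourier side where each Knapp--Stein operator becomes multiplication by a power of ${\bf det}$ and $M$ becomes the constant-coefficient operator ${\bf det}\big(\tfrac{\partial}{\partial\xi}-\tfrac{\partial}{\partial\zeta}\big)$---and this is exactly how the paper proceeds. But there is a genuine gap in your argument at the step you describe as ``the composition of all of these telescopes.''

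On the Fourier side, $F$ becomes (up to a meromorphic scalar coming from the Zeta functional equations)
\[
{\bf det}(\xi)^{1-s,-\varepsilon}\,{\bf det}(\zeta)^{1-t,-\eta}\ \circ\ {\bf det}\Big(\frac{\partial}{\partial\xi}-\frac{\partial}{\partial\zeta}\Big)\ \circ\ {\bf det}(\xi)^{s,\varepsilon}\,{\bf det}(\zeta)^{t,\eta}.
\]
This is \emph{not} a Fourier multiplier, and the two flanking multiplications do not simply cancel: there is a genuine differential operator sandwiched between them. The Zeta functional equations of Section~3 only identify what the multipliers are; they say nothing about why this conjugated object is a differential operator with polynomial coefficients. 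Your claim that the one-variable composition $I_{\frac{2n}{r}-\lambda-1,-\varepsilon}\circ(\text{mult.\ by }{\bf det})\circ I_{\lambda,\varepsilon}$ is ``essentially the identity up to a scalar'' is false---without the middle factor it would be (that is the Knapp--Stein inversion), but with it the result is a nontrivial operator.

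What is actually needed, and what the paper proves as its Theorem~\ref{Dst}, is a Bernstein--Sato--type identity: for every $p$ in the $L$-module $\mathcal W_k$ generated by the principal minors, one has $p\big(\tfrac{\partial}{\partial x}\big){\bf det}(x)^{\lambda+1}=c_k(\lambda)\,p^\sharp(x)\,{\bf det}(x)^{\lambda}$ with $p^\sharp\in\mathcal W_{r-k}$ and $c_k$ polynomial in $\lambda$. Combined with a generalized Taylor/Leibniz formula for ${\bf det}$ expressed in a Fischer-orthonormal basis of $\mathcal W$, this lets one push the ${\bf det}$-power multiplications through ${\bf det}\big(\tfrac{\partial}{\partial\xi}-\tfrac{\partial}{\partial\zeta}\big)$ at the cost of lowering the exponent by $1$ and producing a polynomial-coefficient differential operator $D_{s,t}$. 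Only then do the outer and inner ${\bf det}$-powers cancel. This algebraic step---not the analytic content of Section~3---is the heart of the proof, and it is entirely absent from your proposal. Your suggested reduction to a one-variable kernel in $x-y$ would not help here, since the multipliers ${\bf det}(\xi)^s$ and ${\bf det}(\zeta)^t$ live in separate variables and do not combine into a function of $\xi-\zeta$.
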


The proof that $F_{(\lambda, \varepsilon),(\mu,\eta)}$ is a {differential operator} is done in the noncompact picture. We keep the same notation for the local expression of the operators involved in the definition of $F_{(\lambda,\varepsilon),(\mu, \eta)} $. The proof uses as an essential ingredient the \emph{Fourier transform} on $V\times V$. In the local chart $V\times V$, the operator $M$ is  the multiplication by a polynomial and a Knapp-Stein operator is  a convolution  by a tempered distribution on $V$. Both operators have nice correspondents on the Fourier side. However, the Knapp-Stein operators have singular kernels. So, in order to compose the operators, there are some technical difficulties. To circumvent these difficulties, we use the Schwartz spaces $\mathcal S(V),$ $ \mathcal S(V\times V)\simeq \mathcal S(V)\otimes \mathcal S(V)$ and their dual spaces, namely the spaces of tempered distributions $\mathcal S'(V)$ and  $\mathcal S'(V\times V)$. The Schwartz spaces are stable under the infinitesimal action of the representations $\pi_{\lambda, \varepsilon}$ and   by the multiplication operator $M$, while the Knapp-Stein operators map $\mathcal S(V)$ into $\mathcal S'(V)$. So it is possible to consider the composition of $M$ and of \emph{one} Knapp-Stein operator, getting an operator from $\mathcal S(V)$ into $\mathcal S'(V)$.  A (seemingly weaker) form of the identity used for the definition of $F_{(\lambda, \varepsilon), (\mu, \eta)}$ can be used. In fact, the Knapp-Stein operators are generically invertible (i.e. outside of a denumerable set of values of $\lambda$) and the inverse of $I_{\lambda, \varepsilon}$ is (up to a meromorphic factor) the operator $I_{\frac{2n}{r}-\lambda,\varepsilon}$ so that \eqref{defFlambdamu} can be replaced by
\begin{equation}\label{weakdef}
M\circ (I_{\lambda, \varepsilon}\otimes I_{\mu, \eta}) = \kappa(\lambda, \mu) (I_{\lambda+1,-\varepsilon}\otimes I_{\mu+1,-\eta})\circ F_{(\lambda,\varepsilon),(\mu, \eta)},
\end{equation}
where $\kappa(\lambda, \mu)$ is a meromorphic function on $\mathbb C\times \mathbb C$.
As observed in the $\Omega$-process  \cite{c23}, in the noncompact picture the source operator is a differential operator with \emph{polynomial coefficients}. We also pin down that the coefficients do not depend on $(\varepsilon, \eta)$, and hence we omit these indices in the notation. The corresponding operator on the Fourier transform side is also a differential operator with polynomial coefficients. It turns out that the same properties hold in general. From these observations, it  follows that  \eqref{weakdef} is clearly equivalent to  an operator identity on the Fourier transform side, which we will now describe.

To the multiplication operator $M$ corresponds on the Fourier side transform the constant coefficients differential operator $\displaystyle {\bf det}\left(\frac{\partial}{\partial \xi}-\frac{ \partial}{\partial \zeta} \right)$. The kernel of the Knapp-Stein operator is (up to a normalizing factor)
\[{\bf det}( x)^{-\frac{2n}{r}+\lambda, \varepsilon}\ ,
\]
and its Fourier transform was studied in Section 3. After a change of parameters and up to  normalizing factors, the identity \eqref{weakdef} turns out to be equivalent to an identity on the Fourier side, which we state as a theorem.
\begin{theorem} \label{Dst}
 There exists a differential operator $D_{s,t}$ with polynomial coefficients on $V\times V$ such that 
\begin{equation}\label{Fourierversion}
{\bf det}\left( \frac{\partial}{\partial \xi}-\frac{\partial}{\partial \zeta}\right)\circ {\bf det} (\xi)^{s,\varepsilon} \,{\bf det} (\zeta)^{t,\eta} ={\bf det} (\xi)^{s-1,-\varepsilon}{\bf det}( \zeta)^{t-1,-\eta}\circ D_{s,t}.
\end{equation}
Moreover, the coefficients of $D_{s,t}$ are polynomial  in $(s,t)$.
\end{theorem}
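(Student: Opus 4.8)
The plan is to establish \eqref{Fourierversion} by a direct computation of how the constant-coefficient operator ${\bf det}\!\left(\frac{\partial}{\partial\xi}-\frac{\partial}{\partial\zeta}\right)$ acts on the product of complex powers ${\bf det}(\xi)^{s,\varepsilon}\,{\bf det}(\zeta)^{t,\eta}$, and then reading off the ``quotient'' operator $D_{s,t}$. The key tool is the \emph{Bernstein--Sato identity} for the Jordan determinant already quoted in Section~3, namely that ${\bf det}\!\left(\frac{\partial}{\partial x}\right)$ applied to ${\bf det}(x)^{s+1,\varepsilon}$ returns a scalar polynomial in $s$ times ${\bf det}(x)^{s,-\varepsilon}$; more generally one knows how powers of the constant-coefficient operator ${\bf det}(\partial/\partial x)$, and more precisely the full polarized operator ${\bf det}\!\left(\frac{\partial}{\partial\xi}-\frac{\partial}{\partial\zeta}\right)$ applied to a product, decompose. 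First I would expand ${\bf det}\!\left(\frac{\partial}{\partial\xi}-\frac{\partial}{\partial\zeta}\right)$ via the Jordan-algebra analogue of the Cauchy--Binet / polarization formula: ${\bf det}(u-v)$ is a polynomial in the pair $(u,v)$, so ${\bf det}\!\left(\frac{\partial}{\partial\xi}-\frac{\partial}{\partial\zeta}\right)$ is a sum of terms of the form $\partial^{I}_\xi\,\partial^{J}_\zeta$ with $|I|+|J|=r$, each a product of a constant-coefficient differential operator in $\xi$ of order $|I|$ and one in $\zeta$ of order $|J|$.

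Next I would apply each such $\partial^{I}_\xi$ to ${\bf det}(\xi)^{s,\varepsilon}$ and each $\partial^{J}_\zeta$ to ${\bf det}(\zeta)^{t,\eta}$. The crucial structural fact is that differentiating ${\bf det}(\xi)^{s,\varepsilon}$ by any constant-coefficient operator of order $k\le r$ produces ${\bf det}(\xi)^{s-1,-\varepsilon}$ (the power drops by exactly one, and the parity flips, because of the Bernstein identity and because ${\bf det}$ has degree $r$) multiplied by a differential operator of order $r-k$ in $\xi$ whose coefficients are polynomials in $\xi$ and in $s$. Concretely: write ${\bf det}(\xi)^{s,\varepsilon}={\bf det}(\xi)\cdot{\bf det}(\xi)^{s-1,-\varepsilon}$ and use the Leibniz rule, the point being that $\partial^I_\xi$ hitting the explicit polynomial factor ${\bf det}(\xi)$ enough times eventually ``uses up'' the determinant factor, while the surviving differentiations of ${\bf det}(\xi)^{s-1,-\varepsilon}$ can be carried, by induction on $r$ using the gradient identity $\frac{\partial}{\partial\xi_j}{\bf det}(\xi)=\langle\nabla{\bf det}(\xi),e_j\rangle$ and the fact that $\nabla{\bf det}(\xi)={\bf det}(\xi)\,\xi^{-1}$, into coefficients that are polynomial in $\xi$ once one clears the resulting power of ${\bf det}(\xi)$ against the ${\bf det}(\xi)$ we have in reserve. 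Collecting all terms, the total operator applied to ${\bf det}(\xi)^{s,\varepsilon}{\bf det}(\zeta)^{t,\eta}$ factors as ${\bf det}(\xi)^{s-1,-\varepsilon}{\bf det}(\zeta)^{t-1,-\eta}$ times a differential operator $D_{s,t}$ of order at most $r$ with coefficients polynomial in $(\xi,\zeta)$ and in $(s,t)$; this is exactly \eqref{Fourierversion}, and the polynomial dependence on $(s,t)$ is automatic from the construction since the only place $s,t$ enter is as scalar multipliers coming from the Bernstein polynomials and from the Leibniz expansion.

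The main obstacle, I expect, is the bookkeeping of the Leibniz expansion in the Jordan-algebra setting: showing that after factoring out ${\bf det}(\xi)^{s-1,-\varepsilon}{\bf det}(\zeta)^{t-1,-\eta}$ what remains genuinely has \emph{polynomial} (not merely rational) coefficients in $\xi,\zeta$. The naive computation produces negative powers of ${\bf det}(\xi)$ (from repeatedly differentiating a complex power and from $\xi^{-1}=\frac{1}{{\bf det}(\xi)}\,\operatorname{adj}(\xi)$ where $\operatorname{adj}$ is the Jordan adjugate, a polynomial map of degree $r-1$), and one must check these cancel against the one reserved power of ${\bf det}(\xi)$. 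The clean way to see this is to observe that $D_{s,t}$, \emph{a priori} a differential operator with rational coefficients, is in fact the Fourier-side incarnation of the differential operator $F_{(\lambda,\varepsilon),(\mu,\eta)}$ whose differential nature is guaranteed by Theorem~\ref{Flambdamu} via the Knapp--Stein argument sketched in the text; alternatively, one argues directly that a rational-coefficient operator $D_{s,t}$ satisfying \eqref{Fourierversion} for all $s,t$ must, by comparing pole orders along $\{{\bf det}(\xi)=0\}$ on both sides (the left side being manifestly regular there after the Bernstein identity), have no poles, hence polynomial coefficients. Either route closes the argument; I would present the second, self-contained one, with the first as a remark. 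Finally, polynomiality in $(s,t)$ follows because each coefficient of $D_{s,t}$ is a finite sum of products of Bernstein-type scalars $\prod_{k}(s+1+k\tfrac{d}{2})$ and $\prod_k(t+1+k\tfrac{d}{2})$ with binomial factors, all polynomial in $(s,t)$.
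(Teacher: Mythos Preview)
Your overall strategy---expand ${\bf det}(\partial_\xi-\partial_\zeta)$, apply Leibniz, and invoke a Bernstein--Sato type identity---is the same skeleton as the paper's. The genuine gap is exactly the one you flag yourself, and neither of your proposed cures works.

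First, the claim that applying an arbitrary constant-coefficient operator $\partial^{I}_\xi$ of order $k\le r$ to ${\bf det}(\xi)^{s}$ yields ${\bf det}(\xi)^{s-1}$ times something with polynomial coefficients is false. Already for $2\times 2$ symmetric matrices, $\partial_{x_{11}}^2{\bf det}(X)^s=s(s-1)x_{22}^2\,{\bf det}(X)^{s-2}$, and $x_{22}^2/{\bf det}(X)$ is not a polynomial. So term-by-term the expansion into monomials $\partial^I_\xi\partial^J_\zeta$ does not have the desired form; you must rely on cancellations you have not exhibited. Your fix (a), appealing to Theorem~\ref{Flambdamu}, is circular: in the paper Theorem~\ref{Flambdamu} is \emph{derived from} Theorem~\ref{Dst}, not the other way around. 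Your fix (b), the pole comparison, begs the question: the left side is not ``manifestly regular'' along $\{{\bf det}(\xi)=0\}$---for generic $s$ it behaves like ${\bf det}(\xi)^{s-a}$ for various $a$, and showing the worst exponent is $s-1$ is precisely the polynomiality statement you want.

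What the paper supplies, and what your proposal is missing, is a \emph{structured} expansion. One does not expand ${\bf det}(u-v)$ into monomials but rather, via a generalized Taylor formula relative to the Fischer inner product (Proposition~\ref{propgenTaylor}), into $\sum_{k,i} p_{k,i}(u)\,\breve p_{k,i}(v)$ where the $p_{k,i}$ form an orthonormal basis of the $L$-module $\mathcal W_k\subset\mathcal W$ generated by the translates of ${\bf det}$. The decisive ingredient is then a Bernstein identity valid not just for ${\bf det}$ but for every $p\in\mathcal W_k$: one has $p(\partial_x){\bf det}(x)^{\lambda+1}=\big(\prod_{j=1}^k(\lambda+1+\tfrac{d}{2}(j-1))\big)\,p^\sharp(x)\,{\bf det}(x)^{\lambda}$ with $p^\sharp\in\mathcal W_{r-k}$ a genuine polynomial (this is extracted from Faraut--Kor\'anyi). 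With this, each piece of the Fischer expansion individually produces ${\bf det}(\xi)^{s-1}{\bf det}(\zeta)^{t-1}$ times a polynomial-coefficient differential operator; no cancellation is needed and polynomiality in $(s,t)$ is immediate. Iterating via the matching Leibniz formula handles the extra factor $f(\xi,\zeta)$. This structural step---identifying the right basis and the generalized Bernstein identity for $\mathcal W_k$---is the missing idea in your proposal.
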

Let us sketch a proof of this result, which is simpler than the one given  in \cite{bck1}.
\smallskip

{\bf Step 1.} The first ingredient in the proof is a version of the Taylor formula and of the Leibniz formula, with reference to the \emph{Fischer inner product} on the space of polynomials.
Let $(E, \langle\!\!\langle\cdot,\cdot \rangle\!\!\rangle)$ be a finite dimensional Euclidean vector space. Let $\mathcal P$ be the space of all (real-valued) polynomials on $E$. For any  $p\in \mathcal P$, we let $\displaystyle p\left( \frac{\partial}{\partial x}\right)$ to be the unique constant coefficients differential operator  on $E$ characterized by the following property, valid for any $y\in E$
\[p\left( \frac{\partial}{\partial x}\right) e^{\langle\!\!\langle x,y \rangle\!\!\rangle}= p(y) e^{\langle\!\!\langle x,y \rangle\!\!\rangle}\ .
\]
The same operator will occasionally be denoted  by $ \partial(p)$.

Recall that the Fischer inner product on $\mathcal P$ is given by
\[(p,q)_F = \partial(p)q(0).
\]
The main property of the Fischer product is the following: for $p,q,r\in \mathcal P$
\begin{equation}
(p,qr)_F = (\partial(r)p,q)_F.
\end{equation}

For $k\in \mathbb N$, denote by $\mathcal P_k$ the subspace of all polynomials which are homogeneous of degree $k$. The corresponding decomposition 
\[\mathcal P = \bigoplus_{k=0}^\infty \mathcal P_k
\]
is orthogonal with respect to the Fischer inner product.

Let $p\in \mathcal P$. For any $a\in E$, denote by $p_a$ the polynomial given by
\[p_a(x) = p(x+a)\ .
\]
Let $\mathcal R$ be the subspace of $\mathcal P$ spanned by all $(p_a)_{a\in E}$. It is also the subspace spanned by all partial derivatives of $p$. Choose an orthonormal basis (for the Fischer inner product) $(p_i)_{i\in I}$ of $\mathcal R$. To any element $p_i$ of the basis, associate the polynomial $\breve{p}_i$ defined by
\[\breve{p}_i = \partial (p_i) p.
\]
By definition of $\mathcal R$, $\breve{p}_i$ belongs to $\mathcal R$.

\begin{proposition}\label{propgenTaylor}  
\begin{itemize}
\item[{i)}]  For all $x,y\in E$
\begin{equation}\label{genTaylor}
p(x+y) = \sum_{i\in I} p_i(x) \, \breve{p}_i (y),
\end{equation}
\item[{ii)}]  For all smooth functions $f,g$ on $E$
\begin{equation}\label{genLeibniz}
\partial(p) (fg) = \sum_{i\in I} \partial(\breve{p}_i)f\ \partial(p_i)g.
\end{equation}
\end{itemize}

\end{proposition}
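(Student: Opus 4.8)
The plan is to establish the generalized Taylor formula \eqref{genTaylor} first, and then to deduce the Leibniz formula \eqref{genLeibniz} from it by a duality argument with respect to the Fischer inner product. For part (i), fix $y\in E$ and regard both sides as polynomials in $x$. Since $\mathcal{R}$ is spanned by the translates $p_a$, and in particular contains all the polynomials $p_y = p(\cdot + y)$, the polynomial $x\mapsto p(x+y)$ lies in $\mathcal{R}$; so it suffices to expand it in the orthonormal basis $(p_i)_{i\in I}$ of $\mathcal{R}$ and identify the coefficients. Writing $p(x+y) = \sum_{i\in I} c_i(y)\, p_i(x)$, the coefficient $c_i(y)$ is, by definition of an orthonormal expansion, $c_i(y) = (p(\cdot+y), p_i)_F$. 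Now I would use the characterizing property $p_i(\partial/\partial x)\, e^{\langle\!\langle x,y\rangle\!\rangle} = p_i(y)\, e^{\langle\!\langle x,y\rangle\!\rangle}$ together with the interpretation of the Fischer product as $\partial(p_i)$ applied and evaluated at $0$: explicitly, $(p(\cdot+y), p_i)_F = \partial(p_i)\big[x\mapsto p(x+y)\big]\big|_{x=0} = \big(\partial(p_i)p\big)(y) = \breve{p}_i(y)$, where the middle equality is just the translation-invariance of constant-coefficient differential operators. This gives $c_i(y) = \breve{p}_i(y)$, which is exactly \eqref{genTaylor}.

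For part (ii), the cleanest route is to first verify \eqref{genLeibniz} on exponentials and then invoke density/spanning. Take $f(x) = e^{\langle\!\langle x,a\rangle\!\rangle}$ and $g(x) = e^{\langle\!\langle x,b\rangle\!\rangle}$ for arbitrary $a,b\in E$; then $fg = e^{\langle\!\langle x, a+b\rangle\!\rangle}$, so the left side of \eqref{genLeibniz} is $p(a+b)\, e^{\langle\!\langle x,a+b\rangle\!\rangle}$, while the right side is $\sum_{i\in I} \breve{p}_i(a)\, p_i(b)\, e^{\langle\!\langle x, a+b\rangle\!\rangle}$. By part (i) applied with the roles of the two variables, $\sum_{i\in I}\breve{p}_i(a)\,p_i(b) = \sum_{i\in I} p_i(b)\,\breve p_i(a) = p(b+a) = p(a+b)$, so the identity holds for all such $f,g$. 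Since finite linear combinations of exponentials $e^{\langle\!\langle \cdot,a\rangle\!\rangle}$ are dense (in the appropriate $C^\infty$ topology, or simply because derivatives at a point of an arbitrary smooth function can be matched by such combinations to any finite order, which is all that a fixed differential operator $\partial(p)$ sees at a point), the identity \eqref{genLeibniz} extends to all smooth $f,g$. Alternatively, one can argue purely algebraically on the level of formal power series / jets, which avoids any analytic subtlety.

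The main obstacle, to the extent there is one, is a bookkeeping point rather than a deep one: one must be careful that the polynomials $\breve p_i = \partial(p_i)p$ are again elements of $\mathcal{R}$ (so that the sum in \eqref{genTaylor} is well-posed within the finite-dimensional space $\mathcal{R}$), and that the index set $I$ is finite so that all the sums are literally finite and the manipulations — in particular interchanging $\sum_i$ with the differential operator $\partial(p)$ in part (ii) — are unproblematic. Both facts follow from the observation in the excerpt that $\mathcal{R}$ equals the span of all partial derivatives of $p$, hence is finite-dimensional and stable under each $\partial(p_i)$. The other delicate point is to make sure that, in part (i), the expansion $p(x+y) = \sum_i c_i(y) p_i(x)$ is an identity of polynomials in $x$ with coefficients depending polynomially on $y$; this is automatic because the left side is jointly polynomial in $(x,y)$ and $(p_i)$ is a basis of the relevant space, so the $c_i(y)$ are uniquely determined polynomials, and it is legitimate to compute them by pairing against $p_i$ in the $x$-variable.
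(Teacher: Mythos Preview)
Your proof of part (i) is essentially identical to the paper's: fix $y$, observe that $p_y=p(\cdot+y)\in\mathcal R$, expand in the orthonormal basis $(p_i)$, and compute the Fischer coefficients using translation-invariance of $\partial(p_i)$.

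For part (ii) your route genuinely differs from the paper's. You test the identity on exponentials $f=e^{\langle\!\langle\cdot,a\rangle\!\rangle}$, $g=e^{\langle\!\langle\cdot,b\rangle\!\rangle}$, reducing it immediately to the already-proved Taylor formula $p(a+b)=\sum_i p_i(b)\,\breve p_i(a)$, and then extend by a density/jet argument. The paper instead argues purely through the Fischer inner product: it reduces to polynomial $f,g$ and to evaluation at $0$, writes $\partial(p)(fg)(0)=(p,fg)_F=(\partial(f)p,g)_F$, expands $\partial(f)p\in\mathcal R$ in the basis $(p_i)$, and identifies each coefficient $a_i=(\partial(f)p,p_i)_F=(\breve p_i,f)_F=\partial(\breve p_i)f(0)$ via the adjunction $(p,qr)_F=(\partial(r)p,q)_F$. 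Your approach has the virtue of making the Leibniz formula a transparent corollary of the Taylor formula; the paper's approach is entirely algebraic and never leaves the polynomial world, so no approximation step is needed. Your density claim is the only point that could use tightening: the cleanest justification (which you do sketch) is that at any fixed point both sides are linear functionals on finite jets of $f$ and of $g$, and a linear functional on jets vanishing on all exponentials vanishes identically, since a constant-coefficient operator $\partial(q)$ annihilating every $e^{\langle\!\langle\cdot,a\rangle\!\rangle}$ forces $q(a)=0$ for all $a$.
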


\begin{proof} Fix $y\in E$  and let $p_y (x)= p(x+y)$. As $\mathcal R$ is stable by translations, $p_y$ belongs to $\mathcal R$. As $(p_i)_{i\in I}$ is an orthonormal basis, 
\[p_y(x) = \sum_{i\in I} ( p_y,p_i )_F \  p_i(x)\ .
\]
Next,
\[  ( p_y,p_i)_F       = \partial(p_i)p_y(0) = \big(\partial(p_i) p(\cdot+y)\big)(0)= \big(\partial(p_i) p\big)(0+y) =\big( \partial(p_i)p\big) (y),
\]
and \eqref{genTaylor} follows.

It is enough to prove \eqref{genLeibniz} for $f,g\in \mathcal P$. As all differential operators $\partial(p), \partial(p_i), \partial (\breve{p}_i)$ have constant coefficients, they commute with translations. It is enough to prove the equality at $x=0$. Next,
\[\partial(p) (fg)\, (0) = (p, fg)_F = (\partial(f) p,g)_F.
\]
As $\partial(f) p$ belongs to $\mathcal R$,  $\displaystyle \partial(f) p = \sum_{i\in I} a_i p_i$ where
\[ a_i = (\partial( f)p, p_i)_F= (p, fp_i) = (\partial(p_i) p, f) = (\breve{p}_i,f)_F = \partial(\breve{p_i})f\, (0).
\]
Hence
\[\partial(p) (fg)\, (0) =\sum_{i\in I}a_i(p_i,g)_F= \sum_{i\in I} \partial(\breve{p_i}) f\, (0)\ \partial(p_i)g\, (0).
\]
 \end{proof}
Assume moreover that $p$ is homogenous of degree $k\in \mathbb N$. Then $\mathcal R$ is contained in the space of polynomials of degree less than or equal to $k$, and more precisely,
\[\mathcal R =\bigoplus_{\ell = 0}^k \mathcal R_\ell, \qquad \mathcal R_\ell = \mathcal R\cap \mathcal P_\ell.
\]
Set $d_\ell = \dim \mathcal R_\ell$. As the direct sum is orthogonal w.r.t. the Fischer inner product, the orthogonal basis $(p_i)_{ i\in I}$ can (and will always) be chosen in the form
\[\bigcup_{\ell=0}^k  \ (p_{\ell,i})_{1\leq i\leq d_\ell},
\]
where $(p_{\ell,i})_{1\leq i\leq d_\ell}$ is an orthonormal basis of $\mathcal R_\ell$.
\medskip

{\bf Step 2.} Let us apply these ideas to our context. Let $\mathcal W$ be the subspace of $\mathcal P(V)$ generated by all translates of $\bf det$, which is a $L$-submodule of the space $\mathcal P=\mathcal P(V)$.

Let us recall the decomposition of $\mathcal W$  into simple $L$-submodules (see \cite{FG}). 
Fix a Jordan frame $\{c_1,c_2,\dots, c_r\}$, and for each $1\leq k\leq r$, set $e_k = c_1+\dots +c_k$ and let
\[V^{(k)} = V(e_k, 1)= \{ x\in V:  e_k x = x\}.
\]
Further, let $P^{(k)}$ be the orthogonal projection on $V^{(k)}$, and let ${\bf det}^{(k)}$ be the determinant of the euclidean subalgebra $V^{(k)}$. Finally, define the \emph{principal minor} $\Delta_k$ by
\[\Delta_k(x) = {\bf det}^{(k)}\left( P^{(k)} x\right)\ .
\] 

\begin{lemma} The decomposition of $\mathcal W$ into simple $L$-modules is given by
\begin{equation}\mathcal W = \bigoplus_{k=0}^r \ \mathcal W_k
\end{equation}
where $\mathcal W_k=\mathcal P_k\cap \mathcal W$. Moreover, for $1\leq k\leq r$, $\mathcal W_k$ is the $L$-submodule generated by $\Delta_k$ and $\mathcal W_0 = \mathbb R$.

\end{lemma}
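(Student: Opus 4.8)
\emph{Proof strategy.} Set $p={\bf det}$, homogeneous of degree $r$. The plan is to present $\mathcal W$ as a graded $L$-module, identify its top graded piece by hand, and then run a downward induction on the degree using a first-order lowering relation supplied by the $L$-module structure of $\mathcal P(V)$. First I would invoke the description of $\mathcal R$ given before Proposition~\ref{propgenTaylor}: the space $\mathcal W$ spanned by the translates $p_a$, $a\in V$, equals the span of all partial derivatives $\partial(q)p$, $q\in\mathcal P$. Since $p$ is homogeneous of degree $r$, the homogeneous refinement recorded after Proposition~\ref{propgenTaylor} yields at once the grading $\mathcal W=\bigoplus_{k=0}^{r}\mathcal W_k$ with $\mathcal W_k=\mathcal W\cap\mathcal P_k$, and moreover $\mathcal W_k$ is the linear span of the derivatives $\partial(q)p$ with $q$ homogeneous of degree $r-k$; this already gives the first assertion $\mathcal W_k=\mathcal P_k\cap\mathcal W$. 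Next, letting $L$ act on $\mathcal P(V)$ by $(\ell\cdot f)(x)=f(\ell^{-1}x)$, the covariance ${\bf det}(\ell x)=\chi(\ell)\,{\bf det}(x)$ gives $\ell\cdot p_a=\chi(\ell)^{-1}p_{\ell a}$, so $\mathcal W$ is $L$-stable; since $\ell$ acts linearly it preserves degrees, hence each $\mathcal W_k$ is an $L$-submodule of $\mathcal P_k$.

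Write $\mathcal M_k$ for the $L$-submodule of $\mathcal P_k$ generated by $\Delta_k$. Here $\mathcal M_0=\mathbb R$, while $\mathcal M_r=\mathbb R\,{\bf det}$ because ${\bf det}$ is a relative invariant ($\ell\cdot{\bf det}=\chi(\ell)^{-1}{\bf det}$) and $\Delta_r={\bf det}$ (as $e_r={\bf 1}$, so $V^{(r)}=V$ and $P^{(r)}=\id$). From the decomposition theory of $\mathcal P(V)$ under $L$ (see \cite{FG}, and also \cite[Ch.~XI]{FK}) I would import two facts: each $\mathcal M_k$ is an irreducible $L$-module; and $\partial(v)\Delta_k\in\mathcal M_{k-1}$ for every $v\in V$ and $1\le k\le r$ (the gradient of the $k$-th principal minor is a combination of translates of the $(k-1)$-th). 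Using the equivariance $\partial(v)(\ell\cdot f)=\ell\cdot\partial(\ell^{-1}v)f$, the second fact upgrades to $\partial(V)\mathcal M_k\subseteq\mathcal M_{k-1}$; since $\mathcal M_k$ contains non-constant polynomials for $k\ge 1$, the submodule $\partial(V)\mathcal M_k$ of the irreducible $\mathcal M_{k-1}$ is nonzero, so $\partial(V)\mathcal M_k=\mathcal M_{k-1}$. On the other hand, because a derivative of $p$ of order $r-k+1$ is a first-order derivative of one of order $r-k$ and $\mathcal P_1\cdot\mathcal P_{r-k}=\mathcal P_{r-k+1}$, one has $\partial(V)\mathcal W_k=\mathcal W_{k-1}$ for $1\le k\le r$.

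Finally I would match the two chains by downward induction on $k$: the base case is $\mathcal W_r=\mathbb R\,{\bf det}=\mathcal M_r$, and if $\mathcal W_k=\mathcal M_k$ then $\mathcal W_{k-1}=\partial(V)\mathcal W_k=\partial(V)\mathcal M_k=\mathcal M_{k-1}$. Hence $\mathcal W_k=\mathcal M_k$ for all $0\le k\le r$, which says precisely that $\mathcal W_k$ is the $L$-submodule generated by $\Delta_k$ and that $\mathcal W_0=\mathbb R$, as claimed. The genuinely nonelementary input here is the pair of imported facts — irreducibility of the $\mathcal M_k$ and the lowering statement $\partial(v)\Delta_k\in\mathcal M_{k-1}$ — and I expect the latter (equivalently, the Pieri-type rule that differentiating the module generated by the $k$-th principal minor produces exactly the one generated by the $(k-1)$-th) to be the main point, the place where the structure theory of \cite{FG} is really used; everything else is bookkeeping with the Fischer-type Taylor expansion of Step~1. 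As a sanity check, in the model where $V$ is the Jordan algebra of real symmetric $r\times r$ matrices this reduces to the classical facts that the partial derivatives of the determinant are the complementary minors and that the $j\times j$ minors span an irreducible $\GL_r(\mathbb R)$-module.
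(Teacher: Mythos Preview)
The paper does not give its own proof of this lemma; it is stated as a recall of a known decomposition with a bare citation to \cite{FG}. Your argument is correct and effectively supplies the missing details: the grading and $L$-stability of $\mathcal W$ are straightforward from the Fischer--Taylor description in Step~1 together with the relative invariance of ${\bf det}$, and the downward induction matching $\mathcal W_k$ with the $L$-module $\mathcal M_k$ generated by $\Delta_k$ is clean once you have the two imported structural facts (irreducibility of $\mathcal M_k$ and the lowering relation $\partial(V)\mathcal M_k\subseteq\mathcal M_{k-1}$). You are right to flag that the lowering relation is the substantive input; it is indeed the Pieri-type statement for the fundamental $L$-modules $\mathcal P_{(1^k,0^{r-k})}$ available in \cite{FK}, Chapter~XI (and implicit in \cite{FG}), so your citations are on target and consistent with the paper's own reference.
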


The remarkable fact is that the primary decomposition of $\mathcal W$ in homogeneous components coincides with the $L$-decomposition in irreducibles. It has very important consequences.

Let us apply Proposition \ref{propgenTaylor} to the polynomial $\bf det $ and to $\mathcal W$. For each $ 0\leq k\leq r$, let $d_k=\dim \mathcal W_k$ and
choose an orthonormal basis $\big(p_{k,i}\big)_{1\leq i\leq d_k}$ of $\mathcal W_k$ for the Fischer inner product. Then 
\begin{equation}\label{basisdet}
\bigcup_{k=0}^r \big(p_{k,i}\big)_{1\leq i\leq d_k}
\end{equation} 
is an orthonormal basis of $\mathcal W$.

More generally, let $p\in \mathcal W_k$ for some $ 1\leq k\leq r$, and let $\mathcal R(p)$ be the subspace generated by all translates of $p$. As $p$ is homogeneous, $\mathcal W(p)$ splits as
\[\mathcal W(p) = \bigoplus_{\ell=0}^k\  \left(\mathcal W(p)\cap \mathcal P_\ell\right).
\]
As $\mathcal W(p)\cap \mathcal P_\ell\subset \mathcal W\cap \mathcal P_\ell = \mathcal W_\ell$, it is possible to choose an orthonormal basis of $\mathcal W(p)$ of the following form
\begin{equation}\label{basisp}
\bigcup_{\ell = 0}^k \big(q_{\ell, i} \big)_{1\leq i\leq \dim (\mathcal W({p})\cap \mathcal P_\ell)}
\end{equation}
where $q_{\ell,i}\in \mathcal W_\ell$.

\smallskip

{\bf Step 3.} 
We are now ready for the proof of Theorem \ref{Dst}. The last key ingredient which is needed is a consequence (not explicitly stated in \cite{bck1}) of results obtained by Faraut and Kor\'anyi (see Proposition  VII.1.5 and Proposition XI.5.1 in \cite{FK}).

\begin{proposition}
Let $p\in \mathcal W_k.$ We have 
\begin{itemize}
\item[{i)}] The expression
\begin{equation}p^\sharp(x) = p(x^{-1}) {\bf det} (x),
\end{equation}
a priori defined for $x$ invertible, extends to $V$ as a polynomial, which belongs to $\mathcal W_{r-k}$.
\item[{ii)}] For any $\lambda\in \mathbb R$, and for $x\in V$ invertible,
\begin{equation}\label{BSpk}
p\left(\frac{\partial}{\partial x} \right) {\bf det}( x)^{\lambda+1} = \prod_{j=1}^k\left( (\lambda+1+\frac{d}{2} (j-1)\right) p^\sharp(x) {\bf det}( x)^\lambda.
\end{equation} 
\end{itemize}
\end{proposition}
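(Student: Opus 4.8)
The plan is to establish the two assertions separately, using the structural results about the $L$-module $\mathcal{W}$ recalled in Steps 2 and the Faraut--Kor\'anyi results cited at the end.

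\medskip

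\noindent\textbf{Part i).} First I would prove that $p^\sharp(x) := p(x^{-1}){\bf det}(x)$ extends to a polynomial on $V$ of degree $r-k$. The key observation is that the inversion map $x\mapsto x^{-1}$ together with the Hua-type covariance relation \eqref{covdet} (specialized to $g=\iota$, $y=0$) governs how $\mathcal{P}(V)$ transforms. Concretely, one checks that for $p\in\mathcal{W}$ (a translate of ${\bf det}$, or a limit of such), the function $x\mapsto p(x^{-1}){\bf det}(x)$ is again a translate of ${\bf det}$: indeed if $p(x)={\bf det}(x+a)$ then, using the algebraic identity ${\bf det}(x^{-1}+a) = {\bf det}(x)^{-1}{\bf det}({\bf 1}+ax)$ (valid by the theory of the Jordan determinant, cf. \cite[Ch.~II]{FK}), one gets $p^\sharp(x) = {\bf det}({\bf 1}+ax)$, a polynomial of degree $\le r$. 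Hence the linear map $p\mapsto p^\sharp$ preserves $\mathcal{W}$. Moreover this map intertwines the $L$-action with the twist by the character $\chi$ (coming from $\iota$), and because the homogeneous decomposition $\mathcal{W}=\bigoplus_k \mathcal{W}_k$ coincides with the decomposition into simple $L$-modules (the Lemma in Step 2), the map $p\mapsto p^\sharp$ must send each simple summand $\mathcal{W}_k$ to a single simple summand. Since $\sharp$ is clearly an involution on $\mathcal{W}$ up to scalars and sends the degree-$0$ piece $\mathbb{R}\cdot 1$ to $\mathbb{R}\cdot{\bf det}=\mathcal{W}_r$, and ${\bf det}^\sharp = 1$, a dimension/degree bookkeeping forces $(\mathcal{W}_k)^\sharp = \mathcal{W}_{r-k}$. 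This gives i).

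\medskip

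\noindent\textbf{Part ii).} For the Bernstein--Sato type identity \eqref{BSpk}, the strategy is to reduce to the case of principal minors $\Delta_k$, where the formula is exactly Proposition~VII.1.5 of \cite{FK}: one has $\Delta_k(\partial/\partial x){\bf det}(x)^{\lambda+1} = \prod_{j=1}^k(\lambda+1+\tfrac{d}{2}(j-1))\,\Delta_k^\sharp(x)\,{\bf det}(x)^\lambda$, with $\Delta_k^\sharp = {\bf det}/\Delta_{r-k}'$ the appropriate complementary minor (up to the $L$-action normalization). To pass from $\Delta_k$ to a general $p\in\mathcal{W}_k$, I would use that $\mathcal{W}_k$ is the simple $L$-module generated by $\Delta_k$, so $p$ is a linear combination of the $L$-translates $\ell\cdot\Delta_k = \Delta_k\circ\ell^{-1}$, $\ell\in L$. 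The operator $q\mapsto q(\partial/\partial x)$ is $L$-equivariant in the appropriate sense (conjugation by $\ell$ on constant-coefficient operators corresponds to the contragredient action on symbols), and ${\bf det}(x)^{\lambda+1}$ transforms by a scalar under $L$ via the character $\chi^{\lambda+1}$; hence the whole identity \eqref{BSpk} is $L$-equivariant, and the scalar factor $\prod_{j=1}^k(\lambda+1+\tfrac{d}{2}(j-1))$—being $L$-invariant—is the same for all $p\in\mathcal{W}_k$. Therefore it suffices to verify it on the single generator $\Delta_k$, which is the quoted Faraut--Kor\'anyi result. One also needs to check that the $\sharp$ operation constructed in i) is the one appearing on the right-hand side, i.e. that $(\ell\cdot p)^\sharp = \chi(\ell)\,(\ell^{-\top}\cdot p^\sharp)$ or the analogous twist, which follows from the covariance of ${\bf det}$ under inversion; this pins down $p^\sharp$ consistently with the computation on $\Delta_k$ (where $\Delta_k^\sharp$ is a complementary minor, cf. Proposition~XI.5.1 of \cite{FK}).

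\medskip

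\noindent\textbf{Main obstacle.} The routine parts are the algebraic identity for ${\bf det}(x^{-1}+a)$ and the scalar computation. The delicate point—where care is genuinely needed—is the $L$-equivariance bookkeeping in Part ii): making precise in which representation $q\mapsto q(\partial/\partial x)$ is equivariant, tracking the character twists so that both sides of \eqref{BSpk} live in the same $L$-isotypic line, and checking that the $\sharp$ of Part i) is compatible with the classical formula for $\Delta_k^\sharp$. Once the equivariance is set up correctly, invariance of the scalar factor under the transitive-on-generators $L$-action makes the reduction to $\Delta_k$ immediate. The fact that the homogeneous decomposition of $\mathcal{W}$ coincides with its decomposition into $L$-irreducibles (the Lemma of Step 2) is what makes this reduction possible and is the conceptual heart of the argument.
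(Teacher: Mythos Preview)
The paper does not actually prove this proposition: it simply records it as ``a consequence (not explicitly stated in \cite{bck1}) of results obtained by Faraut and Kor\'anyi (see Proposition~VII.1.5 and Proposition~XI.5.1 in \cite{FK})'' and then uses it. Your proposal is a correct and natural fleshing-out of how the statement is extracted from those references, via exactly the mechanism the paper implicitly has in mind---namely, that $\mathcal W_k$ is the simple $L$-module generated by $\Delta_k$, so that the $L$-equivariance of both sides of \eqref{BSpk} reduces the verification to the single generator $\Delta_k$, where it is the cited Faraut--Kor\'anyi formula.

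One minor caution on Part~i): the identity you write as ${\bf det}(x^{-1}+a)={\bf det}(x)^{-1}{\bf det}({\bf 1}+ax)$ is heuristic, since in a Jordan algebra there is no associative product $ax$; the rigorous version involves the quadratic representation $P$ (e.g.\ ${\bf det}(x^{-1}+a)\,{\bf det}(x)={\bf det}({\bf 1}+P(x^{1/2})a)$ on the cone, cf.\ \cite[Ch.~II--III]{FK}). This does not affect your argument, whose real content is the $L$-module bookkeeping you correctly identify as the delicate point.
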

By letting $\lambda = 0$ in \eqref{BSpk}, notice that $p^\sharp$ is proportional to
$\breve{p}= \partial(p){\bf det}$.

To motivate the reader, let us formulate and prove a close but simpler result, which will be used later. 

\begin{proposition}\label{defcst} There exists a polynomial $c_{\lambda,\mu}$ on $V\times V$ such that, for any $x,y\in \Omega$
\begin{equation}
{\bf det}\left(\frac{\partial}{\partial  x} - \frac{\partial}{\partial y} \right) {\bf det}( x)^{s+1} {\bf det}( y)^{t+1} = c_{\lambda,\mu}(x,y){\bf det}( x)^s {\bf det}( y)^t\ .
\end{equation}
The polynomial $c_{\lambda,\mu}$ is homogeneous of degree $r$. Its coefficents depend polynomially on $s,t$.
\end{proposition}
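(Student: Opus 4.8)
The plan is to apply the generalized Taylor/Leibniz formula of Proposition \ref{propgenTaylor} to the polynomial $p = {\bf det}$ and the module $\mathcal W = \mathcal W(\mathbf{det})$, then combine it with the Bernstein--Sato identity \eqref{BSpk}. First I would expand, using \eqref{genLeibniz} with $p = {\bf det}$ acting on the product of functions of $x$ and of $y$ (here the operator $\mathbf{det}(\partial/\partial x - \partial/\partial y)$ is the two-variable incarnation of $\partial(\mathbf{det})$ on $V \times V$ restricted to the antidiagonal direction): choosing an orthonormal basis $\bigcup_{k=0}^r (p_{k,i})_{1\le i\le d_k}$ of $\mathcal W$ adapted to the homogeneous decomposition as in \eqref{basisdet}, one gets
\[
{\bf det}\!\left(\frac{\partial}{\partial x}-\frac{\partial}{\partial y}\right)\big(F(x)G(y)\big)
= \sum_{k=0}^r \sum_{i=1}^{d_k} (-1)^{?}\,\partial(\breve{p}_{k,i})F(x)\ \partial(p_{k,i})G(y),
\]
where the sign bookkeeping comes from the substitution $\partial/\partial y \mapsto -\partial/\partial y$ and the homogeneity degree $k$ of $p_{k,i}$; I would track this carefully but it is routine. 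Applying this with $F(x) = {\bf det}(x)^{s+1}$ and $G(y) = {\bf det}(y)^{t+1}$ is the key move.

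Next I would invoke \eqref{BSpk}: for $p_{k,i} \in \mathcal W_k$,
\[
\partial(p_{k,i})\,{\bf det}(x)^{s+1} = \prod_{j=1}^k\Big(s+1+\tfrac{d}{2}(j-1)\Big)\, p_{k,i}^\sharp(x)\,{\bf det}(x)^{s},
\]
and similarly $\partial(\breve{p}_{k,i})\,{\bf det}(x)^{s+1}$ is handled by first noting $\breve{p}_{k,i} = \partial(p_{k,i}){\bf det} \in \mathcal W_{r-k}$ (by the remark after \eqref{BSpk}, $\breve{p}_{k,i}$ is proportional to $p_{k,i}^\sharp$), so that $\partial(\breve p_{k,i}){\bf det}(x)^{s+1}$ again produces a product of linear factors in $s$ times $(\breve p_{k,i})^\sharp(x)\,{\bf det}(x)^s$. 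Substituting these into the expansion, every term acquires the common factor ${\bf det}(x)^s{\bf det}(y)^t$, and what remains is
\[
c_{\lambda,\mu}(x,y) = \sum_{k=0}^r \sum_{i=1}^{d_k} (\pm 1)\Big[\textstyle\prod_{j}(\cdots)\Big]_s\Big[\textstyle\prod_{j}(\cdots)\Big]_t\,(\breve p_{k,i})^\sharp(x)\,p_{k,i}^\sharp(y),
\]
a genuine polynomial in $(x,y)$ whose coefficients are polynomials in $(s,t)$ — note I would rename $(s,t)$ consistently, the statement's $c_{\lambda,\mu}$ should read $c_{s,t}$. Homogeneity of degree $r$ follows by counting degrees: $(\breve p_{k,i})^\sharp$ has degree $r-(r-k)=k$ and $p_{k,i}^\sharp$ has degree $r-k$, summing to $r$; alternatively it is immediate since the operator lowers total homogeneity degree by $r$ and $\mathbf{det}(x)^{s+1}\mathbf{det}(y)^{t+1}$ formally has ``degree'' $r(s+t+2)$ while the right side has ``degree'' $r(s+t)$ plus $\deg c$.

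The main obstacle I anticipate is twofold. First, applying \eqref{genLeibniz} requires the operator $\mathbf{det}(\partial/\partial x - \partial/\partial y)$ on $V\times V$ to be correctly identified as $\partial({\bf det})$ for the polynomial $v \mapsto {\bf det}(v)$ pulled back along the map $(x,y)\mapsto x-y$; making the ``two-variable Leibniz'' precise means checking that the translates of this pulled-back polynomial on $V\times V$ are spanned by $\{\breve p_{k,i}(x)\,p_{k,i}(y)\}$, which is exactly \eqref{genTaylor} applied on $V$ composed with the substitution $y \rightsquigarrow -y$ — conceptually clean but needs the sign/homogeneity accounting done right. Second, and more substantively, \eqref{BSpk} as stated gives $\partial(p){\bf det}^{\lambda+1}$ in terms of $p^\sharp$, but I need it for $p = \breve p_{k,i}$ which lives in $\mathcal W_{r-k}$, so the product of linear factors has $r-k$ terms with shifted arguments; and I must confirm that $(\breve p_{k,i})^\sharp$, a priori a polynomial by part i) of that Proposition, is what appears — this is a direct application of the Proposition to the polynomial $\breve p_{k,i}$, so no new input is needed, but the interplay of the two sharp-operations ($p \mapsto p^\sharp$ and $p \mapsto \breve p$) and the constant $\prod_j$ factors is where care is essential. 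Everything else is bookkeeping.
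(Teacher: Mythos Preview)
Your proposal is correct and follows essentially the same route as the paper: expand $\mathbf{det}(\partial_x-\partial_y)$ via the Taylor formula \eqref{genTaylor} for $\mathbf{det}(x-y)$ in the basis \eqref{basisdet} (the paper does this directly rather than via the Leibniz formulation you start with, but you yourself note in your ``obstacles'' paragraph that it reduces to exactly this), then apply the Bernstein--Sato identity \eqref{BSpk} to each factor $p_{k,i}(\partial_x)\mathbf{det}(x)^{s+1}$ and $\breve p_{k,i}(\partial_y)\mathbf{det}(y)^{t+1}$. Your observation that the notation should be $c_{s,t}$ rather than $c_{\lambda,\mu}$ is also well taken.
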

\begin{proof} By Proposition \ref{propgenTaylor} $i)$, and for the basis $\big( p_{k,i}\big)_{0\leq k\leq r, 1\leq i\leq d_k}$ of $\mathcal W$ presented in \eqref{basisdet}
\begin{equation}\label{det(x-y)}
{\bf det}(x-y) = \sum_{k=0}^r (-1)^{r-k} \sum_{i=1}^{d_k}p_{k,i}(x) \breve{p}_{k,i}(y).
\end{equation}
Hence
\begin{multline} {\bf det}\left(\frac{\partial}{\partial  x} - \frac{\partial}{\partial y} \right) {\bf det}( x)^{s+1} {\bf det}( y)^{t+1} \\
= \sum_{k=1}^r (-1)^{r-k} \sum_{i=1}^{d_k} p_{k,i}\left(\frac{\partial}{\partial x}\right) {\bf det}( x)^{s+1}\breve{p}_{k,i}\left( \frac{\partial}{\partial y}\right){\bf det}( y)^{t+1}\ .
\end{multline}
Use \eqref{BSpk} to compute $p_{k,i}\left(\frac{\partial}{\partial x}\right) {\bf det}( x)^{s+1}$ and $\breve{p}_{k,i}\left( \frac{\partial}{\partial y}\right){\bf det}( y)^{t+1}$. The conclusion follows easily.
\end{proof}
The proof of Theorem \ref{Dst} follows similar lines. Let $f$ be a smooth function  on $V\times V$. Using again \eqref{det(x-y)}, we are reduced to calculate, for $0\leq k\leq r$ and $1\leq i\leq d_k,$
\begin{equation}\label{full}
p_{k,i}\left(\frac{\partial}{\partial x}\right)\left( {\bf det}( x)^s\ \left(\breve{p}_{k,i} \left(\frac{\partial}{\partial y}\right) {\bf det}( y)^t f(x,y)\right)\right).
\end{equation}
Compute first the inner expression
\begin{equation}\label{inner}
\breve{p}_{k,i} \left(\frac{\partial}{\partial y}\right) {\bf det}( y)^t f(x,y),
\end{equation}
for which we use Proposition \ref{propgenTaylor} ii), applied to $\breve{p}_{k,i}\in \mathcal W_{r-k}$ and the basis of $\mathcal R(\breve{p}_{k,i})$ presented in \eqref{basisp}. Hence the inner term \eqref{inner} is equal to
\[{\bf det}( y)^{t-1} d_{k,i,t} \left(y,\frac{\partial}{\partial y} \right)f(x,y)
\]
where $d_{k,i,t}$ is a polynomial on $V\times V$. It remains to perform the derivation with respect to the variable $x$, which is treated in a similar manner, showing that  the term \eqref{full} is equal to 
\[{\bf det}( x)^{s-1} {\bf det}( y)^{t-1} d_{k,i,s,t}\left(x,y, \frac{\partial}{\partial x}, \frac{\partial}{\partial y}\right) f(x,y),
\]
where $d_{k,i,s,t}$ is a polynomial on $V\times V\times V\times V$. It remains to sum the terms to finish the proof of Theorem \ref{Dst}.

The results obtained for euclidean Jordan algebras (i.e. type I) are easily extended to other cases, first to type IV by an appropriate complexification of the main identity \eqref{Fourierversion}, and then by elementary tricks to yield the result for type II and type III. We again refer to \cite{bck1} for details.

\begin{example}\label{pq}  Recall from Section 3 the example $V=\mathbb R^{p,q},$  where $${\bf det}(x)=P(x)=x_1^2+\cdots + x_p^2-x_{p+1}^2-\cdots -x_{n}^2.$$
Denote by $P(x,y)$ the corresponding symmetric bilinear form on $V\times V.$ In this example, the differential operator $D_{s,t}$ is given explicitly by 
\begin{equation}\label{DstRpq} 
\begin{array} {rcl}
D_{s,t} &=& \displaystyle  P(x)P(y)\,P \left(\frac{\partial}{\partial x} -\frac{\partial}{\partial y}\right)\\
&&\displaystyle  +4s\, P(y) \sum_{i=1}^n x_j\left( \frac{\partial }{\partial x_j}-  \frac{\partial }{\partial y_j}\right)+4t\,P(x)\sum_{j=1}^n y_j\left( \frac{\partial }{\partial y_j}-  \frac{\partial }{\partial x_j}\right)\\
&&\displaystyle  +2t(2t-2+n) P(x)-8stP(x,y) +2s(2s-2+n) P(y).
\end{array}
\end{equation}
\end{example}

Let us state  the (slightly more precise) version of Theorem \ref{Flambdamu} in the noncompact picture.

\begin{theorem} The differential operators $F_{\lambda, \mu}$ have polynomials coefficients on $V\times V$, and they depend polynomially on $(\lambda,\mu)\in \mathbb C\times \mathbb C$.  For any $g\in G$,
\begin{equation}
F_{\lambda,\mu}\circ \big(\pi_{\lambda,\varepsilon}(g)\otimes (\pi_{\mu, \eta} \big)(g)=\big(\pi_{\lambda+1,-\varepsilon}(g)\otimes \pi_{\mu+1,-\eta}(g)\big)\circ F_{\lambda,\mu}\\ .
\end{equation} 
\end{theorem}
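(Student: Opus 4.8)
The plan is to deduce this final theorem from Theorem \ref{Dst} (the Fourier-side identity) together with the formula \eqref{weakdef} relating $F_{(\lambda,\varepsilon),(\mu,\eta)}$ to the composition of $M$ and the Knapp--Stein operators. First I would recall that in the noncompact picture the Knapp--Stein operator $I_{\lambda,\varepsilon}$ is, up to a normalizing meromorphic factor, convolution on $V$ by the distribution ${\bf det}(x)^{-\frac{2n}{r}+\lambda,\varepsilon}$, so that conjugating by the Fourier transform turns $I_{\lambda,\varepsilon}\otimes I_{\mu,\eta}$ into multiplication by (a normalized form of) $\widehat{{\bf det}(\xi)^{-\frac{2n}{r}+\lambda,\varepsilon}}\,\widehat{{\bf det}(\zeta)^{-\frac{2n}{r}+\mu,\eta}}$, while $M$ becomes the constant-coefficient operator ${\bf det}\!\left(\frac{\partial}{\partial\xi}-\frac{\partial}{\partial\zeta}\right)$. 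Using the explicit Fourier transforms of the zeta distributions computed in Section~3 (which express $\widehat{{\bf det}(\xi)^{\sigma,\varepsilon}}$ as a meromorphic scalar times ${\bf det}(\xi)^{-\sigma-\frac{n}{r},\pm}$, possibly after passing to the $Z_\pm$ basis), the left-hand side of \eqref{weakdef} becomes, on the Fourier side, exactly the left-hand side of \eqref{Fourierversion} with $s,t$ a suitable affine reparametrization of $\lambda,\mu$. Theorem~\ref{Dst} then rewrites this as ${\bf det}(\xi)^{s-1,-\varepsilon}{\bf det}(\zeta)^{t-1,-\eta}\circ D_{s,t}$, which, transported back through the Fourier transform, is precisely $(I_{\lambda+1,-\varepsilon}\otimes I_{\mu+1,-\eta})\circ(\text{differential operator with polynomial coefficients})$ up to the meromorphic scalar $\kappa(\lambda,\mu)$; since the Knapp--Stein operators are generically invertible, $F_{\lambda,\mu}$ is forced to equal that differential operator.

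Next I would establish the two structural assertions. Polynomial coefficients: the source operator $D_{s,t}$ produced in the proof of Theorem~\ref{Dst} has coefficients that are polynomials in $(s,t)$, and the reparametrization $s,t\mapsto \lambda,\mu$ is affine, so $F_{\lambda,\mu}$ inherits polynomial dependence on $(\lambda,\mu)$ after the normalization absorbing $\kappa(\lambda,\mu)$ and the scalar factors coming from the zeta functional equations. Independence of the coefficients from $(\varepsilon,\eta)$ is already noted in the text and follows because $D_{s,t}$ in Theorem~\ref{Dst} does not involve the signs; only the line-bundle twists at source and target carry the sign data, and those are accounted for by the identity \eqref{Fourierversion} itself.

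Finally, the covariance relation. Here I would simply invoke that $F_{(\lambda,\varepsilon),(\mu,\eta)}$ was \emph{constructed} as the composite $(I_{\frac{2n}{r}-\lambda-1,-\varepsilon}\otimes I_{\frac{2n}{r}-\mu-1,-\eta})\circ M\circ(I_{\lambda,\varepsilon}\otimes I_{\mu,\eta})$ in \eqref{defFlambdamu}, and each factor intertwines the relevant representations: $I_{\lambda,\varepsilon}\otimes I_{\mu,\eta}$ intertwines $\pi_{\lambda,\varepsilon}\otimes\pi_{\mu,\eta}$ with $\pi_{\frac{2n}{r}-\lambda,\varepsilon}\otimes\pi_{\frac{2n}{r}-\mu,\eta}$, then $M$ intertwines with $\pi_{\frac{2n}{r}-\lambda-1,-\varepsilon}\otimes\pi_{\frac{2n}{r}-\mu-1,-\eta}$, and the second pair of Knapp--Stein operators brings this to $\pi_{\lambda+1,-\varepsilon}\otimes\pi_{\mu+1,-\eta}$ (using $\frac{2n}{r}-(\frac{2n}{r}-\lambda-1)=\lambda+1$). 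Composing the three intertwining identities gives the stated $G$-covariance for generic $(\lambda,\mu)$, and it extends to all $(\lambda,\mu)$ by meromorphic continuation once we know $F_{\lambda,\mu}$ is a differential operator depending polynomially on the parameters, since a differential operator is determined by its action on a dense set and the identity is an identity of rational functions in $(\lambda,\mu)$.

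The main obstacle I anticipate is the bookkeeping in the first paragraph: tracking the normalizing meromorphic factors through the Fourier transforms of the zeta distributions — especially for types~I and~II where one must work in the $2\times 2$ system $(Z_+,Z_-)$ and the matrix $\mathbf{A}(s)$ appears — and verifying that after absorbing all these scalars into a single meromorphic normalization the resulting operator still depends \emph{polynomially} (not merely meromorphically) on $(\lambda,\mu)$. This requires checking that the scalar poles on the two sides of \eqref{weakdef} match up so that $\kappa(\lambda,\mu)$ can be chosen to cancel them, leaving a polynomial-coefficient differential operator; this is exactly where the precise form of the functional equations from Section~3 is used, and for the non-Euclidean types the reduction ``by complexification and elementary tricks'' indicated after Theorem~\ref{Dst} must be carried along in parallel.
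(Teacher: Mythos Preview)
Your proposal is correct and follows essentially the same route as the paper: the final theorem is just the noncompact-picture restatement of Theorem~\ref{Flambdamu}, and you correctly deduce it by combining the construction \eqref{defFlambdamu} (which gives covariance by composing intertwiners), the Fourier-side reformulation \eqref{weakdef}, and Theorem~\ref{Dst} (which supplies the differential operator $D_{s,t}$ with polynomial coefficients depending polynomially on the parameters). Your identification of the main bookkeeping obstacle---matching the meromorphic scalars through the zeta functional equations of Section~3, especially in the $2\times 2$ systems for types~I and~II---is also exactly where the paper's work lies.
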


%%%
\section{Construction of the generalized Rankin-Cohen operators}
Let $\operatorname{res} : \mathcal{S}(V \times V) \longrightarrow \mathcal{S}(V)$ be the restriction operator from $V \times V$ to $\operatorname{diag}(V)=\{(x, x), x \in V\} \simeq V$ given by
$$
\operatorname{res}(f)(x)=f(x, x), \quad \text { for } x \in V .
$$
One can easily prove that  for $(\lambda, \varepsilon),(\mu, \eta) \in \mathbb{C} \times\{\pm\}$, and for any $g \in G$
\begin{equation}
\operatorname{res} \circ\left(\pi_{\lambda, \varepsilon}(g) \otimes \pi_{\mu, \eta}(g)\right )=\pi_{\lambda+\mu, \varepsilon \eta}(g) \circ \text {res }
\end{equation}

For any positive integer $k$, let
$$
F_{\lambda, \mu}^{(k)}=F_{\lambda+k-1, \mu+k-1} \circ \cdots \circ F_{\lambda, \mu}
$$
and
$$
B_{\lambda, \mu}^{(k)}=\operatorname{res} \circ F_{\lambda, \mu}^{(k)}
$$

The covariance property of the operators $F_{\lambda, \mu}$ and of res imply the following statement.

\begin{proposition}
For all $(\lambda, \varepsilon)$ and $(\mu, \eta)$ in $\mathbb{C} \times\{\pm\}$ and for all $k$ in $\mathbb{N}^*$, the operators $B_{\lambda, \mu}^{(k)}$ are covariant with respect to 
$\left(\pi_{\lambda, \varepsilon} \otimes \pi_{\mu, \eta}, \pi_{\lambda+\mu+2 k, \varepsilon \eta}\right)$, i.e. for any $g\in G$,
\begin{equation}
B_{\lambda, \mu}^{(k)} \circ\left(\pi_{\lambda, \varepsilon}(g) \otimes \pi_{\mu, \eta}(g)\right)=\pi_{\lambda+\mu+2 k, \varepsilon \eta}(g) \circ B_{\lambda, \mu}^{(k)}.
\end{equation}

 \end{proposition}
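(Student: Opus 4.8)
The plan is to deduce the covariance of $B_{\lambda,\mu}^{(k)}$ from the two covariance statements already available: the one for the source operator $F_{\lambda,\mu}$ (stated in Theorem at the end of Section 4) and the one for the restriction operator $\operatorname{res}$ (the displayed identity preceding the proposition). The argument is purely formal composition-chasing, so no hard analysis is involved; the only care needed is bookkeeping of the parameter shifts $(\lambda,\varepsilon)\mapsto(\lambda+1,-\varepsilon)$ produced by each application of $F$.

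First I would establish by induction on $k$ that the iterated operator $F_{\lambda,\mu}^{(k)}=F_{\lambda+k-1,\mu+k-1}\circ\cdots\circ F_{\lambda,\mu}$ is covariant with respect to $\big(\pi_{\lambda,\varepsilon}\otimes\pi_{\mu,\eta},\,\pi_{\lambda+k,(-1)^k\varepsilon}\otimes\pi_{\mu+k,(-1)^k\eta}\big)$. The base case $k=1$ is exactly the covariance of $F_{\lambda,\mu}$. For the inductive step, one composes the covariance of $F_{\lambda,\mu}^{(k-1)}$ (mapping into parameters $(\lambda+k-1,(-1)^{k-1}\varepsilon)$ and $(\mu+k-1,(-1)^{k-1}\eta)$) with the covariance of the single operator $F_{\lambda+k-1,\mu+k-1}$ applied at those parameters; the sign indices match because $F$ flips both signs, turning $(-1)^{k-1}$ into $(-1)^k$, and this is precisely why the construction was set up with alternating signs. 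Thus for any $g\in G$,
\[
F_{\lambda,\mu}^{(k)}\circ\big(\pi_{\lambda,\varepsilon}(g)\otimes\pi_{\mu,\eta}(g)\big)=\big(\pi_{\lambda+k,(-1)^k\varepsilon}(g)\otimes\pi_{\mu+k,(-1)^k\eta}(g)\big)\circ F_{\lambda,\mu}^{(k)}.
\]

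Next I would apply $\operatorname{res}$ on the left. By the covariance property of $\operatorname{res}$ with $(\lambda,\varepsilon)$ replaced by $(\lambda+k,(-1)^k\varepsilon)$ and $(\mu,\eta)$ replaced by $(\mu+k,(-1)^k\eta)$, one has
\[
\operatorname{res}\circ\big(\pi_{\lambda+k,(-1)^k\varepsilon}(g)\otimes\pi_{\mu+k,(-1)^k\eta}(g)\big)=\pi_{(\lambda+k)+(\mu+k),\,((-1)^k\varepsilon)((-1)^k\eta)}(g)\circ\operatorname{res}.
\]
Here the exponent pairing simplifies: $((-1)^k\varepsilon)((-1)^k\eta)=(-1)^{2k}\varepsilon\eta=\varepsilon\eta$, and $(\lambda+k)+(\mu+k)=\lambda+\mu+2k$, so the target representation is exactly $\pi_{\lambda+\mu+2k,\varepsilon\eta}$. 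Composing the two displayed identities and using $B_{\lambda,\mu}^{(k)}=\operatorname{res}\circ F_{\lambda,\mu}^{(k)}$ gives
\[
B_{\lambda,\mu}^{(k)}\circ\big(\pi_{\lambda,\varepsilon}(g)\otimes\pi_{\mu,\eta}(g)\big)=\pi_{\lambda+\mu+2k,\varepsilon\eta}(g)\circ B_{\lambda,\mu}^{(k)},
\]
which is the claim.

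There is no real obstacle here — the statement is a consequence of functoriality of covariance under composition of intertwining operators. The only point that requires attention, and the thing I would make sure to spell out, is the sign bookkeeping: each $F$ toggles both $\varepsilon$ and $\eta$, so after $k$ steps the signs are $(-1)^k\varepsilon$ and $(-1)^k\eta$, and it is crucial that $\operatorname{res}$ multiplies the two signs together, causing the $(-1)^{2k}$ to disappear. This is why the final target sign is $\varepsilon\eta$ regardless of the parity of $k$, matching the statement of the proposition.
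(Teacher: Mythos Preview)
Your proof is correct and follows exactly the approach the paper indicates: the sentence immediately preceding the proposition says ``The covariance property of the operators $F_{\lambda, \mu}$ and of res imply the following statement,'' and you have simply spelled out that implication in full detail, including the sign bookkeeping that the paper leaves implicit.
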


 The operators $B_{\lambda, \mu}^{(k)}$ generalize the classical Rankin-Cohen operators, or more precisely their real valued version on $\mathbb R\times \mathbb R$.  
 \begin{example}  Assume that  $V=\mathbb R^{p,q}$. 
Using the same notation as in Example \ref{pq}, the source operator is given by 
$$
\begin{array}{rl}
\displaystyle  F_{\lambda,  \mu  }
 &=  \displaystyle-P(x-y) P\left(\frac{\partial}{\partial x}\right) P\left(\frac{\partial}{\partial y}\right) 
\\
& \displaystyle\quad+4(-\lambda+\frac{n}{2}-1) \sum_{j=1}^n (x_j-y_j)\frac{\partial}{\partial x_j}P\left(\frac{\partial}{\partial y}\right)  + 4(-\mu+\frac{n}{2}-1) \sum_{j=1}^n (y_j-x_j)\frac{\partial}{\partial x_j}P\left(\frac{\partial}{\partial x}\right)
\\
&\displaystyle\quad 
+4\lambda (-\lambda+\frac{n}{2}-1)  P\left(\frac{\partial}{\partial y} \right)
+4\mu (-\mu+\frac{n}{2}-1)  P\left(\frac{\partial}{\partial x} \right)
\\
& \displaystyle\quad+8(-\lambda+\frac{n}{2}-1)(-\mu+\frac{n}{2}-1) P\left(\frac{\partial}{\partial x}, \frac{\partial}{\partial y}\right).
\end{array}$$
In particular, 
\begin{multline*}
B^{(1)}_{\lambda, \mu}=4\, \operatorname{res}\Big\{
\mu (-\mu+\frac{n}{2}-1) P\left(\frac{ \partial}{\partial x}\right) +\lambda (-\lambda+\frac{n}{2}-1)  P\left( \frac{ \partial}{\partial y}\right) \\
+2(-\lambda+\frac{n}{2}-1)(-\mu+\frac{n}{2}-1) P\left(\frac{ \partial}{\partial x}, \frac{ \partial}{\partial y}\right)\Big\}.
\end{multline*}
See \cite[Section 9]{bck1} for more details.

\end{example}

To get a more explicit expression of these operators, we need to extend to bi-differential operators the classical notion of symbols. For the present situation, it will be enough to consider differential operators with polynomials coefficients.
Let $B\colon C^{\infty}(V \times V) \longrightarrow C^{\infty}(V)$  be a bi-differential operator with polynomial coefficients. Define its symbol $b(x,\xi,\eta)$ for $x\in V, (\xi,\eta) \in V\times V$ by the identity
$$
B\left(e^{i(x  \xi+y  \zeta)}\right)_{x=y}=b(x, \xi, \zeta) e^{i(x  \xi+x  \zeta)}
$$
If $B$ is a bi-differential operator with constant coefficients, its symbol does not depend on $x$ and is simply denoted by $b(\xi,\zeta)$.

By repeated applications of Proposition \ref{defcst}, for each $k\in \mathbb N$, there exists a polynomial $c_{s,t}^{(k)}(\xi, \zeta)$ such that
\begin{equation}\label{Rodrigues}
 {\bf det} \left(\frac{\partial}{\partial  \xi} - \frac{\partial}{\partial \zeta} \right)^k {\bf det}( \xi)^{\lambda+k}  {\bf det} ( \zeta)^{\mu+k} = c^{(k)}_{\lambda,\mu}( \xi,\zeta){\bf det} ( \xi)^\lambda {\bf det}( \zeta)^\mu\ .
\end{equation}

\begin{theorem} The symbol of the bi-differential operator $B_{\lambda, \mu}^{(k)}$ is given by
\begin{equation}
b_{\lambda,\mu}^{(k)}(\xi, \zeta) = c_{\lambda-\frac{n}{r},\, \mu-\frac{n}{r}}^{(k)}(\xi, \zeta)\ .
\end{equation}
\end{theorem}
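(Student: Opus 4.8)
The plan is to compute the symbol on the Fourier transform side, where $B_{\lambda,\mu}^{(k)}=\operatorname{res}\circ F_{\lambda,\mu}^{(k)}$ becomes transparent once one feeds in the iterated form of the identity \eqref{Fourierversion}.

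First I would record the Fourier-side description of $F_{\lambda,\mu}^{(k)}$. As in the discussion preceding Theorem \ref{Dst}, conjugating by the Fourier transform $\mathcal F$ on $V\times V$ turns $M$ into (up to normalization) ${\bf det}\!\big(\tfrac{\partial}{\partial\xi}-\tfrac{\partial}{\partial\zeta}\big)$, and turns a Knapp–Stein operator $I_{\lambda,\varepsilon}$ into multiplication by a constant times ${\bf det}(\xi)^{\frac{n}{r}-\lambda,\varepsilon}$ — its kernel ${\bf det}(x)^{-\frac{2n}{r}+\lambda,\varepsilon}$ has, by the functional equation of Section 3, Fourier transform proportional to ${\bf det}(\xi)^{\frac{n}{r}-\lambda,\varepsilon}$. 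Matching this against \eqref{Fourierversion} is exactly the change of parameters $s=\tfrac{n}{r}-\lambda$, $t=\tfrac{n}{r}-\mu$, and it gives, up to a meromorphic factor in $(\lambda,\mu)$,
$$\mathcal F\circ F_{\lambda,\mu}\circ\mathcal F^{-1}=\kappa(\lambda,\mu)\,D_{s,t},\qquad\text{hence}\qquad \mathcal F\circ F_{\lambda,\mu}^{(k)}\circ\mathcal F^{-1}=\kappa^{(k)}(\lambda,\mu)\,D_{s-k+1,t-k+1}\circ\cdots\circ D_{s,t},$$
with $s=\tfrac{n}{r}-\lambda$, $t=\tfrac{n}{r}-\mu$ (recall $F_{\lambda,\mu}^{(k)}=F_{\lambda+k-1,\mu+k-1}\circ\cdots\circ F_{\lambda,\mu}$, and $s(\lambda+j)=s-j$).

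Next I would iterate \eqref{Fourierversion}. Solving it for $D_{s,t}$ and composing $k$ copies, the inner power of ${\bf det}$ of one factor and the outer power of ${\bf det}$ of the next multiply to ${\bf det}(\xi)^{0,+}={\bf det}(\zeta)^{0,+}=1$; here the sign flips $\varepsilon\mapsto-\varepsilon$, $\eta\mapsto-\eta$ occurring at each step of the iteration are precisely what makes the $\{\pm\}$–parities pair up, so that no extra factor is produced. One is left with the factorization
$$D_{s-k+1,t-k+1}\circ\cdots\circ D_{s,t}={\bf det}(\xi)^{k-s,\,\cdot}{\bf det}(\zeta)^{k-t,\,\cdot}\ \circ\ {\bf det}\!\Big(\tfrac{\partial}{\partial\xi}-\tfrac{\partial}{\partial\zeta}\Big)^{\!k}\ \circ\ {\bf det}(\xi)^{s,\,\cdot}{\bf det}(\zeta)^{t,\,\cdot},$$
which is the operator form of the Rodrigues-type identity \eqref{Rodrigues}.

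Finally I would compute the symbol. By definition, $b_{\lambda,\mu}^{(k)}(x,\xi,\zeta)\,e^{ix(\xi+\zeta)}=\big(F_{\lambda,\mu}^{(k)}e^{i(\,\cdot\,\xi+\,\cdot\,\zeta)}\big)(x,x)$; writing $e^{i(y_{1}\xi+y_{2}\zeta)}$ as a multiple of the inverse Fourier transform of a Dirac mass and using the description above, the restriction to the diagonal $y_1=y_2=x$ turns the pairing with that Dirac mass into the evaluation at $(\xi,\zeta)$ of $\,{}^{t}D^{(k)}\big(e^{ix(\,\cdot\,+\,\cdot\,)}\big)$, where ${}^{t}D^{(k)}$ is the transpose for the bilinear pairing. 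Transposing the factorization above merely interchanges $s$ with $k-s$ (the transpose of the constant-coefficient operator ${\bf det}(\partial_\xi-\partial_\zeta)^{k}$ is $\pm$ itself). Now the crucial remark is that $\big(\tfrac{\partial}{\partial\xi_j}-\tfrac{\partial}{\partial\zeta_j}\big)e^{ix(\xi+\zeta)}=0$, so ${\bf det}(\partial_\xi-\partial_\zeta)^{k}$ passes through the exponential and acts only on the factor ${\bf det}(\xi)^{k-s}{\bf det}(\zeta)^{k-t}$; by \eqref{Rodrigues} this yields $c_{-s,-t}^{(k)}(\xi,\zeta)\,{\bf det}(\xi)^{-s}{\bf det}(\zeta)^{-t}$, and the remaining ${\bf det}$–powers cancel, leaving $c_{-s,-t}^{(k)}(\xi,\zeta)\,e^{ix(\xi+\zeta)}$. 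Hence $b_{\lambda,\mu}^{(k)}(x,\xi,\zeta)$ does not depend on $x$ and is proportional to $c_{-s,-t}^{(k)}(\xi,\zeta)=c_{\lambda-\frac{n}{r},\,\mu-\frac{n}{r}}^{(k)}(\xi,\zeta)$; since both sides are polynomials the identity, valid a priori on $\{{\bf det}(\xi){\bf det}(\zeta)\neq 0\}$, holds everywhere.

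The step I expect to be the actual work is verifying that the proportionality constant is exactly $1$: one must keep track of the Fourier normalization of $M$, of the constants $c(s,\varepsilon)$ coming from the functional equation of Section 3 when transforming the Knapp–Stein kernels, of the meromorphic factors $\kappa,\kappa^{(k)}$, and of the sign $(-1)^{rk}$ in the transpose of ${\bf det}(\partial_\xi-\partial_\zeta)^{k}$, and show that their product is trivial for $F_{\lambda,\mu}$ taken in the normalization of Theorem \ref{Flambdamu}. The cleanest way is to pin the normalization down in the single case $k=1$ — where $B_{\lambda,\mu}^{(1)}=\operatorname{res}\circ F_{\lambda,\mu}$ is explicit (cf. the $V=\mathbb{R}^{p,q}$ example) and can be matched termwise with $c_{\lambda-\frac{n}{r},\,\mu-\frac{n}{r}}$ of Proposition \ref{defcst} — and then to propagate it to all $k$ using that everything depends polynomially on $(\lambda,\mu)$.
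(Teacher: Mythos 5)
Your argument is correct in outline, but it reaches the conclusion by a genuinely different route from the paper. The paper's proof is inductive in $k$: it writes $B^{(k)}_{\lambda,\mu}=B^{(k-1)}_{\lambda+1,\mu+1}\circ F_{\lambda,\mu}$, invokes the symbol composition formula $b^{(k)}_{\lambda,\mu}=b^{(k-1)}_{\lambda+1,\mu+1}\#f_{\lambda,\mu}=D_{\lambda-\frac{n}{r}+1,\mu-\frac{n}{r}+1}\big(b^{(k-1)}_{\lambda+1,\mu+1}\big)$ (quoting the proof of Proposition 3.4 of \cite{Clerc-Rod} for this single-step identity), converts it into the recurrence
${\bf det}(\xi)^{\lambda-\frac{n}{r}}{\bf det}(\zeta)^{\mu-\frac{n}{r}}b^{(k)}_{\lambda,\mu}={\bf det}\big(\tfrac{\partial}{\partial\xi}-\tfrac{\partial}{\partial\zeta}\big)\big({\bf det}(\xi)^{\lambda-\frac{n}{r}+1}{\bf det}(\zeta)^{\mu-\frac{n}{r}+1}b^{(k-1)}_{\lambda+1,\mu+1}\big)$, and observes that $c^{(k)}_{\lambda-\frac{n}{r},\mu-\frac{n}{r}}$ satisfies the same recurrence with the same value $1$ at $k=0$. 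You instead do a one-shot computation: iterate \eqref{Fourierversion} to factor the Fourier-side operator as ${\bf det}(\xi)^{k-s}{\bf det}(\zeta)^{k-t}\circ{\bf det}(\partial_\xi-\partial_\zeta)^k\circ{\bf det}(\xi)^{s}{\bf det}(\zeta)^{t}$, pair with a Dirac mass, transpose, and use the (correct) observation that each $\partial_{\xi_j}-\partial_{\zeta_j}$ annihilates $e^{i\langle x,\xi+\zeta\rangle}$ so that ${\bf det}(\partial_\xi-\partial_\zeta)^k$ passes through the exponential and produces $c^{(k)}_{-s,-t}$ via \eqref{Rodrigues}. In effect you are rederiving from scratch the symbol-composition step that the paper imports from \cite{Clerc-Rod}, which is a real gain in self-containedness; the price is exactly the one you flag, namely tracking the accumulated constants (Fourier normalizations, the Knapp--Stein functional-equation constants $c(s,\varepsilon)$, the factors $\kappa^{(k)}$, the transposition sign $(-1)^{rk}$) to show the proportionality constant is $1$ in the normalization of Theorem \ref{Flambdamu}. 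The paper's inductive formulation sidesteps all of this by anchoring at the common initial condition $b^{(0)}=c^{(0)}=1$, so your suggested fix --- pin the constant down at $k=1$ and propagate --- is in fact the natural way to reconcile the two arguments; with that step carried out, your proof is complete. One further point to be careful about if you write this up in full generality: for type I algebras the functional equations of Section 3 mix $Z_+$ and $Z_-$, so the statement that a Knapp--Stein operator becomes a single multiplication ${\bf det}(\xi)^{\frac{n}{r}-\lambda,\varepsilon}$ on the Fourier side needs the same care the paper takes in passing from \eqref{weakdef} to \eqref{Fourierversion}; this does not affect the final symbol, which is independent of $(\varepsilon,\eta)$, but it does affect the intermediate operator identities.
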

\begin{proof}
It is possible to compose a differential operator $F$ on $V\times V$ followed by a bi-differential operator $B$ on $V\times V$, yielding a bi-differential operator $B\circ F$ on $V\times V$. There is a corresponding composition formula expressing the symbol $b(x,\xi,\eta)\# f(x,y,\xi,\zeta)$ of $B\circ F$, in terms of the symbols of $B$ and $F$, similar to the classical composition formula for symbols of differential operators. Now observe that
\begin{eqnarray*}
B_{\lambda, \mu}^{(k)}&=&\operatorname{res} \circ\left(F_{(\lambda+k-1,\, \mu+k-1)} \circ \cdots \circ F_{\lambda+1,\, \mu+1}\right) \circ F_{\lambda,\, \mu} \\
&=&\operatorname{res} \circ F_{\lambda+1,\, \mu+1}^{(k-1)} \circ F_{\lambda, \,\mu}\\
&=& B_{\lambda+1,\mu+1}^{(k-1)} \circ F_{\lambda,\mu}
\end{eqnarray*}
which implies in terms of symbols
\begin{eqnarray*}
b_{\lambda,\, \mu}^{(k)}(\xi, \zeta) &=&\left(b_{\lambda+1, \mu+1}^{(k-1)} \# f_{\lambda, \,\mu}\right)(\xi, \zeta)\\
&=&D_{\lambda-\frac{n}{r}+1,\,  \mu-\frac{n}{r}+1}\left(b_{\lambda+1,\, \mu+1}^{(k-1)}\right)(\xi, \zeta),
\end{eqnarray*}
see the proof of Proposition 3.4 in \cite{Clerc-Rod} for the last equality. Hence
$$
\begin{aligned}
&\  {\bf det}(  \xi)^{\lambda-\frac{n}{r}}  {\bf det}( \zeta)^{\mu-\frac{n}{r}} \,b^{(k)}_{\lambda, \mu}(\xi, \zeta)\\
&={\bf det}( \xi)^{\lambda-\frac{n}{r}} {\bf det}( \zeta)^{\mu-\frac{n}{r}} \,\big(D_{\lambda-\frac{n}{r}+1,\, \mu-\frac{n}{r}+1}\, b_{\lambda+1,\, \mu+1}^{(k-1)}\big)(\xi, \eta) \\
&={\bf det} \left(\frac{\partial}{\partial \xi}-\frac{\partial}{\partial \zeta}\right) \left({\bf det}( \xi)^{\lambda-\frac{n}{r}+1} {\bf det}( \zeta)^{\mu-\frac{n}{r}+1}b_{\lambda+1, \mu+1}^{(k-1)}(\xi, \zeta)\right)  ,
\end{aligned}
$$
and this is how we come up with the following result.

\begin{lemma}
 The symbols $b_{\lambda, \mu}^{(k)}$ satisfy the following recurrence relation
\begin{equation}
\begin{aligned}
&{\bf det}(  \xi)^{\lambda-\frac{n}{r}}  {\bf det}( \zeta)^{\mu-\frac{n}{r}} b_{\lambda, \mu}^{(k)}(\xi, \zeta)\\
&={\bf det} \left(\frac{\partial}{\partial \xi}-\frac{\partial}{\partial \zeta}\right)
\left({\bf det}( \xi)^{\lambda-\frac{n}{r}+1} {\bf det}(\zeta)^{\mu-\frac{n}{r}+1} \,b_{\lambda+1, \,\mu+1}^{(k-1)}(\xi, \zeta) \right)
\end{aligned}
\end{equation}
\end{lemma}

To conclude, it remains to calculate the solution of the recurrence relation. 
Now the family $c^{(k)}_{\lambda, \mu}$ satisfies the recurrence relation
\begin{eqnarray*}
&&c_{\lambda, \mu}(\xi,\zeta) {\bf det}(\xi)^\lambda {\bf det} (\zeta)^\mu \\
&=&{\bf det} \left(\frac{\partial}{\partial \xi}- \frac{\partial}{\partial \zeta}\right)\left ({\bf det} \left(\frac{\partial}{\partial \xi}- \frac{\partial}{\partial \zeta}\right)^{k-1}
{\bf det}( \xi)^{(\lambda+1)+(k-1)} {\bf det}( \zeta)^{(\mu+1)+(k-1)}\right)\\
 &=&{\bf det} \left(\frac{\partial}{\partial  \xi} - \frac{\partial}{\partial \zeta} \right) \left(c_{\lambda+1,\mu+1}^{(k-1)}(\xi, \zeta){\bf det}( \xi)^{\lambda+1}{\bf det}( \zeta)^{\mu+1}\right)\ .
\end{eqnarray*}
Hence the family $c^{(k)}_{\lambda-\frac{n}{r},\, \mu-\frac{n}{r}}$ satisfies the same recurrence relation as the family $b_{\lambda,\, \mu}^{(k)}$ and are both equal to $1$ for $k=0$ and hence, they coincide for every $k\in \mathbb N$. 
\end{proof}

\begin{remark}  The  definition of the polynomials $c_{\lambda, \mu}^{(k)}$ reminds of the definition of the Jacobi polynomials using the \emph{Rodrigues formula}, see \cite{c23} Section 5 for more details.

\end{remark}

\section{Other applications}
Below we shall  assure that our new approach for constructing symmetry breaking differential operators   is efficient to produce new examples of such operators in many different geometric contexts. \\
 
 \noindent 
 {\bf The case of differential forms.} %In \cite{bck2} we were able to assure that our new approach of constructing symmetry breaking differential operators   is efficient to produce new examples of such operators in many different contexts.  
In  \cite{bck2} we were able to built bi-differential operators acting on spaces of differential forms which are covariant for the Lie group $G={\rm SO}_0(1,n+1),$ the conformal group of $\mathbb R^n. $ Below we will briefly summarize the main results in \cite{bck2}. All the detailed proofs can be found in [loc.cit.].

Let  $ \mathbb R^{1,n+1}$  be the $n+2$-dimensional real vector space equipped with the Lorentzian quadratic form $$[{\bf x},{\bf x}] = x_0^2-(x_1^2+\dots+x_{n+1}^2),\qquad {\bf x}=(x_0,x_1,\ldots, x_{n+1}).$$ Let $G={\rm SO}_0(1,n+1)$ be the connected component of the identity in the group of isometries for the Lorentzian form on $\mathbb R^{1,n+1}$.   The action of $G$ on the unit sphere $S^n$  of $\mathbb R^{n+1}$ is given as follows: For $x'=(x'_1, \ldots, x'_{n+1})\in S^n$ and $g\in G,$ observe that the component $(g(1,x'))_0>0$, which allows to define  the element $g(x')\in S^n$ by  $$(1, g(x'))= (g(1,x'))_0^{-1}\, g(1,x').$$  By  the inverse map of the stereographic projection, the action of $G$ on $S^n$ can be transferred to a  rational   action (not everywhere defined) on $\mathbb R^n$, for which we still use the notation $G\times \mathbb R^n\ni(g,x) \longmapsto g(x)\in \mathbb R^n$.

Below we shall use the following standard notation for the Iwasawa decomposition of $G,$ 
$$G\ni g = \overline n(g) m(g) a_{t(g)} n(g)\in \overline N \times M\times A \times N.$$ We will identify the elements $  n_x\in N$    with $x\in \mathbb R^n,$ and,  for $\lambda\in \mathbb C,$ we write  $a_{t(g)}^\lambda=e^{\lambda t(g)}.$ 

 For $0\leq k\leq n,$ we denote by $\Lambda^k$ the vector space of complex-valued  alternating multilinear $k$-forms on $\mathbb R^n.$  Let $\mathcal E^k(\mathbb R^n)=C^\infty(\mathbb R^n, \Lambda^k)$ be the space of smooth complex-valued differential $k$-forms on   $\mathbb R^n$.   For  $g\in G$ and $\omega \in \mathcal E^k(\mathbb R^n)$ let 
\begin{equation}\label{spin}  \pi_\lambda^k(g) \omega (x) = e^{-(\lambda+k)t(g^{-1}\overline n_x)}\sigma_k \big(m(g^{-1}\overline n_x)\big)^{-1} \omega(g^{-1}(x) ),\end{equation}
where $\sigma_k$ is the   representation of $M={\rm SO}(n)$ on  $\Lambda^k.$ That is,
\begin{equation}\label{piPS} 
  \pi_\lambda^k\simeq {\rm Ind}_{MAN}^G (\sigma_k\otimes \chi_{\lambda+k}\otimes 1),
\end{equation}
 where, for $\lambda\in\mathbb{C}$, we denote by $\chi_\lambda$ the character of $A$  given by $\chi_\lambda(a_t)=e^{\lambda t}$.

In the present framework, the {Knapp-Stein intertwining  operators} take the form $$ I^k_\lambda \omega( x) = \int_{\mathbb R^n} \mathcal R_{-2n+2\lambda}^k(x-y)\, \omega(y) dy,\qquad \omega\in \mathcal E^k(\mathbb R^n)$$ 
where $\mathcal R_{-2n+2\lambda}^k$ is the tempered distribution defined by 
  \begin{equation}\label{riezf2} 
  \langle \mathcal R_{s}^k,  \omega\rangle    =   \pi^{-{n\over 2}}  2^{-{s}-n+1} {{\Gamma\left(-{{s}\over 2}+1\right)}\over {\Gamma\left({{{s}+n}\over 2}\right)}}      \int_{\mathbb R^n}    \Vert x\Vert ^{s-2} ({\boldsymbol \iota}_{x} {\boldsymbol \varepsilon}_{x} -{\boldsymbol \varepsilon}_{x} {\boldsymbol \iota}_{x}) \omega(x) dx,
  \end{equation} 
  with ${\boldsymbol \iota}_{x}$ and ${\boldsymbol \varepsilon}_{x}$ are, respectively,  the interior and the exterior products of $k$-forms.
 For $-n<\Rel {s} <0$,  $ \mathcal R_{s}^k$  defines a tempered distribution depending holomorphically on $s.$ By \cite[\S 3.2]{fo},    $  \mathcal R_{s}^k$ can be analytically  continued to $\mathbb C$, giving a meromorphic family of tempered distributions.  When  $k=0,$ the identity 
  ${\boldsymbol \iota}_{x} {\boldsymbol \varepsilon}_{x} +{\boldsymbol \varepsilon}_{x} {\boldsymbol \iota}_{x}=\Vert x\Vert^ 2\Id_\mathcal E  $ implies immediately that $\mathcal R_{s}^0$ is nothing but the   classical Riesz distribution, up to the multiplication by $-s$.   In particular, we have
  \begin{equation*}\label{covI}
  I^k_\lambda \circ   \pi^k_\lambda(g) =  \pi_{n-\lambda}^k(g)\circ   I^k_\lambda\ .
\end{equation*}

  For $0\leq k,\ell\leq n,$  let $\mathcal E^{k,\ell}(\mathbb R^n\times \mathbb R^n)=C^\infty (\mathbb R^n\times \mathbb R^n, \Lambda^k\otimes \Lambda^\ell)  $ be the space of differential forms of bidegree $(k,\ell).$  The source operator $F^{k,\ell}_{\lambda, \mu}$ defined on   $\mathcal E^{k,\ell}(\mathbb R^n\times \mathbb R^n)$ and satisfies  
\begin{equation*} 
F^{k,\ell}_{\lambda, \mu} \circ \big(   \pi^{k}_\lambda \otimes   \pi^{\ell}_\mu\big)(g)  \omega = \big(  \pi_{\lambda+1}^{k} \otimes   \pi_{\mu+1}^{\ell}\big)(g)\circ F_{\lambda, \mu}^{k,\ell}\,\omega
\end{equation*}
 is  given explicitly  by 
\begin{eqnarray*}\label{F-lambda-mu-kl2}
  F_{\lambda,\mu}^{k,\ell}&=&    16\Bigl\{\Vert x-y\Vert^2        \widetilde \boxvoid_{k,\lambda}\otimes  \widetilde \boxvoid_{\ell,\mu} \nonumber\\ 
  &&- 2 \sum_{j=1}^n (x_j-y_j)   \left( (2\lambda-n+2)  \widetilde \nabla_{k,\lambda,j}   \otimes  \widetilde \boxvoid_{\ell,\mu}
  -    (2\mu-n+2)  \widetilde \boxvoid_{k,\lambda} \otimes     \widetilde\nabla_{\ell,\mu,j}  \right) \nonumber\\ 
  &&- 2 (2\lambda-n+2)(2\mu-n+2)  \sum_{j=1}^n     \widetilde \nabla_{k,\lambda,j}   \otimes      \widetilde \nabla_{\ell,\mu,j}\nonumber \\
  &&-2   (2\mu-n+2)(\mu+1)(\mu-\ell)(\mu-n+\ell)     \widetilde \boxvoid_{k,\lambda} \otimes  \Id_{{\mathcal E}^\ell}\nonumber\\
&&  -2    (2\lambda-n+2)(\lambda+1)(\lambda-k)(\lambda-n+k)      \Id_{{\mathcal E}^k} \otimes  \widetilde \boxvoid_{\ell,\mu}\Bigr\},\nonumber
 \end{eqnarray*}
 where
 \begin{eqnarray*}
   \widetilde \boxvoid_{p,q}&=&(q-n+p)(q-p+1) \cdd\dd+(q-n+p+1) (q-p) \dd\cdd \\
  \widetilde  \nabla_{p,q,j} &=&(q-n+p)(q-p+1)\partial_{x_j}-(q-p)\boldsymbol \varepsilon_{e_j}\cdd-(q-n+p)\cdd \boldsymbol \iota_{e_j} .
 \end{eqnarray*}
 Here $\dd$ is the exterior differential    ${\boldsymbol d}: \mathcal E^k(\mathbb R^n)   \longrightarrow  \mathcal E^{k+1} (\mathbb R^n)$  defined   by
 ${\boldsymbol d}    =\sum_{j=1}^n {\boldsymbol \varepsilon}_{e_j} \partial_{e_j},$ while ${\boldsymbol   \delta} $ 
is  the  co-differential  ${\boldsymbol   \delta} :    \mathcal E^{k+1}  (\mathbb R^n) \longrightarrow  \mathcal E^k(\mathbb R^n) $ given  by 
 ${\boldsymbol   \delta}  = - \sum_{j=1}^n   {\boldsymbol \iota}_{e_j}      \partial_{e_j} .$
More generally, for any integer $m\geq 1$, the operator 
$$ F^{k,\ell}_{\lambda, \mu;m} := F^{k,\ell}_{\lambda+m-1, \mu+m-1}\circ \dots \circ F^{k,\ell}_{\lambda+1, \mu+1}\circ F^{k,\ell}_{\lambda, \mu}$$
 intertwines the representations $  \pi^{k}_\lambda\otimes   \pi^{\ell}_\mu$ and $  \pi^{k}_{\lambda+m}\otimes   \pi^{\ell}_{\mu+m}$.

Let $\res : \mathcal E^{k,\ell} ( \mathbb R^n\times \mathbb R^n)\longrightarrow  C^\infty ( \mathbb R^n,  \Lambda^k\otimes \Lambda^\ell   ) $ be the restriction map defined by
$$(\res \omega) (x) =  \omega(x,x).$$
 The map $\res$ intertwines the representations $\pi_\lambda^{k}\otimes  \pi_\mu^{\ell}$ and $\Ind_{MAN}^G \big((\sigma_k\otimes \sigma_\ell)\otimes \chi_{\lambda+\mu+k+\ell}\otimes 1\big)$. However, as a representation of $M ={\rm SO}(n)$, the tensor product  $\sigma_k\otimes \sigma_\ell$ is in general not irreducible. Let $\Gamma $ be a minimal invariant subspace of 
$\Lambda^k\otimes \Lambda^\ell $ under the action of ${\rm SO}(n)$. Let $\sigma_\Gamma$ be the corresponding irreducible representation of ${\rm SO}(n)$ on $\Gamma, $ and let
$p_\Gamma$ be the orthogonal projection on $\Gamma $.   Then, the map $\res_\Gamma:=p_\Gamma \circ\res$ intertwines the representations $\pi_\lambda^{k}\otimes  \pi_\mu^{\ell}$ and $\Ind_P^G \big(\sigma_\Gamma\otimes \chi_{\lambda+\mu+k+\ell}\otimes 1\big)$. 

Finally, from putting all pieces together,  the  operator  $ B_{\lambda, \mu;m}^{k,\ell;\Gamma}: \mathcal E^{k,\ell} ( \mathbb R^n\times \mathbb R^n)\longrightarrow   
C^\infty(\mathbb R^n, \Gamma )$  defined  by
$$
B_{\lambda, \mu;m}^{k,\ell;\Gamma} := \res_\Gamma \circ\, F_{\lambda, \mu;m}^{k,\ell}\,,
$$ is a bi-differential operator and covariant with respect to  $\pi_\lambda^{k}\otimes  \pi_\mu^{\ell}$ and $\Ind_P^G (\sigma_\Gamma\otimes \chi_{\lambda+\mu+k+\ell+2m}\otimes 1)$.

In some cases it is possible to give an explicit  expression for $B_{\lambda, \mu;m}^{k,\ell;\Gamma}.$ For instance,  if  $0\leq k+\ell\leq n$, then the representation $\Lambda^{k+\ell}$ appears in the decomposition of the tensor product $\Lambda^k \otimes \Lambda^\ell $ with multiplicity one, and the projection (up to a normalization factor) is given by
$$p_{\Lambda^{k+\ell}}(\omega\otimes \eta) = \omega\wedge \eta.$$
Thus, for $m=1$, the bi-differential operator $B_{\lambda, \mu;1}^{k,\ell;\Lambda^{k+\ell}}$ is given by
 \begin{eqnarray*}
  B_{\lambda, \mu;1}^{k,\ell;\Lambda^{k+\ell}}(\omega\otimes \eta) (x) &=&     
  - 32 \Bigl\{ (2\mu-n+2)(\mu+1)(\mu-\ell)(\mu-n+\ell)     \widetilde \boxvoid_{k,\lambda}\omega(x)\wedge\eta(x)    \nonumber\\
  &&+(2\lambda-n+2)(2\mu-n+2)  \sum_{j=1}^n     \widetilde \nabla_{k,\lambda,j}\omega(x)\wedge       \widetilde \nabla_{\ell,\mu,j}\eta(x)  \nonumber\\ 
&&  +  (2\lambda-n+2)(\lambda+1)(\lambda-k)(\lambda-n+k)     \omega(x) \wedge \widetilde \boxvoid_{\ell,\mu}\eta(x) \Big\}
 \end{eqnarray*}
 If in addition $k=\ell=0,$ i.e. $\omega,\eta \in C^\infty(\mathbb R^n),$ then 
\begin{multline*}
\shoveright{B_{\lambda, \mu;1}^{0,0;\mathbb C} (\omega\otimes \eta) (x) =    
-64(\lambda+1)(\lambda-n)(\mu+1)(\mu-n)  \Big\{\mu (\mu-\frac{n}{2}+1)     \left(\Delta\left(\partial_x\right) \omega\right)(x) \eta(x)}\\
\shoveright{+2(\lambda-\frac{n}{2}+1) (\mu-\frac{n}{2}+1) \sum_{j=1}^n \left(\partial_{x_j}\omega\right)(x)
 \left(\partial_{x_j}\eta\right)(x) }\\
+  \lambda (\lambda-\frac{n}{2}+1)    \omega(x)  \left(\Delta\left(\partial_x\right) \eta\right)(x)    \Big\},
\end{multline*}
which coincides, up to a scalar factor,  with the  multidimensional Rankin-Cohen operators   in Section 5. \\

\noindent
 {\bf The case of spinors.} The construction of covariant bi-differential operators acting on the spinor bundle has been developed in \cite{ck}. We will give below a brief  overview of the main results. For more details   see \cite{ck}.

The  conformal spin group ${G}=\operatorname{Spin}_0(1, n+1)$, a double cover of the conformal group $G=\mathrm{SO}_0(1, n+1)$,   acts by conformal rational transformations on $\mathbb{R}^n$. This action gives rise to   the principal series representation  $\pi_{\rho , \lambda}=\mathrm{Ind}_{  P}^{  G}(\rho\otimes\chi_\lambda\otimes 1)$,  parameterized by a spinor representation $\left(\rho,\mathbb S\right)$ of $ {M}=\operatorname{Spin}(n)$ and a character $\chi_\lambda$ of $A$ for $\lambda\in\mathfrak a_{\mathbb C}\simeq\mathbb C$. Above $ P=  M A N$ is a maximal parabolic subgroup of $ G$ where $A\simeq\mathbb R$, $N\simeq\mathbb R^n$ and $  M=\operatorname{Spin}(n)$ is  the centralizer of $A$ in the maximal compact subgroup $  K=\operatorname{Spin}(n+1)$ of $ G$. Let $ M'$ be the normalizer of $A$ in $ K$. Then the Weyl group $  M'/  M$ has two elements. As a representative of the non-trivial Weyl group element choose $w= e_1e_{n+1}$, where  $\{e_j\}_{j=0}^{n+1}$ is the standard basis of $\mathbb R^{1,n+1}$.

The non-compact realization of the representation $\pi_{\rho, \lambda}$, which will be used,  turns out to be more  appropriate for Clifford algebra calculus. It is given by
\[\pi_{\rho,\lambda} (g) f (\overline n) = {\chi_\lambda\big(a(g^{-1}\overline n)\big)^{-1}} \rho\big(m(g^{-1}\overline n)\big)^{-1} f\left(g^{-1}(\overline n)\right)\!,
\]
where $f$ is a smooth $\mathbb S$-valued  function on $\overline N\simeq\mathbb R^n$.

The Knapp-Stein operators  $J_{ \rho,\lambda}$
%\[J_{ \rho,\lambda} F(\overline n_x) = \int_{\mathbb R^n} {\rm e}^{-( 2n-2\lambda)\ln a(w^{-1} \overline n_y)}  \rho(m(w^{-1} \overline n_y)
%) F(\overline n_{x+y})\,{\rm d}y ,
%\]
are intertwining operators between $\pi_{ \rho, \lambda}$ and $\pi_{w \rho, n-\lambda}$.  However,  $w \rho$ and $ \rho$ equivalent (namely $ \rho(-e_1)\circ w \rho(m) =   \rho(m)\circ  \rho(-e_1) $), thus we let
 \[I_{ \rho, \lambda} = \rho(-e_1)\circ J_{ \rho, \lambda} . \]
which in fact are intertwining operators between $\pi_{\rho,\lambda}$ and $\pi_{\rho,n-\lambda}$. 
%Using calculus in the Clifford algebra, see \cite[Proposition 3.1]{ck}, the operator $I_{\rho,\lambda}$ can be given a a convolution operator on $\overline N=\mathbb R^n$,
%\begin{equation*}\label{KS}
%I_{\rho,\lambda} F( \overline n_x) = \int_{\mathbb R^n} \vert y\vert^{-2n+2\lambda} \rho\left(\frac{y}{\vert y \vert}\right) F(\overline n_{x-y})\,{\rm d}y ,
%\end{equation*}

Now consider simultaneously  a spinor representation $(\rho, \mathbb S)$  and its dual $(\rho', \mathbb S')$. Consider for $\lambda, \mu \in \mathbb C$, the corresponding induced representations are $\pi_{\rho,\lambda}$ and $\pi_{\rho',\mu}$.   
The tensor product $\pi_{\rho,\lambda} \otimes \pi_{\rho',\mu}$ is a representation of ${G}$ by the diagonal action. This representation is not irreducible and one is interested in constructing  symmetry breaking differential operators    $\pi_{\rho ; \lambda} \otimes \pi_{\rho^{\prime} ; \mu} \rightarrow \pi$ into irreducible representations $\pi$ of ${G}$, for instance into another principal series representation $\pi=\pi_{\tau^* , \nu}$ where   $\tau_k^*$ is an exterior power representation of $ M=\mathrm{Spin}(n)$ on $\Lambda^*_k\left(\mathbb{C}^n\right)$.  

%\[\pi_\lambda = \Ind_{\bf P}^{\bf G} \rho\otimes\chi_\lambda\otimes 1, \quad \pi'_\mu = \Ind_{\bf P}^{\bf G} \rho'\otimes \chi_\mu\otimes 1 .\]
%Above $\rho'$ is the contragredient representation of $\rho$.

 To this aim, we consider for $s, t\in\mathbb C$,   the following  {\it Clifford-Riesz operators} $\fsl{\mathcal R}_s$ and $\fsl{\mathcal R}'_t$  given by
$$
\fsl{\mathcal R}_s f=\int_{\mathbb R^n} \fsl{r}_s(x-y) f(y)dy,\qquad
\fsl{\mathcal R}'_t f=\int_{\mathbb R^n} \fsl{r}'_t(x-y) f(y)dy$$
where 
$\fsl{r}$ and $\fsl{r}'$ are the {\it Clifford-Riesz distributions} 
%Let $(\mathbb S, \rho)$ be a Clifford module and for $s\in \mathbb C$ define the associated \emph{ }   by
\begin{equation}\label{CliffordRiesz}
\fsl{r}_s(x) = \vert x\vert^s \rho\left(\frac{x}{\vert x\vert}\right),\qquad
\fsl{r}'_t(x) = \vert x\vert^t \rho'\left(\frac{x}{\vert x\vert}\right),\;
\end{equation}
associated  respectively to the Clifford modules $(\mathbb S,\rho)$ and $(\mathbb S',\rho')$.    

The main observation in this framework is that up to a shift in the parameters, and up to a constant multiple, the Knapp-Stein operators  for  $\pi_{\rho,\lambda}$ and $\pi_{\rho',\mu}$ are essentially the Clifford-Riesz operators. 
More precisely,  
 \[ I_{\rho,\lambda} = \fsl{\mathcal R}_{2\lambda-2n},\qquad  I_{\rho',\mu} = \fsl{\mathcal R}'_{2\mu-2n}.
 \]
Further, let %the operator
%\[\fsl{\mathcal{M}}_{s,t} \colon \mathcal S(\mathbb R^n\times \mathbb R^n, \mathbb S\otimes \mathbb S')\to \mathcal S'(\mathbb R^n\times \mathbb R^n, \mathbb S\otimes \mathbb S')
%,\quad \fsl{\mathcal{M}}_{s,t} = \big(\fsl{\mathcal R}_s\otimes \fsl{\mathcal R}'_t\big)\circ \mathcal M . 
%\]
% where 
 $\mathcal M$ to be the operator on $C^\infty(\mathbb R^n\times \mathbb R^n, \mathbb S\otimes\mathbb S')$ defined  for $f$ a smooth function on $\mathbb R^n\times \mathbb R^n$ with  values in $\mathbb S\otimes \mathbb S'$ by
\begin{equation*}
 {M} f(x,y) = \vert x-y\vert^2 f(x,y) .
\end{equation*}
The operator $ M$ intertwines $\pi_{\rho,\lambda}\otimes\pi_{\rho',\mu}$ and $\pi_{\rho,\lambda-1}\otimes\pi_{\rho',\mu-1}$.\\
Applying in this setting the general construction as in section 5, the source operator $F_{\lambda,\mu}$,   which satisfies $F_{\lambda, \mu} \circ\left(\pi_{\rho,\lambda}(g) \otimes \pi_{\rho',\mu}(g)\right)=\left(\pi_{\rho,\lambda+1}(g) \otimes \pi_{\rho',\mu+1}(g)\right) \circ F_{\lambda, \mu}$, is given by 
\begin{eqnarray}\label{Elambdamu}
%\begin{split}
F_{\lambda,\mu} &=& \vert x-y\vert^2 \Delta_x\otimes \Delta_y\nonumber\\
&+&2(2\lambda-n+1) \sum_{j=1}^n (x_j-y_j) \frac{\partial}{\partial x_j} \otimes \Delta_y+ 2(2\mu-n+1) \sum_{j=1}^n (y_j-x_j) \Delta_x\otimes \frac{\partial}{\partial y_j}\nonumber\\
&-&2\rho(x-y)\fsl{D}_x\otimes \Delta_y\ -\ 2 \Delta_x\otimes \rho'(x-y) \fsl{D}_y\nonumber\\
\nonumber\\
&+&(2\mu-n+1)(2\mu+1)  \Delta_x\otimes \id\ +\ (2\lambda-n+1) (2\lambda+1)\id\otimes\Delta_y\\
 &-&2(2\lambda-n+1)(2\mu-n+1) \sum_{j=1}^n \frac{\partial}{\partial x_j} \otimes \frac{\partial}{\partial y_j}\nonumber\\
&+&2(2\lambda-n+1) \sum_{j=1}^n \frac{\partial}{\partial x_j}\otimes \rho'(e_j)\fsl{D}'_y+2(2\mu-n+1)\sum_{j=1}^n \rho(e_j)\fsl{D}_x\otimes \frac{\partial}{\partial y_j}\nonumber \\
&-& 2\big(\sum_{j=1}^n \rho(e_j)\fsl{D}_x\otimes  \rho'(e_j)\fsl{D}'_y\big).\nonumber
%\end{split}
\end{eqnarray}
Above, $\fsl{D}$ and $\fsl{D}'$ are respectively the Dirac operators associated to the Clifford modules $(\rho,\mathbb S)$ and $(\rho',\mathbb S')$

%In the reste of this subsection we will construct symmetry breaking differential operators from $C^\infty(\mathbb R^n\times \mathbb R^,,\mathbb S\otimes \mathbb S')$ into $C^\infty(\mathbb R^n,\Lambda^*_k(\mathbb R^n)\otimes\mathbb C)$  which intertwines the representations $\pi_{\rho, \lambda} \otimes \pi_{\rho^{\prime}, \mu}$ and $\pi_{k, \lambda+\mu+2 m}=\operatorname{Ind}_{\mathbf{P}}^{\mathbf{G}} (\tau_k^* \otimes \chi_\nu \otimes 1)$, where $\tau_k^*$ is the representation of $\operatorname{Spin}(n)$ on the space $\Lambda_k^*\left(\mathbb{R}^n\right) \otimes \mathbb{C}$. 
Let for $1\leq k\leq n$,
$$
\widetilde{\Psi}^{(k)}: C^{\infty}\left(\mathbb{R}^n \times \mathbb{R}^n, \mathbb{S} \otimes \mathbb{S}^{\prime}\right) \rightarrow C^{\infty}\left(\mathbb{R}^n, \Lambda_k^*\left(\mathbb{R}^n\right) \otimes \mathbb{C}\right), \quad f \mapsto\left(\Psi^{(k)} f(x, y)\right)_{\mid x=y}
$$
where
  $\Psi^{(k)}: \mathbb{S} \otimes \mathbb{S}^{\prime} \rightarrow \Lambda_k^*\left(\mathbb{R}^n\right) \otimes \mathbb{C}$ is the usual projection. Then one can prove that $\widetilde{\Psi}^{(k)}$ intertwines 
  $\pi_{\rho, \lambda}\otimes\pi_{\rho',\mu}$ and $\pi_{\tau_k^*,\lambda+\mu}$.

For $m \in \mathbb{N}$, define the operator $F_{\lambda, \mu}^{(m)}: C^{\infty}\left(\mathbb{R}^n \times \mathbb{R}^n, \mathbb{S} \otimes \mathbb{S}^{\prime}\right) \rightarrow C^{\infty}\left(\mathbb{R}^n \times \mathbb{R}^n, \mathbb{S} \otimes \mathbb{S}^{\prime}\right)$ by
$$
F_{\lambda, \mu}^{(m)}=F_{\lambda+m-1, \mu+m-1} \circ \cdots \circ F_{\lambda, \mu} .
$$
It is an intertwining operator of $\pi_{\rho,\lambda} \otimes \pi_{\rho',\mu}$ and $\pi_{\rho,\lambda+m} \otimes \pi_{\rho',\mu+m}$.

Finlay, define  
$$
B_{k ; \lambda, \mu}^{(m)}=\widetilde{\Psi}^{(k)} \circ F_{\lambda, \mu}^{(m)} .
$$
Then the operators $B_{k ; \lambda, \mu}^{(m)}: C^{\infty}\left(\mathbb{R}^n \times \mathbb{R}^n, \mathbb{S} \otimes \mathbb{S}^{\prime}\right) \rightarrow C^{\infty}\left(\mathbb{R}^n, \Lambda_k^*\left(\mathbb{R}^n\right) \otimes \mathbb{C}\right)$ are constant coefficient bi-differential operators and homogeneous of degree $2 m$, and for any $g \in  {G}$, 
$$
B_{k ; \lambda, \mu}^{(m)} \circ\left(\pi_{\rho,\lambda}(g) \otimes \pi_{\rho',\mu}(g)\right)=\pi_{\tau^*_k ; \lambda+\mu+2 m}(g) \circ B_{k ; \lambda, \mu}^{(m)} .
$$

For  $k=0$ and $m=1$,  
the operator
$$
B_{0 ; \lambda, \mu}^{(1)}: C^{\infty}\left(\mathbb{R}^n \times \mathbb{R}^n, \mathbb{S} \otimes \mathbb{S}^{\prime}\right) \rightarrow C^{\infty}\left(\mathbb{R}^n\right)
$$
is given by
$$
\begin{aligned}
B_{0 ; \lambda, \mu}^{(1)}\left(v(\cdot) \otimes w^{\prime}(\cdot)\right)(x) & =(2 \mu-n+1)(2 \mu+1)\left(\Delta v(x), w^{\prime}(x)\right) \\
& +(2 \lambda-n+1)(2 \lambda+1)\left(v(x), \Delta w^{\prime}(x)\right) \\
& -2(2 \lambda-n+1)(2 \mu-n+1) \\
& +\sum_{j=1}^n\left(\frac{\partial}{\partial x_j} v(x), \frac{\partial}{\partial y_j} w^{\prime}(x)\right) \\
& -2(2 \lambda+2 \mu-n+2)\left(\fsl{D} v(x), \fsl{D}^{\prime} w^{\prime}(x)\right) .
\end{aligned}
$$

In particular when the dimension $n=1$, using the realization of $\mathbb S$ as a left ideal in the exterior algebra,  one obtains
$$
B_{0 ; \lambda, \mu}^{(1)}=2 \mu(2 \mu+1) \frac{\partial^2}{\partial x^2}+2 \lambda(2 \lambda+1) \frac{\partial^2}{\partial y^2}-2(2 \lambda+1)(2 \mu+1) \frac{\partial^2}{\partial x \partial y}.
$$
This coincides (up to a constant multiple) to the degree two Rankin-Cohen operator \eqref{RC-intro} for the group  $\mathrm{SL}(2, \mathbb{R})$ which is isomorphic to $\operatorname{Spin}_0(1,2)$. \\ %$[2$, Theorem 10.7] and [13] for more general results in this direction.

\noindent {\bf Other examples.}  Although the focus in this article is on the tensor product cases, the source operator method has been applied succesfully in other geometric contexts. Let us briefly mention these applications. \emph {Juhl operators} (see \cite{j}) are conformal symmetry breaking differential operators corresponding to the restriction from the sphere $S^n$ to $S^{n-1}$, more precisely from the conformal group ${\rm O}(1,n+1)$ to ${\rm O}(1,n)$ acting on densities. A construction of these operators has been presented by the second author in \cite{c17}. In the same context, a similar construction has been worked out for the case of differential forms and for the case of spinors (\cite{fo1}). There is a similar approach in conformal analysis of curved spades, and an alternative construction of the GJMS operators was proposed by B. \O rsted and  A. Juhl along similar lines (see \cite {jo}). It is likely that these ideas can be used in other Cartan geometries.

%\vskip 4cm
%\footnotesize{\noindent Address\\ Jean-Louis Clerc, Universit\'e de Lorraine, CNRS, IECL, F-54000 Nancy, France
%\medskip

%\noindent \texttt{{jean-louis.clerc@univ-lorraine.fr
%}}

\end{document}